\documentclass[12pt,a4paper]{article}

\usepackage{authblk}
\usepackage[margin=2cm]{geometry}
\usepackage{t1enc}
\usepackage[utf8]{inputenc}
\usepackage{amsthm,amsmath,amssymb}
\usepackage{graphicx}
\usepackage{enumerate}
\usepackage{hyperref}
\usepackage{bm}
\usepackage{comment}
\usepackage{amsfonts}
\usepackage{graphicx,caption}
\usepackage{bm}
\usepackage{amsmath, amsthm, amssymb}
\usepackage{graphicx}
\usepackage{hyperref}
\usepackage{relsize}
\usepackage{blkarray}
\usepackage{algpseudocode}

\usepackage{bbm}

\theoremstyle{plain}
\usepackage{amsthm}
\makeatletter
\newcommand{\newreptheorem}[2]{\newtheorem*{rep@#1}{\rep@title}\newenvironment{rep#1}[1]{\def\rep@title{#2 \ref*{##1}}\begin{rep@#1}}{\end{rep@#1}}}
\makeatother

\newtheorem{theorem}{Theorem}
\newtheorem*{theorem-non}{Theorem}
\newtheorem*{non-lemma}{Lemma}
\newtheorem{lemma}[theorem]{Lemma}
\newreptheorem{lemma}{Lemma}

\newtheorem{corr}[theorem]{Corollary}

\newtheorem{conjecture}[theorem]{Conjecture}
\theoremstyle{definition}
\newtheorem{remark}[theorem]{Remark}


\DeclareMathOperator{\sym}{sym}

\DeclareMathOperator{\torsion}{Torsion\,}

\DeclareMathOperator{\mg}{mg}

\DeclareMathOperator{\Aut}{Aut}

\DeclareMathOperator{\gr}{graph\,}

\DeclareMathOperator{\dkl}{D_{KL}}

\begin{document}
\title{Using dense graph limit theory to count cocycles of random simplicial complexes}
\author{Andr\'as M\'esz\'aros}
\date{}
\affil{HUN-REN Alfr\'ed R\'enyi Institute of Mathematics}
\maketitle
\begin{abstract} 
We develop a limit theory for $1$-cochains of complete graphs with coefficients from a finite abelian group. We prove an analogue of the large deviation principle of Chatterjee and Varadhan for random cochains. We use these new tools to prove results about the homology of random $2$-dimensional simplicial complexes. 

More specifically, we prove that if $T_n$ is a random $2$-dimensional determinantal hypertree on $n$ vertices and $p$ is any prime, then
\[\frac{\dim H_1(T_n,\mathbb{F}_p)}{n^2}\]
converges to zero in probability. The same result holds for random $1$-out 2-complexes.
\end{abstract}

\section{Introduction}

Let $X_n$ be a sequence of random $2$-dimensional simplicial complexes on $n$ vertices such that the $1$-skeleton of $X_n$ is the complete graph $K_n$. Assume that for an abelian group $G$, we would like to give an upper bound on
\begin{equation}\label{EZint}\frac{\log \mathbb{E} |Z^1(X_n,G)|}{n^2}=\frac{\log \sum_{f\in C^1(K_n,G)} \mathbb{P}(f\in Z^1(X_n,G)) }{n^2}.\end{equation}

If for all $u\ge 0$, we can get a good upper bound on the number of cochains $f\in C^1(K_n,G)$ such that $\log \mathbb{P}(f\in Z^1(X_n,G))\ge -un^2$, then we can combine these bounds to get an estimate on \eqref{EZint}. For $G=\mathbb{Z}/2\mathbb{Z}$, any cochain in $C^1(K_n,G)$ is uniquely determined by its support. Therefore, in this case, $C^1(K_n,G)$ can be identified with the set of graphs on $n$ vertices. Thus, we need to count the graphs on $n$ vertices such that $\log \mathbb{P}(f\in Z^1(X_n,G))\ge -un^2$ holds for the corresponding cochain $f$. Provided that $\mathbb{P}(f\in Z^1(X_n,G))$ can be expressed in reasonable graph theoretic terms, it can be expected that we can use graph theoretic tools to estimate the number of such graphs. In fact, what we need here is a \emph{large deviation principle} for random graphs. Such a result was proved by Chatterjee and Varadhan \cite{chatterjee2011large}. Their proof relies on \emph{dense graph limit} theory, which was developed by Lov\'asz, Szegedy, T.~S\'os, Borgs, Chayes, Vesztergombi, Schrijver, Freedman and many others \cite{dl1,dl2,dl3,dl4,dl5,dl6,dl7,dl8,dl9,dl10}. See the book of Lov\'asz~\cite{lovasz2012large} for an overview of the area of graph limit theory. 

In the paper \cite{meszaros2024bounds}, the author successfully applied the above ideas to prove that
\[\frac{\dim H_1(X_n,\mathbb{F}_2)}{n^2}\]
converge to $0$ in probability in two specific cases: when $X_n$ is a $2$-dimensional determinantal hypertree, and when $X_n$ is a $1$-out $2$-complex.

For $|G|>2$, it is slightly more complicated to describe $C^1(K_n,G)$ in graph theoretic terms. Namely, a cochain in $C^1(K_n,G)$ corresponds to a complete directed graph on $n$ vertices, where the edges are labeled by elements of $G$ in such a way that if the directed edge $uv$ is labeled by~$g$, then the edge $vu$ is labeled by $-g$. Thus, to generalize the above ideas for $|G|>2$, we need to build a limit theory of such labeled complete directed graphs and prove the analogue of the large deviation principle of Chatterjee and Varadhan. The main goal of this paper is to develop these tools. Our results will allow us to generalize the mod $2$ results of \cite{meszaros2024bounds} to odd characteristics as we describe next. 

\subsection{The mod $p$ homology of random complexes}

\emph{Determinantal hypertrees} are natural higher dimensional generalizations of a uniform random spanning tree of a complete graph. They can be defined in any dimension, but in this paper, we restrict our attention to the $2$-dimensional case. A $2$-dimensional simplicial complex $S$ on the vertex set $[n]=\{1,2,\dots,n\}$ is called a ($2$-dimensional) hypertree, if
\begin{enumerate}[\hspace{30pt}(a)]
 \item\label{pra} $S$ has complete $1$-skeleton;
 \item\label{prb} The number of triangular faces of $S$ is ${n-1}\choose{2}$;
 \item\label{prc} The homology group $H_{1}(S,\mathbb{Z})$ is finite.
\end{enumerate}

In one dimension, a spanning tree must be connected, property \eqref{prc} above is the two dimensional analogue of this requirement. Note that any complex $S$ satisfying \eqref{pra} and \eqref{prc} must have at least ${n-1}\choose{2}$ triangular faces. For a graph $G$, if the reduced homology group $\tilde{H}_0(G,\mathbb{Z})$ is finite, then it is trivial. This statement fails in two dimensions since for a hypertree $S$, the order of $H_1(S,\mathbb{Z})$ can range from $1$ to $\exp(\Theta(n^2))$, see \cite{kalai1983enumeration}. Thus, while the homology of spanning trees is uninteresting, the homology of $2$-dimensional hypertrees is a very rich subject to study.

Kalai's generalization of Cayley's formula \cite{kalai1983enumeration} states that
\[\sum |H_{1}(S,\mathbb{Z})|^2=n^{{n-2}\choose {2}},\]
where the summation is over all the hypertrees $S$ on the vertex set $[n]$. This formula suggests that the natural probability measure on the set of hypertrees is the one where the probability assigned to a hypertree $S$ is \begin{equation}\label{measuredef}
 \frac{|H_{1}(S,\mathbb{Z})|^2}{n^{{n-2}\choose {2}}}.
\end{equation}
It turns out that this measure is a determinantal probability measure \cite{lyons2003determinantal,hough2006determinantal}. Thus, a random hypertree $T_n$ distributed according to \eqref{measuredef} is called a determinantal hypertree. General random determinantal complexes were investigated by Lyons \cite{lyons2009random}. While uniform random spanning trees are well-studied \cite{ald1,ald2,ald3,grimmett1980random,szekeres2006distribution,lyons2017probability}, a theory of determinantal hypertrees started to emerge only recently. The author determined the local weak limit of determinantal hypertrees~\cite{meszaros2022local}, combining this with estimates on the spectrum of the Laplacian matrix of $T_n$, the author proved that $n^{-2}\log |H_1(T_n,\mathbb{Z})|$ converge in probability to a constant~\cite{meszaros2025homology}. Vander Werf~\cite{werf2022determinantal} and the author~\cite{meszaros2023coboundary} investigated various expansion properties of the union of independent copies of $T_n$. Linial and Peled~\cite{linial2019enumeration} provided estimates on the number of hypertrees. All the above mentioned results extend to dimensions larger than $2$. In the $2$-dimensional case, Kahle and Newman~\cite{kahle2022topology} proved that with high probability the fundamental group $\pi_1(T_n)$ is hyperbolic and has
cohomological dimension 2. Moreover, the author provided lower bounds on $\dim H_1(T_n,\mathbb{F}_2)$~\cite{meszaros20242}, and proved Corollary~\ref{cory1} below in the special case of $p=2$~\cite{meszaros2024bounds}. See also~\cite{meszaros2023cohen,lee2025distribution} for results on random matrix models inspired by determinantal hypertrees. 


\begin{theorem}\label{thm1}
 Let $T_n$ be a $2$-dimensional determinantal hypertree on $n$ vertices, and let $G$ be a finite abelian group. Then 
 \[\lim_{n\to\infty}\frac{\log \mathbb{E}|Z^1(T_n,G)|}{n^2}=0.\]
\end{theorem}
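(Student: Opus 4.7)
Following the blueprint laid out in the introduction, I would prove the theorem by decomposing
\[\mathbb{E}|Z^1(T_n,G)|=\sum_{f\in C^1(K_n,G)}\mathbb{P}(f\in Z^1(T_n,G)),\]
and controlling, for each $u\ge 0$, the number $N_n(u)$ of cochains $f$ with $\log\mathbb{P}(f\in Z^1(T_n,G))\ge -un^2$. The target estimate, exactly as in the mod-$2$ case \cite{meszaros2024bounds}, is $\log N_n(u)\le un^2+o(n^2)$ uniformly in $u$ on a bounded range; after discretizing $u$ and summing the bounds weighted by $e^{-un^2}$, this yields $\log\mathbb{E}|Z^1(T_n,G)|=o(n^2)$.

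The most substantial step is the large-deviation machinery for antisymmetric $G$-colored complete digraphs. I would identify $C^1(K_n,G)$ with functions $f:[n]\times[n]\to G$ satisfying $f(u,v)=-f(v,u)$, equip it with a $G$-valued analogue of the cut metric, and define limit objects as measurable antisymmetric maps $W:[0,1]^2\to\cP(G)$ (with $W(y,x)$ the pushforward of $W(x,y)$ under negation). Following the strategy of Chatterjee and Varadhan, the task is then a large deviation principle for the uniform measure on $C^1(K_n,G)$ whose rate function is the Kullback--Leibler divergence from the uniform law on $G$, integrated over $[0,1]^2$. The technical inputs are a $G$-labeled Szemer\'edi-type regularity lemma and continuity, in the cut topology, of the cochain statistics that will control $\mathbb{P}(f\in Z^1(T_n,G))$.

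The second step is to express $\mathbb{P}(f\in Z^1(T_n,G))$ in such continuous terms. A cochain $f$ lies in $Z^1(T_n,G)$ iff every $2$-face of $T_n$ belongs to the set $A(f)\subseteq\binom{[n]}{3}$ of triangles satisfying $f(u,v)+f(v,w)+f(w,u)=0$. Since $T_n$ is a determinantal measure on the set of hypertrees, $\mathbb{P}(T_n\subseteq A(f))$ has a closed-form determinantal expression, and the spectral/Laplacian estimates developed in \cite{meszaros2025homology,meszaros2024bounds} bound $\log\mathbb{P}(f\in Z^1(T_n,G))/n^2$ from above by a function of the density of $A(f)$ with additive $o(1)$ error. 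Crucially, this density is a triangle-type statistic of $f$ which is continuous in the cut topology of the first step.

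Putting these pieces together, the LDP provides, for each limit $W$, asymptotic bounds on the number of cochains near $W$ and on the cocycle probability for any such cochain, whose product is, by a convexity/optimization argument, at most $\exp(un^2+o(n^2))$ whenever the associated cocycle probability lies in $[e^{-un^2},e^{-(u-\varepsilon)n^2}]$; summing over a finite $\varepsilon$-cover of the compact space of limits completes the proof along the lines of \cite{meszaros2024bounds}. The main obstacle is plainly the first step: extending Chatterjee--Varadhan from $\{0,1\}$-valued graphs to $G$-labeled antisymmetric digraphs is not purely formal, since both the notion of sampling convergence and the concentration/entropy bounds must accommodate the group operation and the antisymmetry constraint. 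Once the limit framework is in place, transplanting the mod-$2$ computation of \cite{meszaros2024bounds} to general $G$ should be comparatively routine.
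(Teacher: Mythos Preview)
Your proposal is correct and follows essentially the same route as the paper: the determinantal upper bound on $\log\mathbb{P}(f\in Z^1(T_n,G))$ is the edge-wise sum $\sum_\tau\log(t_Y(\tau)/n)$ from \cite{meszaros2024bounds} (not merely the global density of $A(f)$), which the paper packages as the cochain-graphon functional $b(W^G)=\langle W^G,\log\circ(W^G*W^G)\rangle$---only upper semicontinuous in the cut norm, not continuous---and the ``convexity/optimization argument'' matching $b$ with the entropy is exactly Gibbs' inequality $b+H\le 0$. The discretization into level sets of $b$ and the LDP-based count of cochains in each level set are carried out just as you outline.
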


\begin{corr}\label{cory1}
 For any prime $p$,
 \[\frac{\dim H_1(T_n,\mathbb{F}_p)}{n^2}\]
 converges to zero in probability.
\end{corr}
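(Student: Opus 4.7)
The plan is to deduce the corollary from Theorem \ref{thm1} by a direct Markov inequality argument applied to $G=\mathbb{F}_p$. The key point is that for coefficients in a field, $|Z^1(T_n,\mathbb{F}_p)|$ exponentially controls $\dim H_1(T_n,\mathbb{F}_p)$, so a polynomial (in fact $\exp(o(n^2))$) bound on the expectation of the former gives a concentration bound on the latter.

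In detail, since $\mathbb{F}_p$ is a field, $|Z^1(T_n,\mathbb{F}_p)| = p^{\dim Z^1(T_n,\mathbb{F}_p)}$, and from $Z^1 \supseteq B^1$ together with $H^1(T_n,\mathbb{F}_p)=Z^1/B^1$ we get
\[
|Z^1(T_n,\mathbb{F}_p)| \;\ge\; p^{\dim H^1(T_n,\mathbb{F}_p)}.
\]
By the universal coefficient theorem, $\dim H^1(T_n,\mathbb{F}_p) = \dim\Hom(H_1(T_n,\mathbb{Z}),\mathbb{F}_p) + \dim\mathrm{Ext}(H_0(T_n,\mathbb{Z}),\mathbb{F}_p)$; since the $1$-skeleton of $T_n$ is $K_n$, the group $H_0(T_n,\mathbb{Z})\cong\mathbb{Z}$ is torsion-free, so the Ext term vanishes, and for any finitely generated abelian group $A$ one has $\dim\Hom(A,\mathbb{F}_p)=\dim(A\otimes\mathbb{F}_p)$. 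Combined with the universal coefficient formula for homology (whose Tor term also vanishes here), this yields $\dim H^1(T_n,\mathbb{F}_p) = \dim H_1(T_n,\mathbb{F}_p)$. Therefore
\[
\mathbb{E}\,p^{\dim H_1(T_n,\mathbb{F}_p)} \;\le\; \mathbb{E}|Z^1(T_n,\mathbb{F}_p)|.
\]

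Now for any fixed $\varepsilon>0$, Markov's inequality gives
\[
\mathbb{P}\bigl(\dim H_1(T_n,\mathbb{F}_p) \ge \varepsilon n^2\bigr)
\;=\; \mathbb{P}\bigl(p^{\dim H_1(T_n,\mathbb{F}_p)} \ge p^{\varepsilon n^2}\bigr)
\;\le\; \frac{\mathbb{E}|Z^1(T_n,\mathbb{F}_p)|}{p^{\varepsilon n^2}}.
\]
By Theorem~\ref{thm1} applied with $G=\mathbb{F}_p$, the numerator equals $\exp(o(n^2))$, while the denominator is $\exp(\varepsilon n^2\log p)$, so the ratio tends to $0$. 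This establishes convergence to $0$ in probability.

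There is no serious obstacle here: the corollary is essentially a one-line Markov-inequality consequence of Theorem~\ref{thm1}. The only points requiring a moment of care are (i) verifying that the universal coefficient Ext/Tor corrections vanish so that $\dim H^1 = \dim H_1$, which is immediate from $H_0(T_n,\mathbb{Z})\cong\mathbb{Z}$, and (ii) the elementary observation that for field coefficients $|Z^1|\ge p^{\dim H^1}$. All the real work has already been done in proving Theorem~\ref{thm1}.
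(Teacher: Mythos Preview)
Your proof is correct and follows essentially the same approach as the paper's: both reduce to $\dim H_1(T_n,\mathbb{F}_p)=\dim H^1(T_n,\mathbb{F}_p)$, use $|Z^1|\ge p^{\dim H^1}$, and apply Markov's inequality together with Theorem~\ref{thm1}. The paper simply cites Hatcher for the isomorphism $H_1\cong H^1$, whereas you spell out the universal coefficient computation; otherwise the arguments are identical.
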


For a group $\Gamma$, let $\mg(\Gamma)$ be the minimum number of generators of $\Gamma$. 
\begin{corr}\label{cory2}
The random variable
 \[\frac{\mg( H_1(T_n,\mathbb{Z}))}{n^2}\]
converges to zero in probability. 
\end{corr}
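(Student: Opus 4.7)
The plan is to reduce the statement to Corollary~\ref{cory1}, using in addition the bound on $\log|H_1(T_n,\mathbb{Z})|$ provided by~\cite{meszaros2025homology}. Write the invariant factor decomposition $H_1(T_n,\mathbb{Z})\cong\bigoplus_{i=1}^{k_n}\mathbb{Z}/d_{n,i}\mathbb{Z}$ with $d_{n,1}\mid d_{n,2}\mid\cdots\mid d_{n,k_n}$, so that $k_n=\mg(H_1(T_n,\mathbb{Z}))$. Since $H_0(T_n,\mathbb{Z})\cong\mathbb{Z}$ is free, the universal coefficient theorem yields, for every prime $p$,
\[
\dim_{\mathbb{F}_p}H_1(T_n,\mathbb{F}_p)=\#\{i:p\mid d_{n,i}\},
\]
and whenever $p\mid d_{n,1}$, the chain of divisibilities $d_{n,1}\mid d_{n,i}$ forces $p\mid d_{n,i}$ for every $i$; in this case $\dim H_1(T_n,\mathbb{F}_p)=k_n$.

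Fix $\epsilon>0$. By~\cite{meszaros2025homology} there exists a constant $C$ such that $\mathbb{P}(\log|H_1(T_n,\mathbb{Z})|>Cn^2)\to 0$; set $M:=e^{C/\epsilon}$. On the event $\{k_n>\epsilon n^2\}\cap\{\log|H_1(T_n,\mathbb{Z})|\le Cn^2\}$, the trivial inequality $|H_1(T_n,\mathbb{Z})|\ge d_{n,1}^{k_n}$ gives $d_{n,1}\le|H_1(T_n,\mathbb{Z})|^{1/k_n}<M$, so the smallest prime divisor $p$ of $d_{n,1}$ satisfies $p<M$, and by the observation above $\dim H_1(T_n,\mathbb{F}_p)=k_n>\epsilon n^2$. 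A union bound therefore yields
\[
\mathbb{P}(k_n>\epsilon n^2)\le\mathbb{P}(\log|H_1(T_n,\mathbb{Z})|>Cn^2)+\sum_{\substack{p<M\\p\textrm{ prime}}}\mathbb{P}(\dim H_1(T_n,\mathbb{F}_p)>\epsilon n^2),
\]
whose first term tends to $0$ by~\cite{meszaros2025homology} and whose finitely many remaining terms each tend to $0$ by Corollary~\ref{cory1}.

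The only subtle point is that the prime~$p$ witnessing the identity $\mg(H_1(T_n,\mathbb{Z}))=\dim H_1(T_n,\mathbb{F}_p)$ is a random quantity that may a priori grow with~$n$, so Corollary~\ref{cory1} cannot be applied to this unknown prime directly. The dichotomy above circumvents this: any witnessing prime larger than~$M$, together with $k_n>\epsilon n^2$, would force $\log|H_1(T_n,\mathbb{Z})|>Cn^2$, contradicting the known asymptotics of $|H_1(T_n,\mathbb{Z})|$. Thus the essential content beyond Corollary~\ref{cory1} is the quantitative upper bound on $|H_1(T_n,\mathbb{Z})|$ supplied by~\cite{meszaros2025homology}, and this is the only place where an external input is needed.
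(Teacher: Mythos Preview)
Your proof is correct and follows the same overall strategy as the paper: use a bound on $|H_1(T_n,\mathbb{Z})|$ to confine the prime witnessing $\mg(H_1(T_n,\mathbb{Z}))=\dim H_1(T_n,\mathbb{F}_p)$ to a bounded range, then apply Corollary~\ref{cory1} to each of the finitely many candidate primes. The one substantive difference is the source of the torsion bound. You invoke the probabilistic convergence of $n^{-2}\log|H_1(T_n,\mathbb{Z})|$ from~\cite{meszaros2025homology}, whereas the paper uses the elementary deterministic inequality $|\torsion H_1(X,\mathbb{Z})|\le 3^{n^2/4}$, valid for \emph{any} simplicial complex on $n$ vertices (quoted from~\cite{rc3} as Lemma~\ref{torsionbound}). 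The paper's choice keeps the argument more self-contained and avoids appealing to a considerably deeper result; your argument would go through verbatim with this deterministic bound substituted for the probabilistic one, and would then no longer need the event $\{\log|H_1(T_n,\mathbb{Z})|\le Cn^2\}$ at all.
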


The same results hold of the \emph{$1$-out $2$-complex} $S_2(n,1)$ which is defined as follows. We start with the complete graph on the vertex set $[n]$, and independently for each edge $\{u,v\}\in {{[n]}\choose{2}}$, we choose a third vertex $w$ from the set $[n]\setminus \{u,v\}$ uniformly at random and add the triangular face $\{u,v,w\}$ to the complex. If a triangular face is chosen at multiple edges, we only keep one copy of that face. 

\begin{theorem}\label{thm2}
Consider the $1$-out $2$-complex $S_2(n,1)$. For any finite abelian group $G$, 
 \[\lim_{n\to\infty}\frac{\log \mathbb{E}|Z^1(S_2(n,1),G)|}{n^2}=0.\]

For any prime $p$,
 \[\frac{\dim H_1(S_2(n,1),\mathbb{F}_p)}{n^2}\]
 converges to zero in probability.

Moreover,
\[\frac{\mg( H_1(S_2(n,1),\mathbb{Z}))}{n^2}\]
converges to zero in probability. 

\end{theorem}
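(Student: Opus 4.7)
The plan is to follow the three-step strategy sketched in the paper's introduction, now applied to the $1$-out model: first establish the cocycle-count bound $\log\mathbb{E}|Z^1(S_2(n,1),G)|=o(n^2)$ for every finite abelian group $G$; then derive the mod~$p$ homology dimension statement by Markov's inequality; and finally deduce the generator bound from the family of mod~$p$ estimates. The $1$-out construction is particularly amenable to the first step because its edge-wise product structure makes the log-probability of a fixed cochain being closed a sum of purely local statistics of that cochain.

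Concretely, for any $f\in C^1(K_n,G)$ the independence across edges of the third-vertex choices gives
\[
\mathbb{P}\bigl(f\in Z^1(S_2(n,1),G)\bigr)=\prod_{\{u,v\}\in\binom{[n]}{2}}\frac{\bigl|\{w\in [n]\setminus\{u,v\}:\ f(uv)+f(vw)+f(wu)=0\}\bigr|}{n-2},
\]
so $-\log\mathbb{P}(f\in Z^1)/n^2$ is a bounded functional of $f$ that is continuous in the cochain-limit topology developed earlier in the paper. Feeding this into the analogue of the Chatterjee--Varadhan large deviation principle yields, for every $u\ge 0$, a counting bound
\[
\#\bigl\{f\in C^1(K_n,G):\ -\log\mathbb{P}(f\in Z^1(S_2(n,1),G))\le un^2\bigr\}\le \exp\bigl((\psi_G(u)+o(1))n^2\bigr),
\]
with a rate function $\psi_G$ satisfying the variational identity $\sup_{u\ge 0}(\psi_G(u)-u)=0$. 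A layer decomposition of $\mathbb{E}|Z^1|=\sum_f\mathbb{P}(f\in Z^1)$ by the value of $-\log\mathbb{P}/n^2$ then delivers $\log\mathbb{E}|Z^1(S_2(n,1),G)|\le o(n^2)$, and the zero cochain supplies the matching lower bound.

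Given the cocycle-count bound, the mod~$p$ statement follows from $p^{\dim H_1(X,\mathbb{F}_p)}\le |Z^1(X,\mathbb{F}_p)|$ and Markov: $\mathbb{P}(\dim H_1(X,\mathbb{F}_p)>\varepsilon n^2)\le e^{o(n^2)}p^{-\varepsilon n^2}\to 0$. For the generator bound, universal coefficients together with the torsion-freeness of $H_0(S_2(n,1),\mathbb{Z})$ give $\mg(H_1(S_2(n,1),\mathbb{Z}))=\max_p\dim H_1(S_2(n,1),\mathbb{F}_p)$. Fix $\varepsilon>0$ and a threshold $P=P(\varepsilon)$, a large constant to be chosen. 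For primes $p\le P$, a finite union bound of the mod~$p$ estimates controls the maximum. For $p>P$, the deterministic Hadamard inequality applied to the sparse boundary matrix $\partial_2$ (whose rows have three nonzero $\pm 1$ entries) gives $\log|H_1(X,\mathbb{Z})_{tors}|=O(n^2)$, so at any such prime the number of elementary divisors divisible by $p$ is at most $\log_P|H_1|_{tors}=O(n^2/\log P)$; taking $P$ large enough forces this below $\varepsilon n^2/2$, while the rank contribution is bounded by $\dim H_1(X,\mathbb{F}_2)$ and controlled by the mod~$2$ estimate. This pattern follows the $p=2$ treatment of \cite{meszaros2024bounds}.

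The main obstacle is the first step: verifying that $f\mapsto -\log\mathbb{P}(f\in Z^1)/n^2$ is continuous in the cochain-limit topology and, more substantively, that the associated rate function satisfies the variational identity $\sup_u(\psi_G(u)-u)=0$---which is precisely the matching of the large deviation upper bound to a partition-function calculation. Because the $1$-out model's log-probability is a sum of purely local triangle statistics with no non-local corrections, it is the cleanest setting in which to carry out this identification and thus a natural testing ground for the limit theory developed in this paper.
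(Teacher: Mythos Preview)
Your overall strategy matches the paper's exactly: express $\log\mathbb{P}(f\in Z^1(S_2(n,1),G))$ via the product formula over edges, recognise the resulting quantity as $\tfrac{n^2}{2}b(W^G_f)+O(n)$ with $b(W^G)=\langle W^G,\log\circ(W^G*W^G)\rangle$, feed this into the large deviation principle to count cochains in each level set of $b$, and finish with a layer decomposition; the mod~$p$ and generator statements then follow precisely as you outline (the paper uses the same torsion bound $|\mathrm{Torsion}\,H_1|\le 3^{n^2/4}$ and the same split into small/large primes).

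Two remarks. First, the functional $b$ is \emph{not} continuous in the cut topology (the paper gives an explicit counterexample); it is only upper semicontinuous, which is all you need to make the level sets $\widetilde{F}_u=\{b\ge -u\}$ closed. It is also not bounded below. Second, the step you correctly flag as the ``main obstacle''---your variational identity $\sup_u(\psi_G(u)-u)=0$---is in the paper the inequality $b(W^G)+H(W^G)\le 0$ for $W^G\in\mathcal{W}^G_{00}$, and its proof is a one-line application of Gibbs' inequality: the integrand equals $-\dkl\bigl((W^g(x,y))_g\,\|\,((W^G*W^G)(x,y,g))_g\bigr)\le 0$. So the obstacle you leave open has a short resolution once one recognises $W^G*W^G$ as a probability kernel on $G$ (Lemma~\ref{ConvinW00}).
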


Linial and Peled \cite{linial2019enumeration} proved the second statement of Theorem~\ref{thm2} in the case where the field~$\mathbb{F}_p$ is replaced with~$\mathbb{R}$. 
See \cite{linial2016phase,linial2006homological,rc3,rc4,rc5,rc6,rc7,rc8,rc9,rc10,bobrowski2018topology,bobrowski2022random} for further results on the homology of random simplicial complexes.

\subsection{Cochain graphons and the large deviation principle for random cochains}
In this section, we describe a limit theory for $1$-dimensional cochains with coefficients from a finite abelian group $G$. There are many somewhat similar generalizations of the notion of graphons \cite{janson2008graph,kunszenti2022multigraph,kolossvary2011multigraph,lovasz2010limits,kunszenti2019uniqueness}, but none of these meets our needs perfectly. 

Let $S_G=[0,1]^2\times G$. We endow $S_G$ with the product measure of the Lebesgue measure on $[0,1]^2$ and the counting measure on $G$. Thus, for $1\le q\le \infty$, we can speak about $L^{q}(S_G)$, and we can also define the $L^q$ norm $\|W^G\|_q$ of any measurable map $W^G:S_G\to \mathbb{R}$.

We say that $W^G:S_G\to \mathbb{R}$ is symmetric if 
\[W^G(x,y,g)=W^G(y,x,-g)\text{ for all }(x,y,g)\in S_G.\]

The set of cochain kernels is defined as
\[\mathcal{W}^G=\{W^G\in L^\infty(S_G)\,:\, W^G\text{ is symmetric}\}.\]

The set of cochain graphons is defined as
\[\mathcal{W}_0^G=\{W^G\in \mathcal{W}^G\,:\, 0\le W^G\le 1\}.\]

These definitions can be motivated as follows. Let $K_n$ be the complete graph on $[n]=\{1,2,\dots,n\}$ viewed as a $1$-dimensional simplicial complex. Then a cochain $f\in C^1(K_n,G)$ can be considered as a function $f:\{(u,v)\in [n]^2\,:\,u\neq v\}\to G$ such that $f(u,v)=-f(v,u)$ for all $(u,v)\in [n]^2$, $u\neq v$. One can consider the graph of this $f$, that is,
\[\gr f=\{(u,v,f(u,v))\,:\,(u,v)\in [n]^2,\,u\neq v\}\subset [n]^2\times G.\]

We define $W^G_f:S_G\to [0,1]$ as a continuous version of the indicator function of $\gr f$, namely, let
\[W^G_f(x,y,g)=\begin{cases}
1&\text{if $\lceil nx\rceil\neq \lceil ny\rceil$ and $f(\lceil nx\rceil, \lceil ny\rceil)=g$, and}\\
0&\text{otherwise.}
\end{cases}\]

It is straightforward to check that $W^G_f\in \mathcal{W}_0^G$. Thus, $C^1(K_n,G)$ can be embedded into $\mathcal{W}_0^G$ for all $n$. 

In this paper, our goal is to understand the asymptotic behavior of certain random cochains using cochain graphons. Let us define these random cochains. Let $\nu$ be a symmetric non-degenerate probability distribution on $G$, that is, \[\nu\in \mathbb{R}^G \text{ such that $\sum_{g\in G} \nu(g)=1$, $\nu(g)>0$ and $\nu(g)=\nu(-g)$ for all $g\in G$.}\] Let $F_n=F_{n,\nu}\in C^1(K_n,G)$ be a random cochain such that $(F_n(u,v))_{1\le u<v\le n}$ are independent, and \[\mathbb{P}(F_n(u,v)=g)=\nu(g)\text{ for all $1\le u<v\le n$ and $g\in G$.}\] 
Due to the symmetry of $\nu$, we have the following invariance property of $F_n$.

\begin{lemma}\label{distsym}
Let $\pi$ be a permutation of $[n]$. For any $f\in C^1(K_n,G)$, we define $f^\pi\in C^1(K_n,G)$ by $f^\pi(u,v)=f(\pi(u),\pi(v))$. Then $F_n$ and $F_n^\pi$ have the same distribution.
\end{lemma}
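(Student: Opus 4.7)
The plan is to observe that the distribution of $F_n$ is completely determined by the joint distribution of $(F_n(u,v))_{1\le u<v\le n}$ (since the remaining values are fixed by the cochain relation $F_n(v,u)=-F_n(u,v)$), and then to show that $(F_n^\pi(u,v))_{1\le u<v\le n}$ has the same joint distribution, namely independent with each marginal equal to $\nu$.

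First I would verify that $F_n^\pi$ is indeed a cochain: $F_n^\pi(v,u)=F_n(\pi(v),\pi(u))=-F_n(\pi(u),\pi(v))=-F_n^\pi(u,v)$. Next, for each ordered pair $(u,v)$ with $u<v$, I would express $F_n^\pi(u,v)$ in terms of the independent family $(F_n(i,j))_{1\le i<j\le n}$: if $\pi(u)<\pi(v)$, then $F_n^\pi(u,v)=F_n(\pi(u),\pi(v))$, while if $\pi(u)>\pi(v)$, then $F_n^\pi(u,v)=-F_n(\pi(v),\pi(u))$.

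The map sending an ordered pair $(u,v)$ with $u<v$ to the ordered pair $(\min(\pi(u),\pi(v)),\max(\pi(u),\pi(v)))$ is a bijection on $\{(i,j)\,:\,1\le i<j\le n\}$, so each variable $F_n^\pi(u,v)$ is a function of a distinct element of the independent family, which gives independence of $(F_n^\pi(u,v))_{u<v}$. For the marginals, in the first case $F_n^\pi(u,v)$ has distribution $\nu$, and in the second case $F_n^\pi(u,v)$ has the distribution of $-X$ where $X\sim\nu$, which equals $\nu$ by the symmetry assumption $\nu(g)=\nu(-g)$. Hence $(F_n^\pi(u,v))_{1\le u<v\le n}$ and $(F_n(u,v))_{1\le u<v\le n}$ have identical joint distributions, and since both $F_n$ and $F_n^\pi$ are cochains, they have the same distribution on $C^1(K_n,G)$.

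There is no real obstacle here; the only subtlety worth flagging is handling the two cases $\pi(u)<\pi(v)$ and $\pi(u)>\pi(v)$, where the symmetry of $\nu$ is essential to merge them into a single statement. Everything else is bookkeeping.
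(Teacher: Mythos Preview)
Your proof is correct and matches the paper's approach: the paper states this lemma without proof, noting only that it follows from the symmetry of $\nu$, and your argument supplies exactly the routine verification the paper omits.
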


We will prove a large deviation estimate for $F_n$, but in order to do this first we need to define a topology on $\mathcal{W}^G_0$.

For $W^G:S_G\to\mathbb{R}$ and $g\in G$, we define $W^g:[0,1]^2\to\mathbb{R}$ by $W^g(x,y)=W^G(x,y,g)$.

For a bounded measurable function $W:[0,1]^2\to\mathbb{R}$, we define the cut norm of $W$ as
\[\|W\|_{\square}=\sup_{S,T} \left|\int_{S\times T} W(x,y) \,dxdy\right|,\]
where the supremum is over all measurable subsets $S$ and $T$ of $[0,1]$.

The cut norm of a cochain kernel $W^G$ is defined as
\[\|W^G\|_{\square}=\sum_{g\in G}\|W^g\|.\]

Let $S_{[0,1]}$ be the set of invertible measure preserving maps from $[0,1]$ to itself. Given $\varphi\in S_{[0,1]}$ and $W^G\in \mathcal{W}^G$, we define $(W^G)^\varphi\in \mathcal{W}^G$ by
\[(W^G)^{\varphi}(x,y,g)=W^G(\varphi(x),\varphi(y),g).\]

The cut distance of two cochain kernels $V^G,W^G\in \mathcal{W}^G$ is defined as
\[\delta_\square(V^G,W^G)=\inf_{\varphi\in S_{[0,1]}} \|V^G-(W^G)^\varphi\|_\square.\]

The cut distance only gives a pseudometric, since different cochain kernels can have distance zero. Identifying the cochain kernels at distance zero, we obtain the set $\widetilde{\mathcal{W}}^G$ of unlabeled cochain kernels. The equivalence class of a cochain kernel $W^G$ will be denoted by $\widetilde{W}^G$. The set of unlabeled cochain graphons $\widetilde{\mathcal{W}}_0^G$ is defined analogously. By a closed subset of $\widetilde{\mathcal{W}}^G_0$, we mean a closed subset of the metric space $(\widetilde{\mathcal{W}}^G_0,\delta_\square)$. Note that $\widetilde{\mathcal{W}}_0^G$ is compact, see Lemma~\ref{lemmacompact}.

Let \[\mathcal{W}^G_{00}=\left\{W^G\in\mathcal{W}_0^G\,:\,\sum_{g\in G} W^g(x,y)=1\text{ for almost all $(x,y)\in [0,1]^2$}\right\}.\]

For a $W^G\in \mathcal{W}_0^G$, we define the rate function

\begin{equation}\label{ratefunctiondef}I_{\nu}(W^G)=
\begin{cases}
\mathlarger{\frac{1}2\int_{[0,1]^2} \sum_{g\in G}W^g(x,y)\log \frac{W^g(x,y)}{\nu(g)} dxdy}&\text{if }W^G\in \mathcal{W}^G_{00},\\
\infty&\text{otherwise.}
\end{cases}\end{equation}

We prove in Section~\ref{secweakLDP} that the function $I_\nu$ is well-defined on $\widetilde{\mathcal{W}}_0^G$ and is lower semicontinuous with respect to the cut metric.

We made all the necessary preparations in order to be able to state the large deviation principle for the random cochain $F_{n,\nu}$, which is a generalization of the large deviation principle for the Erd\H{o}s-R\'enyi random graph by Chatterjee and Varadhan \cite{chatterjee2011large}.

\begin{theorem}\label{largediv0}
 Let $\nu$ be a symmetric non-degenerate probability distribution on $G$. Consider the random cochain $F_{n,\nu}$, and the corresponding random cochain graphon $R_n^G=W^G_{F_{n,\nu}}$. 
 \begin{enumerate}[(a)]
 \item For any closed subset $\widetilde{F}$ of $\widetilde{\mathcal{W}}_0^G$, we have
 \[\limsup_{n\to\infty} \frac{1}{n^2}\log \mathbb{P}(\widetilde{R}_n^G\in \widetilde{F})\le -\inf_{\widetilde{W}^G\in \widetilde{F}} I_\nu(\widetilde{W}^G).\]

 \item For any open subset $\widetilde{E}$ of $\widetilde{\mathcal{W}}_0^G$, we have
 \[\liminf_{n\to\infty} \frac{1}{n^2}\log \mathbb{P}(\widetilde{R}_n^G\in \widetilde{E})\ge -\inf_{\widetilde{W}^G\in \widetilde{E}} I_\nu(\widetilde{W}^G).\]
 \end{enumerate}
 
\end{theorem}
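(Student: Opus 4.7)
The plan is to follow the strategy of Chatterjee and Varadhan~\cite{chatterjee2011large}, adapting each step to the group-valued setting. The starting point is the product structure of the random cochain: the coordinates $F_n(u,v)$ for $1\le u<v\le n$ are independent with distribution $\nu$, so $\mathbb{P}(F_n=f)=\prod_{u<v}\nu(f(u,v))$ for every $f\in C^1(K_n,G)$. Every probability estimate therefore reduces to counting cochains, once we can decide which $f$ produce a graphon $W^G_f$ close to a given target in the cut metric.

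The technical heart of the argument is a counting estimate for step function cochain graphons. I would fix an equipartition $\cP=\{I_1,\dots,I_k\}$ of $[0,1]$ and restrict attention to a step function $W^G\in \mathcal{W}_{00}^G$ constant on each block $I_i\times I_j$ with value $p_{ij}(g)$. For $n$ a multiple of $k$, cochains whose block-wise label frequencies agree with those of $W^G$ up to additive error $\varepsilon$ are counted by a product of multinomial coefficients; combining Stirling's formula with $\mathbb{P}(F_n=f)=\prod_{u<v}\nu(f(u,v))$ yields
\[\mathbb{P}\bigl(\text{block statistics of }F_n\text{ are }\varepsilon\text{-close to }(p_{ij})\bigr)=\exp\bigl(-n^2\, I_\nu(W^G)+o(n^2)\bigr)\]
as $n\to\infty$ followed by $\varepsilon\to 0$ and $k\to\infty$; the factor $\tfrac12$ in \eqref{ratefunctiondef} accounts for the usual symmetric double counting of ordered block pairs, and the diagonal contribution is negligible. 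The key additional step is a two-sided comparison linking this empirical event to a cut-metric event: block statistics being $\varepsilon$-close to $(p_{ij})$ is equivalent, up to a cut-distance slack tending to zero with $\varepsilon$ and $1/k$, to $\delta_\square(W^G_{F_n},W^G)<\delta$ after a vertex relabeling.

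Given this counting lemma, the upper bound (a) follows from the compactness of $\widetilde{\mathcal{W}}_0^G$ proved in Lemma~\ref{lemmacompact}. For each $\widetilde W^G\in\widetilde F$ I would choose a small cut-metric ball around it, approximate $W^G$ inside that ball by a step function, bound the probability that $\widetilde R_n^G$ falls in the ball via the counting lemma, cover $\widetilde F$ by finitely many such balls, and apply a finite union bound. The lower semicontinuity of $I_\nu$ proved in Section~\ref{secweakLDP} ensures that the ball-wise infima of $I_\nu$ converge to the pointwise rate as the balls shrink. For the lower bound (b), given any $\widetilde W^G\in \widetilde E$ with $I_\nu(\widetilde W^G)<\infty$, I would approximate by a step function still lying in $\widetilde E$ and apply the counting lemma from below.

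The main obstacle I anticipate is the two-sided comparison linking cut-metric balls to events measurable in block statistics. Because $\delta_\square$ involves an infimum over all measure preserving maps $\varphi\in S_{[0,1]}$, a cochain graphon close to $W^G$ in cut distance may correspond to an essentially arbitrary relabeling of $[n]$, not just one respecting $\cP$. The standard remedy, which I expect to transfer to the group-valued setting, is to replace the infimum over $S_{[0,1]}$ by a minimum over permutations of a sufficiently fine refinement of $\cP$; the distributional symmetry of Lemma~\ref{distsym} then lets us absorb the finite union over partition-permutations into the $o(n^2)$ error. Carrying out this discretization uniformly across the $|G|$ components, with respect to the summed cut norm $\|W^G\|_\square=\sum_{g\in G}\|W^g\|_\square$, and correctly handling the symmetry $W^g(x,y)=W^{-g}(y,x)$ on the diagonal blocks, is the delicate point where the Chatterjee--Varadhan argument truly needs to be generalized rather than just transcribed.
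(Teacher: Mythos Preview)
Your direct-counting (method-of-types) approach is a legitimate alternative to the paper's route, and for the lower bound (b) it works essentially as you describe: the event that the block statistics of $F_n$ with respect to a fixed equipartition $\mathcal P$ are $\varepsilon$-close to $(p_{ij})$ is contained in a small cut-distance ball around the step graphon $W^G$, so your Stirling estimate gives the desired lower bound after approximating a general point of $\widetilde E$ by step functions. The upper bound (a), however, does not go through as you have outlined it. Covering $\widetilde F$ by cut balls and bounding each ball by the counting lemma requires the \emph{reverse} inclusion, and that direction of your claimed ``two-sided comparison'' is the problematic one: even after the best vertex relabeling, being $\eta$-close in cut norm to a $k$-step function $W^G$ only forces the block-averaged graphon $S_{\mathcal P}R_n^G$ to be cut-norm close to $W^G$ (via Lemma~\ref{factortwo}), not $L^1$-close, and your counting lemma controls $L^1$-balls in block statistics. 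The remedy you propose---passing to permutations of a fine refinement of $\mathcal P$---handles the reduction from $\delta_\square$ to $\|\cdot\|_\square$, but not the reduction from $\|\cdot\|_\square$-closeness to $L^1$-closeness of block statistics.

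The paper sidesteps this obstacle by not using direct counting at all. It first establishes a weak LDP on the labeled space $\mathcal W_0^G$ (with the weak topology induced by the functionals $Z_\phi$) via the abstract G\"artner--Ellis theorem: Lemma~\ref{explimit} computes the limiting logarithmic moment generating function, and Lemma~\ref{lambdanu} identifies its Legendre dual as $I_\nu$. The crucial observation is that closed cut-\emph{norm} balls $B_\square(W^G,r)$ are weakly compact (Lemma~\ref{topology}(b)), so the weak LDP already yields the correct upper bound for them---no comparison with block statistics is needed. The passage from the cut-distance event $\{\widetilde R_n^G\in\widetilde F\}$ to a single cut-norm ball is then done much as you envisage: apply Frieze--Kannan regularity (Lemma~\ref{stepregularity}) to $R_n^G$ itself, pigeonhole over the at most $k^n$ partitions and over finitely many quantized step values, and use the distributional symmetry of Lemma~\ref{distsym} to align the partition. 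If you prefer to salvage the counting approach, the fix is to cover the \emph{sample space} rather than $\widetilde F$: regularity applied to $R_n^G$ places it in one of $\exp(o(n^2))$ block-statistics events, for each of which your Stirling estimate gives the upper bound, and the lower semicontinuity of $I_\nu$ (Lemma~\ref{lowersemi}) then finishes since any such event meeting $\{\widetilde R_n^G\in\widetilde F\}$ has its step graphon close to $\widetilde F$.
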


\subsection{Open questions}

It is conjectured that for every prime $p$, the size of the $p^\infty$-torsion of $H_1(T_n,\mathbb{Z})$ is of constant order, more precisely:
\begin{conjecture}
Let $\Gamma_{n,p}$ be the $p^\infty$-torsion of $H_1(T_n,\mathbb{Z})$. Then $\Gamma_{n,p}$ is tight, that is, given any $\varepsilon>0$, there is a $K$, such that
\[\mathbb{P}(|\Gamma_{n,p}|>K)<\varepsilon\text{ for all large enough }n.\]
This would also imply that $\dim H_1(T_n,\mathbb{F}_p)$ is tight.
\end{conjecture}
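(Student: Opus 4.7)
The plan is to prove tightness via Cohen-Lenstra-style moments: show that for every $k\ge 1$ the surjection moments
\[M_k(n) = \mathbb{E}\bigl|\Sur(\Gamma_{n,p},\mathbb{Z}/p^k\mathbb{Z})\bigr|\]
are bounded uniformly in $n$. By the standard moment method for random finite abelian $p$-groups (as in Wood's work on sandpile groups and cokernels), uniform boundedness of all $M_k(n)$ would imply convergence in distribution of $\Gamma_{n,p}$ to an explicit Cohen-Lenstra-type limit, hence tightness of both $|\Gamma_{n,p}|$ and $\dim H_1(T_n,\mathbb{F}_p)$.

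First I would rewrite $M_k(n)$ as a cochain count. Since $T_n$ has complete (hence connected) $1$-skeleton and finite $H_1$, the universal coefficient theorem gives
\[\bigl|\Hom(\Gamma_{n,p},\mathbb{Z}/p^k\mathbb{Z})\bigr| = \bigl|H^1(T_n,\mathbb{Z}/p^k\mathbb{Z})\bigr| = \frac{\bigl|Z^1(T_n,\mathbb{Z}/p^k\mathbb{Z})\bigr|}{p^{k(n-1)}},\]
and by M\"obius inversion over subgroups the same relation holds with $\Sur$ on the left and a signed combination of $|Z^1(T_n,\mathbb{Z}/p^j\mathbb{Z})|$, $j\le k$, on the right. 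Combined with the determinantal weighting (the hypertree $S$ has probability proportional to $|H_1(S,\mathbb{Z})|^2$), this reduces the computation to a weighted hypertree enumeration
\[M_k(n)\cdot n^{\binom{n-2}{2}} = \sum_S |H_1(S,\mathbb{Z})|^2 \cdot \bigl|\Sur(H_1(S,\mathbb{Z}),\mathbb{Z}/p^k\mathbb{Z})\bigr|.\]
The hope is an exact evaluation of this sum in the spirit of Kalai's generalization of Cayley's formula, obtained by reinterpreting the right-hand side as counting triples consisting of a hypertree, an element of $H_1$, and a surjection from $H_1$ onto $\mathbb{Z}/p^k\mathbb{Z}$, and then computing via a twisted matrix-tree / Laplacian calculation over $\mathbb{Z}/p^k\mathbb{Z}$.

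The main obstacle is the gap between Theorem~\ref{thm1} and the bound that tightness requires. Theorem~\ref{thm1} only yields $\mathbb{E}|Z^1(T_n,\mathbb{Z}/p^k\mathbb{Z})| = p^{k(n-1)+o(n^2)}$, whereas tightness of $M_k(n)$ demands the much sharper
\[\mathbb{E}\bigl|Z^1(T_n,\mathbb{Z}/p^k\mathbb{Z})\bigr| = p^{k(n-1)}\cdot O(1),\]
an improvement by a full factor of $e^{o(n^2)}$. Closing this gap seems to require refining the large deviation principle (Theorem~\ref{largediv0}) into a sharp asymptotic expansion around the minimizer of $I_\nu$, together with a second-moment / local limit analysis of how the determinantal law on hypertrees couples to deviations of the cochain from its typical value. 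This lies well beyond what the dense-graphon framework of the present paper seems able to deliver, which is presumably why the statement is posed as a conjecture rather than proved.
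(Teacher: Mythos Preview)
The statement you are attempting to address is a \emph{conjecture} in the paper, appearing in the ``Open questions'' section; the paper provides no proof, and indeed explicitly presents it as open. So there is no proof in the paper to compare your proposal against.

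Your proposal is not a proof either, and to your credit you recognize this in the final paragraph. What you have written is a reasonable research outline --- the moment method via $\mathbb{E}|\Sur(\Gamma_{n,p},\mathbb{Z}/p^k\mathbb{Z})|$ is exactly the standard route to Cohen--Lenstra-type results, and your reduction of $|\Hom(\Gamma_{n,p},\mathbb{Z}/p^k\mathbb{Z})|$ to $|Z^1(T_n,\mathbb{Z}/p^k\mathbb{Z})|/p^{k(n-1)}$ via the universal coefficient theorem is correct. You also correctly identify the central obstruction: Theorem~\ref{thm1} gives $\mathbb{E}|Z^1(T_n,\mathbb{Z}/p^k\mathbb{Z})|=\exp(o(n^2))$, whereas even the weakest form of tightness would require this expectation to be $p^{k(n-1)}\cdot O(1)$, a gap the graphon/large-deviation machinery of the paper cannot close. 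This diagnosis matches the paper's own framing: the conjecture is stated precisely because the methods developed here fall short of it by this margin.

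One minor remark: your hoped-for ``exact evaluation \dots\ in the spirit of Kalai's formula'' via a twisted matrix-tree identity over $\mathbb{Z}/p^k\mathbb{Z}$ is optimistic. Kalai's formula relies on the Cauchy--Binet theorem over a field (or at least a domain where determinants behave well), and there is no known analogue that handles the extra $|\Sur(H_1(S),\mathbb{Z}/p^k\mathbb{Z})|$ factor cleanly; this is part of why the conjecture is hard, not just a matter of sharpening the analysis.
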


Kahle and Newman~\cite{kahle2022topology} had an even stronger conjecture, namely, they conjectured that for any prime $p$, the $p^\infty$-torsion $\Gamma_{n,p}$ of $H_1(T_n,\mathbb{Z})$ converges to the \emph{Cohen-Lenstra distribution}. By now we know that this stronger conjecture is not true for $p=2$ \cite{meszaros20242}, but the case $p>2$ is still open. More precisely, we have the following conjecture. 
\begin{conjecture}[Kahle and Newman~\cite{kahle2022topology}]
Let $p$ be an odd prime, and let $G$ be a finite abelian $p$-group, then
\[\lim_{n\to\infty} \mathbb{P}(\Gamma_{n,p}\cong G)=\frac{1}{|\Aut(G)|}\prod_{j=1}^{\infty}\left(1-p^{-j}\right).\]
\end{conjecture}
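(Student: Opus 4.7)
The plan is to attack the conjecture via the surjection--moment method. For odd primes $p$, by the moment theorem of Wood, convergence of $\Gamma_{n,p}$ to the Cohen--Lenstra distribution is equivalent to the family of identities
\[
\lim_{n\to\infty}\mathbb{E}\,|\Sur(\Gamma_{n,p},G)|=1
\qquad\text{for every finite abelian $p$-group } G.
\]
Since $G$ is a $p$-group, every homomorphism $H_1(T_n,\mathbb{Z})\to G$ factors through $\Gamma_{n,p}$, and classifying homomorphisms by image gives $|\Hom(\Gamma_{n,p},G)|=\sum_{H\le G}|\Sur(\Gamma_{n,p},H)|$. Thus the Sur--moment statement is equivalent, by M\"obius inversion on the subgroup lattice, to the Hom--moment asymptotics $\mathbb{E}\,|\Hom(H_1(T_n,\mathbb{Z}),G)|\to s(G)$ for every such $G$, where $s(G)$ is the number of subgroups of $G$. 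Since $T_n$ is connected with complete $1$-skeleton, the universal coefficient theorem combined with $|B^1(T_n,G)|=|G|^{n-1}$ yields
\[
|\Hom(H_1(T_n,\mathbb{Z}),G)| \;=\; |H^1(T_n,G)| \;=\; \frac{|Z^1(T_n,G)|}{|G|^{n-1}},
\]
so the whole program reduces to the sharp cocycle--count asymptotic $\mathbb{E}\,|Z^1(T_n,G)|\sim s(G)\,|G|^{n-1}$.

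The second step is to feed this into the cochain--graphon LDP. Take $\nu\equiv 1/|G|$, which is symmetric and non-degenerate, and couple $F_{n,\nu}$ independently of $T_n$; then
\[
\mathbb{E}\,|Z^1(T_n,G)| \;=\; |G|^{\binom{n}{2}}\,\mathbb{P}\bigl(F_{n,\nu}\in Z^1(T_n,G)\bigr).
\]
Using the determinantal structure of $T_n$ and the matrix--tree type identities behind Kalai's formula, I would try to express, up to subexponential error, $-n^{-2}\log\mathbb{P}(f\in Z^1(T_n,G))$ as a continuous functional on cochain--graphon space evaluated at $W^G_f$, so that Theorem~\ref{largediv0} applies. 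Theorem~\ref{thm1} then asserts that the resulting variational problem has minimum value $0$ attained at the uniform cochain graphon $W^G\equiv 1/|G|$; this minimizer corresponds exactly to the coboundary orbit $B^1(T_n,G)$, which already contributes $|G|^{n-1}$ to the cocycle count, so at the exponential scale the Cohen--Lenstra prediction is vindicated.

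To obtain the required $1+o(1)$ precision on top of $|G|^{n-1}$ one has to perform a local, microscopic analysis around that minimizer. The natural tool is the local weak limit of $T_n$ established in \cite{meszaros2022local}: the subleading multiplicative correction to $\mathbb{E}\,|Z^1(T_n,G)|/|G|^{n-1}$ should be expressible as a convergent sum, indexed by finite motifs in the limiting Poisson--weighted tree, of local cocycle counts, and the Cohen--Lenstra prediction amounts to showing that this sum evaluates to $s(G)$. The principal obstacle, and the reason the conjecture remains open, is precisely this matching of scales: the cut--metric LDP has multiplicative error $e^{o(n^2)}$, whereas Cohen--Lenstra demands multiplicative error $1+o(1)$, a gap that no purely graphon--theoretic argument can close; one must supplement the LDP with sharp second--moment control of $|Z^1(T_n,G)|$ and a delicate local computation on the limiting tree. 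The known failure at $p=2$, driven by a $\mathbb{Z}/2\mathbb{Z}$-valued quadratic obstruction \cite{meszaros20242}, is a warning that this local analysis is genuinely sensitive to the arithmetic of $G$, so any approach uniform in $p$ is bound to fail.
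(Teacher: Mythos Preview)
The statement you are addressing is a \emph{conjecture}, not a theorem, and the paper does not prove it. It is listed in the ``Open questions'' subsection precisely because it remains open; the paper explicitly says ``the case $p>2$ is still open.'' There is therefore no ``paper's own proof'' to compare your proposal against.

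Your proposal is not a proof either, and to your credit you say so: you correctly identify the reduction to $\mathbb{E}\,|Z^1(T_n,G)|\sim s(G)\,|G|^{n-1}$ via the surjection--moment method, and you correctly observe that Theorem~\ref{thm1} only gives $\mathbb{E}\,|Z^1(T_n,G)|=e^{o(n^2)}$, which is exponentially too coarse. The passage from $e^{o(n^2)}$ to $(1+o(1))\,s(G)\,|G|^{n-1}$ is not a technical refinement but the entire content of the conjecture; your suggestion to use the local weak limit and a second--moment argument is a reasonable research direction, but no step of it is actually carried out, and there is no indication in the literature that it succeeds. In short, your write-up is a plausible sketch of how one might \emph{attempt} the conjecture, together with an honest acknowledgment of where the attempt stalls; it is not a proof, and the paper does not claim one either.
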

This conjecture would imply that
\[\lim_{n\to\infty} \mathbb{P}(\dim H_1(T_n,\mathbb{F}_p)=r)=p^{-r^2} \prod_{j=1}^{r} \left(1-p^{-j}\right)^{-2} \prod_{j=1}^{\infty}\left(1-p^{-j}\right).\]
The same should be also true if we consider the uniform distribution on the hypertrees, see~\cite{kahle2020cohen} for some numerical evidence. See the survey of Wood~\cite{wood2022probability} for more information on the Cohen-Lenstra heuristics. For a toy model motivated by determinantal hypertrees, the Cohen-Lenstra limiting distribution was established by the author for $p\ge 5$ \cite{meszaros2023cohen}, see also \cite{lee2025distribution} for generalizations.

In light of Corollary~\ref{cory2}, the following conjecture is also interesting.
\begin{conjecture}
Let $\pi_1(T_n)$ be the fundamental group of $T_n$, then
\[\frac{\mg( \pi_1(T_n))}{n^2}\]
converges to zero in probability. 
\end{conjecture}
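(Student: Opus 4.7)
The plan is to combine a topological reduction of $\pi_1(T_n)$ with a probabilistic analysis exploiting tools developed in the paper. Fix a spanning tree $T$ of $K_n$ and take the standard presentation of $\pi_1(T_n)$: one generator $g_e$ per non-tree edge $e$ and one length-$\le 3$ relation per triangular face of $T_n$ (tree letters are dropped since those generators are trivial after collapsing $T$). The length-three form of the relations is well-suited to Tietze eliminations. A natural combinatorial device is a bootstrap percolation on edges: start with the tree edges ``closed'' and iteratively close any non-tree edge $e$ that lies in a face of $T_n$ whose other two edges are already closed. Ordering non-tree edges as $e_1,e_2,\ldots$ in closure order and using the witnessing face $f_i$ of each $e_i$ to solve for $g_{e_i}$ gives, when everything goes well, the bound
\[
\mg(\pi_1(T_n)) \le \binom{n}{2} - |\mathrm{CL}(T_n,T)|,
\]
where $\mathrm{CL}(T_n,T)$ denotes the final closed set.

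The probabilistic step is to show that, for an appropriately chosen $T$, $|\mathrm{CL}(T_n,T)| = \binom{n}{2} - o(n^2)$ with high probability. The natural tool is the local weak limit of the determinantal hypertree from \cite{meszaros2022local}: bootstrap percolation on the rooted limit complex can be encoded as a recursive fixed-point equation and analyzed in local terms, and a standard second-moment/concentration argument should transfer the local statement to the finite complex $T_n$. If the naive bootstrap saturation turns out to be insufficient, one would enlarge the closure rule, for instance by allowing chains of face relations or simultaneous Nielsen transformations on batches of relations; this is the place where the cochain-graphon machinery of this paper, applied beyond its abelian consequence Corollary~\ref{cory2}, would likely enter, by ruling out large regular structures in $T_n$ that could serve as persistent obstructions to closure.

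The main obstacle combines two independent difficulties. First, probabilistically, the determinantal hypertree sits in a near-critical regime (roughly three faces per edge), and it is plausible that naive bootstrap percolation leaves a 2-core of positive density; showing that a more sophisticated closure rule shrinks this 2-core to $o(n^2)$ is a genuinely new percolation/collapse statement for random 2-complexes. Second, and more subtle, the non-abelian Tietze step can fail even when the bootstrap succeeds: substitutions made at earlier steps may introduce cancelling copies of $g_{e_i}$ into the (modified) relation of $f_i$, reducing its net exponent to zero and blocking the elimination. This phenomenon has no counterpart in the abelianization, which corresponds to the boundary matrix $\partial_2$—invertible over $\mathbb{Q}$ since $H_1(T_n,\mathbb{Z})$ is finite—and resolving it likely requires choosing the elimination order to be compatible with a $\mathbb{Z}$-triangulation of $\partial_2$ or replacing individual Tietze steps by Nielsen transformations on groups of relations. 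This algebraic obstruction is, in my view, the main place where an essentially new combinatorial group-theoretic input is needed on top of the graph-limit tools developed in the paper.
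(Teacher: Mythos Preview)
The statement you are attempting to prove is listed in the paper as a \emph{Conjecture}, not as a theorem. It appears in the ``Open questions'' section and the paper offers no proof; indeed, Corollary~\ref{cory2} only handles the abelianization $H_1(T_n,\mathbb{Z})$, and the passage to the non-abelian $\pi_1(T_n)$ is precisely what the author flags as open. So there is no proof in the paper to compare your attempt against.

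Your write-up is not a proof either, and you essentially acknowledge this yourself. It is a research outline: you propose bootstrap percolation starting from a spanning tree, followed by Tietze eliminations, and then explicitly identify two unresolved obstacles --- a percolation/collapse statement in a near-critical regime that you concede may fail for naive bootstrap, and a non-abelian cancellation phenomenon in the Tietze step that has no analogue in the abelian case treated by the paper. Neither obstacle is resolved, and you do not claim otherwise. The cochain-graphon and large-deviation machinery developed in the paper is tied to \emph{abelian} coefficients (the cocycle condition $f(u,v)+f(v,w)+f(w,u)=0$ is what makes the graphon framework work), so invoking it ``beyond its abelian consequence'' is a hope rather than a step. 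In short, this is a reasonable sketch of where one might look, but it is not a proof, and the paper itself regards the statement as open.
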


Let us define the $2$-dimensional Linial-Meshulam complex $X_2\left(n,\frac{c}n\right)$ as a random complex on $n$ vertices with a complete $1$-skeleton such that each triangular face is present with probability $\frac{c}n$ independently. Linial and Peled~\cite{linial2016phase} proved that for any fixed $c>0$, the normalized Betti numbers $n^{-2} \dim H_1\left(X_2\left(n,\frac{c}n\right),\mathbb{R}\right)$ converge in probability to a constant. It is conjectured that the same is true in positive characteristic, that is, for any prime $p$, $n^{-2} \dim H_1\left(X_2\left(n,\frac{c}n\right),\mathbb{F}_p\right)$ converge in probability to the same constant. {\L}uczak and Peled~\cite{rc5} had the even stronger conjecture that $H_1\left(X_2\left(n,\frac{c}n\right),\mathbb{Z}\right)$ has trivial torsion with high probability away from the critical density $c_*$. It might be possible to use the methods of the present paper to determine the limit of $n^{-2} \log_p \mathbb{E}|Z_1(X_2\left(n,\frac{c}n\right),\mathbb{F}_p)|$. However, this will not give us the typical behavior of $\dim Z_1(X_2\left(n,\frac{c}n\right),\mathbb{F}_p)$, because due to the fluctuations of $\dim Z_1(X_2\left(n,\frac{c}n\right),\mathbb{F}_p)$, the quantity $n^{-2} \log_p\mathbb{E}|Z_1(X_2\left(n,\frac{c}n\right),\mathbb{F}_p)|$ will be much larger than $n^{-2}\mathbb{E}\dim Z_1(X_2\left(n,\frac{c}n\right),\mathbb{F}_p)$. See for example~\cite[Section 2.1]{ayre2020satisfiability}, where the same phenomenon is discussed in a slightly different setting. Thus, to understand the mod $p$ Betti numbers of the $2$-dimensional Linial-Meshulam complexes, it seems like we need additional ideas.

It would be also interesting to develop a higher dimensional analogue of our method, and generalize our results to $d$-dimensional determinantal hypertrees and $1$-out $d$-complexes.

\medskip 

\textbf{Acknowledgement:} 
The author was supported by the NKKP-STARTING 150955 project and the Marie Sk\l{}odowska-Curie Postdoctoral Fellowship "RaCoCoLe".








\section{Preliminaries on cochain graphons}

\subsection{Convolution of cochain graphons}


Given $V,W\in L^{\infty}([0,1]^2)$, their operator product $V\circ W$, which is a bounded measurable function from $[0,1]^2$ to $\mathbb{R}$, is defined as
\[(V\circ W)(x,y)=\int_0^1 V(x,t)W(t,y)dt.\]

 Given two cochain graphons $V^G,W^G$ their convolution $V^G*W^G$ is defined as the function $U^G\in L^{\infty}(S_G)$ satisfying
 \[U^g=\sum_{h\in G} V^h\circ W^{g-h}.\]

 The following lemma motivates the definition of $V^G*W^G$. The proof of this lemma is straightforward from the definitions.

 \begin{lemma}\label{cochainconv}
Let $f\in C^1(K_n,G)$. For $u,w\in [n]$ and $g\in G$, let
\[P(u,w,g)=\left|\{v\in [n]\setminus\{u,w\}\,:\, f(u,v)+f(v,w)=g\}\right|.\]
Then for all $(x,y)\in (0,1]^2$ and $g\in G$, we have
\[(W^G_f*W^G_f)(x,y,g)=\frac{P(\lceil nx\rceil, \lceil ny \rceil, g)}{n}.\]
\end{lemma}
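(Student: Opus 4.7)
The plan is simply to unwind the definitions. Fix $(x,y) \in (0,1]^2$ and $g \in G$, and write $u = \lceil nx \rceil$ and $w = \lceil ny \rceil$. First I would apply the definition of convolution:
\[(W^G_f * W^G_f)(x,y,g) \;=\; \sum_{h \in G} \int_0^1 W^h_f(x,t)\, W^{g-h}_f(t,y)\, dt,\]
and then split the $t$-integral according to the value of $v := \lceil nt \rceil \in [n]$. Each slice $\{t \in [0,1] : \lceil nt \rceil = v\}$ is an interval of Lebesgue measure $1/n$, and on this slice the three ceilings $\lceil nx \rceil$, $\lceil nt \rceil$, $\lceil ny \rceil$ are constant, hence so are both factors of the integrand.

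The second step is the key observation. For a fixed $v \in [n]$, since the cochain values $f(u,v)$ and $f(v,w)$ are unique group elements whenever they are defined, the factor $W^h_f(x,t)$ vanishes unless $u \neq v$ and $h = f(u,v)$, and similarly $W^{g-h}_f(t,y)$ vanishes unless $v \neq w$ and $g-h = f(v,w)$. Consequently the product $W^h_f(x,t)\, W^{g-h}_f(t,y)$ is nonzero for at most one value of $h \in G$, and summing over all $h$ collapses the two constraints on $h$ into a single condition on $f$:
\[\sum_{h \in G} W^h_f(x,t)\, W^{g-h}_f(t,y) \;=\; \begin{cases} 1 & \text{if } v \notin \{u,w\} \text{ and } f(u,v) + f(v,w) = g, \\ 0 & \text{otherwise,} \end{cases}\]
where $v = \lceil nt \rceil$.

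Finally, integrating over $t \in [0,1]$ and using that each slice contributes measure $1/n$, the right-hand side is $\frac{1}{n}$ times the cardinality of $\{v \in [n] \setminus \{u,w\} : f(u,v) + f(v,w) = g\}$, which is exactly $P(u,w,g)/n$. As the author already indicates, this is purely a bookkeeping exercise; the only minor point is the measure-zero ambiguity of $\lceil n\cdot\rceil$ at endpoints of the partition intervals, which does not affect the integral. I do not anticipate any genuine obstacle.
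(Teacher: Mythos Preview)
Your proposal is correct and is exactly the ``straightforward from the definitions'' computation the paper alludes to without writing out. There is nothing to add; the only cosmetic remark is that the slices $\{t\in(0,1]:\lceil nt\rceil=v\}=((v-1)/n,v/n]$ are genuinely half-open intervals of length $1/n$, so there is no endpoint ambiguity to worry about at all.
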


For a function $W:[0,1]^2\to \mathbb{R}$, let $W^T:[0,1]^2\to \mathbb{R}$ be defined as $W^T(x,y)=W(y,x)$.

\begin{lemma}\label{ConvinW00} Assume that $W^G\in \mathcal{W}^G_{00}$, then $W^G*W^G\in \mathcal{W}^G_{00}$.
\end{lemma}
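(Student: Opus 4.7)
The plan is to verify that $U^G := W^G * W^G$ satisfies each of the three defining conditions of $\mathcal{W}^G_{00}$: (i) symmetry $U^g(x,y) = U^{-g}(y,x)$, (ii) $0 \le U^g \le 1$ almost everywhere, and (iii) $\sum_g U^g(x,y) = 1$ for almost every $(x,y) \in [0,1]^2$. Each of these reduces to an elementary manipulation of the convolution formula $U^g = \sum_{h \in G} W^h \circ W^{g-h}$ once the right substitution is chosen.

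I would start with the sum-to-one condition, since it is the most substantive and automatically yields the nontrivial upper bound. By definition,
\[
\sum_{g \in G} U^g(x,y) = \sum_{g \in G} \sum_{h \in G} \int_0^1 W^h(x,t)\, W^{g-h}(t,y) \, dt.
\]
All summands are nonnegative and bounded, so interchange of the finite sums and the integral is unproblematic. For fixed $h$, the substitution $k = g-h$ collapses the inner $g$-sum to $\sum_{k \in G} W^k(t,y)$, which equals $1$ for a.e.\ $(t,y)$ by the hypothesis $W^G \in \mathcal{W}^G_{00}$. Then $\sum_{h} W^h(x,t) = 1$ for a.e.\ $(x,t)$, so a Fubini argument gives $\sum_g U^g(x,y) = 1$ for a.e.\ $(x,y)$. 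Combined with $U^g \ge 0$ (obvious, since $W^h, W^{g-h} \ge 0$), this immediately forces $U^g \le 1$ a.e., establishing (ii).

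For symmetry, I would use the identity $W^h(x,t) = W^{-h}(t,x)$ and $W^{g-h}(t,y) = W^{h-g}(y,t)$ inside the definition of $U^g(x,y)$, then reindex via $k = h-g$:
\[
U^g(x,y) = \sum_{h} \int_0^1 W^{-h}(t,x) W^{h-g}(y,t)\, dt = \sum_{k} \int_0^1 W^k(y,t)\, W^{-g-k}(t,x)\, dt = U^{-g}(y,x),
\]
which is exactly the required symmetry. Boundedness in $L^\infty$ follows from (ii).

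The only mildly delicate point is tracking the almost-everywhere qualifiers in step one. The equality $\sum_h W^h(\cdot,\cdot) = 1$ is an a.e.\ statement on $[0,1]^2$, and one must invoke Fubini to conclude that for a.e.\ $x$ the identity holds for a.e.\ $t$ (and similarly for the $(t,y)$-coordinates), so that the inner integral can be evaluated pointwise in $t$ for a.e.\ fixed pair $(x,y)$. This is a routine measure-theoretic check rather than a real obstacle. Everything else is bookkeeping with the group operation on $G$.
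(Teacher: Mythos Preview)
Your proof is correct and follows essentially the same approach as the paper: the paper likewise checks nonnegativity, the sum-to-one identity via reindexing the double sum over $G$, and symmetry via the substitution $h\mapsto g-h$ together with $W^g(x,y)=W^{-g}(y,x)$. The only difference is cosmetic---the paper expresses the symmetry step using the transpose notation $(V\circ W)^T=W^T\circ V^T$, and it leaves implicit the a.e.\ bookkeeping that you spell out.
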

\begin{proof}
It is clear that $W^G*W^G$ is non-negative. 

We have
\[\sum_{g\in G} (W^G*W^G)^g=\sum_{g\in G}\sum_{h\in G} W^h\circ W^{g-h}=\left(\sum_{g\in G}W^g\right)\circ \left(\sum_{g\in G}W^g\right)=\mathbbm{1}\circ \mathbbm{1}=\mathbbm{1},\]
where is $\mathbbm{1}$ is the constant $1$ function on $[0,1]^2$.

 To prove the symmetry of $W^G*W^G$, observe that
\[((W^G*W^G)^g)^T=\sum_{h\in G} (W^h\circ W^{g-h})^T=\sum_{h\in G} (W^{g-h})^T\circ (W^{h})^T=\sum_{h\in G} W^{h-g}\circ W^{-h}=(W^G*W^G)^{-g}.\]
\end{proof}
\subsection{A few continuity results}

In this section, we will rely on a few simple lemmas proved in \cite{meszaros2024bounds}. 


(Note that in \cite{meszaros2024bounds}, elements of $L^\infty([0,1]^2)$ were often assumed to be symmetric, but this assumption is not really used. Thus, all the results cited in this section are true without assuming the symmetry of the functions $W:[0,1]^2\to\mathbb{R}$. Also, for certain results in \cite{meszaros2024bounds} the boundedness assumption can be replaced by integrability.)


\begin{lemma}\label{limitofproduct}
(\cite[Lemma 2.2]{meszaros2024bounds})Let $V,V_1,V_2,\dots$ and $W,W_1,W_2\dots$ be elements of $L^{\infty}([0,1]^2)$. Assume that there is a constant $C$ such that $\|V_n\|_\infty\le C$ for all $n$. If $\lim_{n\to\infty} V_n=V$ and $\lim_{n\to\infty} W_n=W$ in the cut norm, then $\lim_{n\to\infty} V_n\circ W_n=V\circ W$ in the $L^1$ norm. 
\end{lemma}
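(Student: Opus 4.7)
The plan is to use the standard telescoping decomposition
\[V_n \circ W_n - V \circ W = V_n \circ (W_n - W) + (V_n - V) \circ W,\]
and show that each summand converges to $0$ in $L^1([0,1]^2)$. The result then follows from the triangle inequality for the $L^1$ norm.

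The key auxiliary estimate I would establish first is the following inequality: for any $U \in L^1([0,1]^2)$ and any $Z \in L^\infty([0,1]^2)$,
\[\|U \circ Z\|_{L^1} \le 4\|U\|_\square \|Z\|_\infty \qquad \text{and} \qquad \|Z \circ U\|_{L^1} \le 4\|U\|_\square \|Z\|_\infty.\]
For the first inequality, fix $y\in[0,1]$ and consider $\int_0^1 \bigl|\int_0^1 U(x,t) Z(t,y)\, dt\bigr|\, dx$. Replacing the outer absolute value with a $\{\pm 1\}$-valued sign $\sigma_y(x)$, this becomes $\int\!\!\int U(x,t) \sigma_y(x) Z(t,y)\, dx\, dt$, which, up to the factor $\|Z\|_\infty$, has the form $\int\!\!\int U(x,t)\, g(x)\, f(t)\, dx\, dt$ with $\|f\|_\infty,\|g\|_\infty \le 1$. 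The classical layer-cake decomposition, writing $f = f_+ - f_-$, $g = g_+ - g_-$ with $f_\pm, g_\pm \in [0,1]$, and then $f_\pm = \int_0^1 \mathbf{1}_{\{f_\pm > s\}}\, ds$ and similarly for $g_\pm$, reduces the integral to a convex combination of four terms of the form $\int_{S\times T} U$, each bounded by $\|U\|_\square$. Integrating over $y\in[0,1]$ and using the transpose-invariance of $\|\cdot\|_\square$ then gives both halves of the auxiliary estimate.

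With the auxiliary bound in hand, I would apply it with $Z = W$ (which lies in $L^\infty$) and $U = V_n - V$ to get
\[\|(V_n - V) \circ W\|_1 \le 4\|V_n - V\|_\square \|W\|_\infty \xrightarrow{n \to \infty} 0,\]
and with $Z = V_n$ (uniformly bounded in $L^\infty$ by $C$) and $U = W_n - W$ to get
\[\|V_n \circ (W_n - W)\|_1 \le 4 C\, \|W_n - W\|_\square \xrightarrow{n \to \infty} 0.\]
Summing the two estimates finishes the proof.

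The only nontrivial step is the auxiliary bound, which is a well-known folklore fact in dense graph limit theory; the factor $4$ is the cost of decomposing a bounded real function into a difference of two $[0,1]$-valued functions on each coordinate. Everything else is bookkeeping. Note in particular that no uniform $L^\infty$-bound on the $W_n$ is needed: only the bound $\|V_n\|_\infty \le C$ and the finiteness of $\|W\|_\infty$ are used, precisely matching the hypotheses of the lemma.
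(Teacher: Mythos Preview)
Your proof is correct. The paper does not actually prove this lemma; it merely cites it from \cite[Lemma~2.2]{meszaros2024bounds} and moves on. What you have written is precisely the standard argument behind that cited result: the telescoping identity $V_n\circ W_n - V\circ W = V_n\circ(W_n-W) + (V_n-V)\circ W$ together with the folklore bound $\|Z\circ U\|_1,\ \|U\circ Z\|_1 \le 4\|Z\|_\infty\|U\|_\square$, which in turn is equivalent to the familiar comparison $\sup_{\|f\|_\infty,\|g\|_\infty\le 1}\bigl|\int U(x,t)f(t)g(x)\,dx\,dt\bigr|\le 4\|U\|_\square$ (see, e.g., Lov\'asz's book, Lemma~8.10). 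Your remark that only $\|V_n\|_\infty\le C$ and $\|W\|_\infty<\infty$ are used, and not a uniform bound on the $W_n$, is also accurate and matches the hypotheses exactly.
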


\begin{lemma}\label{limitofconvolution}
Let $V^G,V_1^G,V_2^G,\dots$ and $W^G,W_1^G,W_2^G\dots$ be elements of $\mathcal{W}^G_0$. Assume that there is a constant $C$ such that $\|V_n^G\|_\infty\le C$ for all $n$. If $\lim_{n\to\infty} V_n^G=V^G$ and $\lim_{n\to\infty} W_n^G=W^G$ in the cut norm, then $\lim_{n\to\infty} V_n^G* W_n^G=V^G* W^G$ in the $L^1$ norm. 
\end{lemma}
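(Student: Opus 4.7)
The plan is to reduce the claim to Lemma~\ref{limitofproduct} by treating each $G$-slice of the convolution separately, and then summing over the finite group $G$.

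First I would unpack the relevant norms. By definition of the cut norm on $\mathcal{W}^G$, we have $\|U^G\|_\square = \sum_{g\in G}\|U^g\|_\square$, and the $L^1$ norm on $S_G$ (product of Lebesgue and counting measures) is $\|U^G\|_1 = \sum_{g\in G}\|U^g\|_1$. Because $G$ is finite, the cut-norm convergence $V_n^G\to V^G$ is equivalent to the scalar cut-norm convergence $\|V_n^g-V^g\|_\square\to 0$ for every $g\in G$, and analogously for $W_n^G\to W^G$.

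Next I would expand the difference of convolutions slice-wise, using the definition $(V^G*W^G)^g = \sum_{h\in G} V^h\circ W^{g-h}$:
\[(V_n^G*W_n^G)^g - (V^G*W^G)^g \;=\; \sum_{h\in G}\bigl(V_n^h\circ W_n^{g-h} \,-\, V^h\circ W^{g-h}\bigr).\]
Since $V_n^G\in \mathcal{W}_0^G$ we automatically have $0\le V_n^h\le 1$ pointwise, so each sequence of slices $\{V_n^h\}_n$ is uniformly bounded in $L^\infty$ by $1$ (so in particular the hypothesis of Lemma~\ref{limitofproduct} is met, regardless of the stated constant $C$). Combined with cut-norm convergence of the individual slices $V_n^h\to V^h$ and $W_n^{g-h}\to W^{g-h}$, Lemma~\ref{limitofproduct} yields $V_n^h\circ W_n^{g-h}\to V^h\circ W^{g-h}$ in $L^1$ for each fixed pair $(g,h)\in G\times G$. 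Since the sum over $h\in G$ is finite, the triangle inequality gives $L^1$ convergence of $(V_n^G*W_n^G)^g$ to $(V^G*W^G)^g$ for each $g$, and summing the finitely many slices over $g\in G$ gives the desired $L^1$ convergence on $S_G$.

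There is no substantive obstacle; the lemma is essentially a bookkeeping extension of Lemma~\ref{limitofproduct} from the scalar setting to the $G$-graded setting, powered by the finiteness of $G$. The only routine verification is that the uniform $L^\infty$ bound required by Lemma~\ref{limitofproduct} is inherited by the individual slices $V_n^h$, which is immediate from $\mathcal{W}_0^G\subset[0,1]$ pointwise.
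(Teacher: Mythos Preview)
Your proof is correct and follows essentially the same approach as the paper: apply Lemma~\ref{limitofproduct} to each pair of slices $V_n^h$, $W_n^{g-h}$ and then use the finiteness of $G$ to sum. You simply supply more of the routine details (unpacking the norms and the $L^\infty$ bound) than the paper's two-line proof.
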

\begin{proof}
By Lemma~\ref{limitofproduct}, we have $\lim_{n\to\infty} V_n^g\circ W_n^{g-h}=V^g\circ W^{g-h}$ in the $L^1$ norm for all $g,h\in G$. Thus, the statement follows.
\end{proof}

Given two measurable functions $V,W:[0,1]^2\to\mathbb{R}$, we define
\[\langle V,W\rangle=\int_{[0,1]^2} V(x,y)W(x,y) dxdy,\]
whenever the integral above exists.


\begin{lemma}\label{lemmaangle}
 (\cite[Lemma 2.4]{meszaros2024bounds}) Let $V,V_1,V_2,\dots$ be elements of $L^\infty([0,1]^2)$ and $W,W_1,W_2\dots$ be elements of $L^1([0,1]^2)$. Assume that there is a constant~$C$ such that $\|V\|_\infty\le C$ and $\|V_n\|_\infty\le C$ for all $n$. If $\lim_{n\to\infty} V_n=V$ in the cut norm and $\lim_{n\to\infty} W_n=W$ in the $L^1$ norm, then $\lim_{n\to\infty} \langle V_n,W_n\rangle =\langle V,W\rangle$.
\end{lemma}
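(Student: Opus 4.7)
The plan is to reduce everything to the $L^\infty$–$L^1$ duality together with the defining property of the cut norm on rectangles. Write
\[\langle V_n,W_n\rangle-\langle V,W\rangle=\langle V_n-V,W\rangle+\langle V,W_n-W\rangle+\langle V_n-V,W_n-W\rangle.\]
The middle and last terms are easy: by Hölder's inequality, $|\langle V,W_n-W\rangle|\le\|V\|_\infty\|W_n-W\|_1\le C\|W_n-W\|_1$, and similarly $|\langle V_n-V,W_n-W\rangle|\le 2C\|W_n-W\|_1$; both tend to $0$ by the $L^1$ convergence of $W_n$. So the whole problem is to show that $\langle V_n-V,W\rangle\to 0$.

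For this, set $U_n=V_n-V$. Then $\|U_n\|_\infty\le 2C$, and by hypothesis $\|U_n\|_\square\to 0$. I would first verify the claim for $W$ being an indicator of a measurable rectangle: if $W=\mathbbm{1}_{S\times T}$ then $|\langle U_n,W\rangle|=\bigl|\int_{S\times T}U_n\bigr|\le \|U_n\|_\square\to 0$. By linearity, the same holds for any finite linear combination $\psi=\sum_{i=1}^{k}c_i\mathbbm{1}_{S_i\times T_i}$, with $|\langle U_n,\psi\rangle|\le\bigl(\sum_i|c_i|\bigr)\|U_n\|_\square\to 0$.

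Now extend to arbitrary $W\in L^1([0,1]^2)$ by density. Given $\varepsilon>0$, choose a step function $\psi$ of the above form with $\|W-\psi\|_1<\varepsilon$ (such functions are dense in $L^1([0,1]^2)$). Then
\[|\langle U_n,W\rangle|\le|\langle U_n,\psi\rangle|+|\langle U_n,W-\psi\rangle|\le|\langle U_n,\psi\rangle|+2C\varepsilon,\]
and the first summand tends to $0$ by the previous step, so $\limsup_{n\to\infty}|\langle U_n,W\rangle|\le 2C\varepsilon$. Since $\varepsilon$ was arbitrary, $\langle U_n,W\rangle\to 0$, which combined with the earlier two bounds finishes the proof.

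The only mildly delicate point is the density of rectangle step functions in $L^1([0,1]^2)$, which is standard (approximate first by simple functions on general measurable sets, then approximate each such set by a finite disjoint union of measurable rectangles using the fact that the product $\sigma$-algebra is generated by measurable rectangles). No other obstacle arises; the rest is bookkeeping with Hölder and the $\|U_n\|_\square\to 0$ assumption.
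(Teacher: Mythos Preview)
Your proof is correct. The paper does not actually prove this lemma; it merely cites it from \cite[Lemma 2.4]{meszaros2024bounds}, so there is no in-paper argument to compare against. Your approach---splitting off the two Hölder-controlled terms and then handling $\langle V_n-V,W\rangle$ by approximating $W$ in $L^1$ by rectangle step functions, on which the cut norm controls the pairing directly---is the standard and natural route, and every step is justified.
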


Given two measurable functions $V^G,W^G: S_G\to\mathbb{R}$, we define
\[\langle V^G,W^G\rangle=\sum_{g\in G} \langle V^g,W^g\rangle,\]
whenever the right hand side exists.

As a direct consequence of Lemma~\ref{lemmaangle}, we obtain the following statement.

\begin{lemma}\label{lemmaangleWG}
 Let $V^G,V^G_1,V^G_2,\dots$ be elements of $L^\infty(S_G)$ and $W^G,W^G_1,W^G_2\dots$ be elements of $L^1(S_G)$. Assume that there is a constant~$C$ such that $\|V^G\|_\infty\le C$ and $\|V_n^G\|_\infty\le C$ for all $n$. If $\lim_{n\to\infty} V_n^G=V^G$ in the cut norm and $\lim_{n\to\infty} W_n^G=W^G$ in the $L^1$ norm, then $\lim_{n\to\infty} \langle V_n^G,W_n^G\rangle =\langle V^G,W^G\rangle$.
\end{lemma}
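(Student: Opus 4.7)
The plan is to reduce directly to the scalar-valued Lemma~\ref{lemmaangle} coordinate-by-coordinate in $g\in G$, exploiting that $G$ is finite and that all three relevant norms on $S_G$ split as sums or suprema over $g$.

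First I would note that by the very definitions of the pairing and the norms, we have the coordinatewise identities
\[\langle V^G_n,W^G_n\rangle = \sum_{g\in G}\langle V^g_n,W^g_n\rangle,\qquad \|V^G_n-V^G\|_\square = \sum_{g\in G}\|V^g_n-V^g\|_\square,\]
and $\|W^G_n-W^G\|_1 = \sum_{g\in G}\|W^g_n-W^g\|_1$ (from the product-of-Lebesgue-and-counting-measure structure on $S_G$). Likewise $\|V^g\|_\infty\le\|V^G\|_\infty\le C$ and $\|V^g_n\|_\infty\le C$ for every $g\in G$ and every $n$.

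Next, from these identities the hypotheses transfer coordinatewise: for every $g\in G$ the sequence $V^g_n$ converges to $V^g$ in the cut norm and $W^g_n$ converges to $W^g$ in $L^1$, while staying uniformly bounded by $C$ in $L^\infty$. Applying Lemma~\ref{lemmaangle} for each fixed $g$ yields $\langle V^g_n,W^g_n\rangle\to\langle V^g,W^g\rangle$ as $n\to\infty$.

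Finally, since $G$ is finite, the sum $\langle V^G_n,W^G_n\rangle=\sum_{g\in G}\langle V^g_n,W^g_n\rangle$ converges to $\sum_{g\in G}\langle V^g,W^g\rangle=\langle V^G,W^G\rangle$. There is no serious obstacle; the only thing worth double-checking is the coordinatewise splitting of the cut norm and $L^1$ norm on $S_G$, and the uniform bound $\|V^g_n\|_\infty\le\|V^G_n\|_\infty\le C$, both of which are immediate from the definitions given in the paper.
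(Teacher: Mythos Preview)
Your proposal is correct and is exactly the argument the paper has in mind: the paper does not spell out a proof but simply states that Lemma~\ref{lemmaangleWG} is ``a direct consequence of Lemma~\ref{lemmaangle}'', which is precisely your coordinate-by-coordinate reduction over the finitely many $g\in G$. The splitting of the pairing, the cut norm, and the $L^1$ norm that you use are all literally the definitions given in the paper, so there is nothing further to check.
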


Given a measurable function $f:\mathbb{R}\to\mathbb{R}$ and a cochain kernel $W^G\in \mathcal{W}^G$, $f\circ W^G$ is a symmetric measurable function from $S_G$ to $\mathbb{R}$. If $f$ is bounded, then $f\circ W^G\in \mathcal{W}^G$.

The proof of the next lemma is straightforward.

\begin{lemma}\label{Lipschitz}
Let $W_n^G\in L^{1}(S_G)$. Assume that $\lim_{n\to\infty} W_n^G=W^G$ in $L^1$ norm. If $f:\mathbb{R}\to\mathbb{R}$ is a bounded Lipschitz function, then $\lim_{n\to\infty} f\circ W_n^G=f\circ W^G$ in $L^1$ norm. 
\end{lemma}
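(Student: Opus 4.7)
The plan is to use the Lipschitz property of $f$ pointwise and then integrate. Concretely, write $L$ for a Lipschitz constant of $f$ and $M$ for its sup norm. First, I would observe that for every $z=(x,y,g)\in S_G$,
\[
\bigl|f(W_n^G(z))-f(W^G(z))\bigr|\le L\cdot \bigl|W_n^G(z)-W^G(z)\bigr|.
\]
Integrating this inequality over $S_G$ (which has finite measure since $G$ is finite) gives
\[
\|f\circ W_n^G-f\circ W^G\|_1\le L\,\|W_n^G-W^G\|_1\xrightarrow{n\to\infty} 0.
\]
The boundedness of $f$ is only needed to make sure both $f\circ W_n^G$ and $f\circ W^G$ are honest elements of $L^1(S_G)$ (in fact of $L^\infty(S_G)$, bounded by $M$), so that the statement of $L^1$ convergence makes sense even though $W_n^G$ and $W^G$ may be unbounded.

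Since the whole argument is a one-line estimate, there is essentially no obstacle; I would only remark that symmetry and the defining relation $W^G(x,y,g)=W^G(y,x,-g)$ are automatically preserved under composition with $f$, so the symmetry properties of the cochain-kernel framework cause no trouble.
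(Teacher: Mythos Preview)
Your proof is correct and is exactly the straightforward argument the paper has in mind (the paper does not spell out a proof, simply noting that it is immediate). The pointwise Lipschitz estimate followed by integration is the intended route.
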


\section{The proof of Theorem~\ref{thm1} and its corollaries}
\subsection{Bounding the number of cocycles}

For a simplicial complex $K$, let $K(i)$ be the set of $i$-dimensional faces of~$K$.

Let $Y\subset {{[n]}\choose {3}}$. For any $\tau\in {{[n]}\choose {2}}$, we define
\[t_Y(\tau)=|\{\sigma\in Y\,:\, \tau\subset \sigma\}|.\]

The next lemma was proved in \cite[Lemma 3.2]{meszaros2024bounds}.
\begin{lemma}\label{upperb}
Let $Y\subset {{[n]}\choose {3}}$, then
\[\log \mathbb{P}(T_n(2)\subset Y)\le (n-2)\log (n)+\left(1-\frac{2}n\right)\sum_{\tau\in {{[n]}\choose{2}}} \log\left(\frac{t_Y(\tau)}{n}\right).\]
\end{lemma}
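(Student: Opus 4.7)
I would reduce the probability to a pseudodeterminant via Kalai's higher-dimensional matrix-tree theorem, and then bound this pseudodeterminant by a Hadamard-type inequality adapted to the rank deficiency of the Laplacian. Concretely, letting $\partial_Y:\mathbb{R}^Y\to\mathbb{R}^{\binom{[n]}{2}}$ be the signed incidence map sending a triangle in $Y$ to the signed sum of its three edges, and using $|H_1(A,\mathbb{Z})|=|\det\hat{\partial}_A|$ in any $\mathbb{Z}$-basis of the cycle space for a hypertree $A$, Cauchy--Binet produces
\[
\sum_{A\subset Y,\,A\text{ hypertree}}|H_1(A,\mathbb{Z})|^2 \;=\; \frac{\det{}'\!\bigl(\partial_Y\partial_Y^T\bigr)}{n^{n-2}},
\]
where $\det{}'$ denotes the pseudodeterminant (product of nonzero eigenvalues) and the normalization $n^{n-2}$ is the Gram determinant of the integer cycle basis---equal by Kirchhoff's theorem to the number of spanning trees of $K_n$. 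Dividing by the Kalai partition function $\sum_S|H_1(S,\mathbb{Z})|^2=n^{\binom{n-2}{2}}$ gives
\[
\mathbb{P}(T_n(2)\subset Y) \;=\; \frac{\det{}'\!\bigl(\partial_Y\partial_Y^T\bigr)}{n^{\binom{n-1}{2}}}.
\]

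For the matrix inequality, set $L:=\partial_Y\partial_Y^T$. It is positive semidefinite of rank $r=\binom{n-1}{2}$ on $\mathbb{R}^{\binom{[n]}{2}}$, with diagonal $L_{\tau\tau}=t_Y(\tau)$ and kernel equal to the $(n-1)$-dimensional cut space; by the $S_n$-symmetry of this kernel, the associated orthogonal projection $P$ has constant diagonal $2/n$. For every $t>0$ one has the identity $\det{}'(L)=t^{-(n-1)}\det(L+tP)$; since $L+tP$ is positive definite with diagonal $t_Y(\tau)+2t/n$, I would apply Hadamard's inequality to obtain
\[
\det{}'(L)\;\le\;t^{-(n-1)}\prod_{\tau}\!\Bigl(t_Y(\tau)+\tfrac{2t}{n}\Bigr),
\]
and then minimize over $t$ (the first-order condition reads $\sum_{\tau}(t_Y(\tau)+2t/n)^{-1}=(n-1)/t$). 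Translating back through the reduction step yields a bound of the form in the lemma, with the $(n-2)\log n$ term coming from the logarithm of the Gram/spanning-tree factor $n^{n-2}$ and the exponent $1-2/n=r/\binom{n}{2}$ reflecting the codimension of the kernel in $\mathbb{R}^{\binom{[n]}{2}}$.

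The main obstacle is extracting the clean exponents $(n-2)\log n$ and $1-2/n$ from the implicit optimization in $t$. When all $t_Y(\tau)$ equal a common value $c$, the optimized Hadamard bound collapses to $r\log(cn/(n-2))$, and the right-hand side of the lemma is a slight worsening of this that separates the $n$-dependent and $Y$-dependent parts cleanly. In the non-uniform case, the cleanest path I know is to fix a spanning tree of $K_n$ so that the cycle space identifies with the non-tree-edge coordinates, apply Hadamard to the $r\times r$ compression of $L$ in this basis, and then symmetrize by averaging over the choice of spanning tree so that the per-non-tree-edge exponent $1$ is promoted to the symmetric per-edge exponent $r/\binom{n}{2}=1-2/n$.
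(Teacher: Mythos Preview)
The paper does not prove this lemma here; it quotes it from \cite[Lemma~3.2]{meszaros2024bounds}. Your reduction is exactly the right one: with $M$ the $\binom{n-1}{2}\times|Y|$ matrix obtained by restricting $\partial_Y$ to the rows indexed by edges outside a fixed spanning tree $T$ of $K_n$, Cauchy--Binet and the determinantal description of $|H_1|$ give
\[
\mathbb{P}(T_n(2)\subset Y)=\frac{\det(MM^T)}{n^{\binom{n-2}{2}}},\qquad (MM^T)_{\tau\tau}=t_Y(\tau).
\]
Your second route---Hadamard on $MM^T$ followed by averaging over spanning trees---goes through cleanly and produces the stated inequality \emph{with equality of constants}. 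Indeed, Hadamard gives $\log\mathbb{P}\le -\binom{n-2}{2}\log n+\sum_{\tau\notin T}\log t_Y(\tau)$ for every $T$; averaging uniformly over spanning trees of $K_n$ and using that each edge is absent from a uniform spanning tree with probability $1-2/n$ yields
\[
\log\mathbb{P}\le -\binom{n-2}{2}\log n+\Bigl(1-\tfrac{2}{n}\Bigr)\sum_{\tau}\log t_Y(\tau),
\]
and a short computation (using $\binom{n-1}{2}-\binom{n-2}{2}=n-2$ and $(1-2/n)\binom{n}{2}=\binom{n-1}{2}$) shows this is exactly the right-hand side of the lemma. This is presumably the argument in the cited paper as well.

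Your first route via $\det'(L)\le t^{-(n-1)}\prod_\tau(t_Y(\tau)+2t/n)$ is correct as an inequality, but as you note the optimization in $t$ does not unwind to the clean form $(1-2/n)\sum_\tau\log t_Y(\tau)$ without further work; since the spanning-tree averaging argument already delivers the exact statement, there is no need to pursue it.
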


 For $f\in C^1(K_n,G)$, let us consider $Y=Y_f$ defined as
\begin{align}
\label{eqYdef}Y&=\left\{\{u,v,w\}\in {{[n]}\choose{3}}\,:\,f(u,v)+f(v,w)+f(w,u)=0\right\}\\
&=\left\{\{u,v,w\}\in {{[n]}\choose{3}}\,:\,f(u,v)+f(v,w)=f(u,w)\right\}.\nonumber
\end{align}

Clearly,
\begin{equation}\label{cohgraph}
 \mathbb{P}\left(f\in Z^1(T_n,G)\right)=\mathbb{P}\left(T_n(2)\subset Y \right).
\end{equation}

Combining this with Lemma~\ref{upperb}, we see that
\begin{equation}\label{logPbound}\log \mathbb{P}\left(f\in Z^1(T_n,G)\right)\le (n-2)\log (n)+\left(1-\frac{2}n\right)\sum_{\tau\in {{[n]}\choose{2}}} \log\left(\frac{t_Y(\tau)}{n}\right).
\end{equation}

With the notations of Lemma~\ref{cochainconv},
\[t_Y(\{u,w\})=P(u,w,f(u,w))\quad\text{ for all }\{u,w\}\in {{[n]}\choose{2}}.\]

 Using Lemma~\ref{cochainconv}, we see that
\begin{align*}\log\left(\frac{t_Y(\{u,w\})}n\right)&=n^2\int_{\frac{u-1}n}^{\frac{u}n}\int_{\frac{w-1}n}^{\frac{w}n} \log\left((W^G_f*W^G_f)(x,y,f(u,w))\right) dx dy\\&=n^2\sum_{g\in G} \int_{\frac{u-1}n}^{\frac{u}n}\int_{\frac{w-1}n}^{\frac{w}n} W^G_f(x,y,g)\log\left((W^G_f*W^G_f)(x,y,g)\right) dxdy.
\end{align*}
Here we define $0\cdot \log(0)$ to be $0$ as it is standard practice in information theory. 

Let \[E=\bigcup_{v\in [n]} \left(\frac{v-1}n,\frac{v}n\right]^2.\]

Note that $W^G_f$ vanishes on $E\times G$, so
\begin{equation}\label{vanishonE}\sum_{g\in G} \int_E W^G_f(x,y,g)\log\left((W^G_f*W^G_f)(x,y,g)\right) dxdy=0.\end{equation}

Thus,
\begin{align}\label{eq12}
\sum_{\tau\in{{[n]}\choose{2}}} \log\left(\frac{t_Y(\tau)}{n}\right)&=\frac{1}2\sum_{\substack{u,w\in [n]\\u\neq w}}\log\left(\frac{t_Y(\{u,w\})}{n}\right)\\&=\frac{n^2}2 \sum_{\substack{u,w\in [n]\\u\neq w}}\sum_{g\in G} \int_{\frac{u-1}n}^{\frac{u}n}\int_{\frac{w-1}n}^{\frac{w}n} W^G_f(x,y,g)\log\left((W^G_f*W^G_f)(x,y,g)\right) dxdy\nonumber\\&=\frac{n^2}2 \sum_{g\in G} \int_{[0,1]^2\setminus E} W^G_f(x,y,g)\log\left((W^G_f*W^G_f)(x,y,g)\right) dxdy\nonumber\\&=\frac{n^2}2 \sum_{g\in G} \int_{[0,1]^2} W^G_f(x,y,g)\log\left((W^G_f*W^G_f)(x,y,g)\right) dxdy\nonumber\\&=\frac{n^2}2\left\langle W^G_f,\log\circ (W^G_f*W^G_f)\right\rangle.\nonumber
\end{align}
where in the second to last step, we used \eqref{vanishonE}.

Combining this with \eqref{logPbound}, we obtain that 
\begin{equation}\label{upperbf}\log \mathbb{P}\left(f\in Z^1(T_n,\mathbb{F}_2)\right)\le (n-2)\log (n)+\frac{n^2}2\left(1-\frac{2}n\right)b(W^G_f),\end{equation}
where 
\begin{equation}\label{fdef}b(W^G)=\left\langle W^G,\log\circ (W^G*W^G)\right\rangle.
\end{equation}

\subsection{Applying the large deviation principle}

Let $\overline{\nu}$ be the uniform measure on $G$. For a cochain graphon $W^G\in\mathcal{W}_0^G$, we define
\begin{equation*}H(W^G)=\log |G|-2I_{\overline{\nu}}(W^G),\end{equation*}
where $I_{\overline{\nu}}(W^G)$ was defined in \eqref{ratefunctiondef}. Since $I_{\overline{\nu}}$ is well-defined on $\widetilde{\mathcal{W}}_0^G$, so is $H$.

Combining the definitions of $I_{\overline{\nu}}$ and $H$, we see that
\[H(W^G)=
\begin{cases}
\mathlarger{\int_{[0,1]^2} \sum_{g\in G}-W^g(x,y)\log W^g(x,y) dxdy}&\text{if }W^G\in \mathcal{W}^G_{00},\\
-\infty&\text{otherwise.}
\end{cases}\]

For a subset $\widetilde{F}$ of $\widetilde{\mathcal{W}}_0^G$, let
\[\mathcal{C}(n,\widetilde{F})=\{f\in C^1(K_n,G)\,:\,\widetilde{W}_f^G\in \widetilde{F}\}.\]

Letting $\widetilde{R}_n^G=\widetilde{W}^G_{F_{n,\overline{\nu}}}$, we see that
\[|\mathcal{C}(n,\widetilde{F})|=|G|^{{n}\choose{2}}\mathbb{P}(\widetilde{R}^G_n\in \widetilde{F})=|G|^{(1+o(1))\frac{n^2}2}\mathbb{P}(\widetilde{R}^G_n\in \widetilde{F}).\]

Thus, as a straightforward corollary of Theorem~\ref{largediv0}, we obtain the following statement.
\begin{lemma}\label{logcount}
Let $\widetilde{F}$ be a closed subset of $\widetilde{\mathcal{W}}_0^G$. Then
\[\limsup_{n\to\infty} \frac{1}{n^2}\log |\mathcal{C}(n,\widetilde{F})|\le \frac{1}2\sup_{\widetilde{V}^G\in \widetilde{F}} H(\widetilde{V}^G).\]

\end{lemma}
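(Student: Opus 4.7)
The statement is advertised as a straightforward corollary, so the plan is essentially to turn the identity
\[
|\mathcal{C}(n,\widetilde{F})|=|G|^{\binom{n}{2}}\,\mathbb{P}(\widetilde{R}^G_n\in\widetilde{F})
\]
displayed just above the lemma into the desired inequality by taking $\tfrac{1}{n^2}\log$ on both sides.

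First, I would note that
\[
\frac{1}{n^2}\log|\mathcal{C}(n,\widetilde{F})|=\frac{1}{n^2}\binom{n}{2}\log|G|+\frac{1}{n^2}\log\mathbb{P}(\widetilde{R}^G_n\in\widetilde{F}),
\]
and that the first term tends to $\tfrac{1}{2}\log|G|$ as $n\to\infty$. Next, I would apply the upper-bound half of the large deviation principle, Theorem~\ref{largediv0}(a), with the choice $\nu=\overline{\nu}$ to the closed set $\widetilde{F}$, yielding
\[
\limsup_{n\to\infty}\frac{1}{n^2}\log\mathbb{P}(\widetilde{R}^G_n\in\widetilde{F})\le -\inf_{\widetilde{W}^G\in\widetilde{F}}I_{\overline{\nu}}(\widetilde{W}^G)=\sup_{\widetilde{W}^G\in\widetilde{F}}\bigl(-I_{\overline{\nu}}(\widetilde{W}^G)\bigr).
\]

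Finally, I would add the two estimates and push the constant $\tfrac{1}{2}\log|G|$ inside the supremum, using the definition $H(\widetilde{W}^G)=\log|G|-2I_{\overline{\nu}}(\widetilde{W}^G)$ from just above the lemma, to obtain
\[
\limsup_{n\to\infty}\frac{1}{n^2}\log|\mathcal{C}(n,\widetilde{F})|\le \sup_{\widetilde{W}^G\in\widetilde{F}}\left(\frac{1}{2}\log|G|-I_{\overline{\nu}}(\widetilde{W}^G)\right)=\frac{1}{2}\sup_{\widetilde{W}^G\in\widetilde{F}}H(\widetilde{W}^G).
\]
The only thing one has to be careful about is that $H$ is well-defined on $\widetilde{\mathcal{W}}_0^G$, which is guaranteed because $I_{\overline{\nu}}$ was shown to be well-defined there in Section~\ref{secweakLDP}; since $\widetilde{F}$ is given to be closed, Theorem~\ref{largediv0}(a) applies verbatim and there is no genuine obstacle. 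I do not foresee a hard step.
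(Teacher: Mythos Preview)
Your proposal is correct and is exactly the argument the paper has in mind: the paper does not give a separate proof but simply declares the lemma a ``straightforward corollary of Theorem~\ref{largediv0}'' after displaying the identity $|\mathcal{C}(n,\widetilde{F})|=|G|^{\binom{n}{2}}\mathbb{P}(\widetilde{R}^G_n\in\widetilde{F})$, and your write-up fills in precisely those two lines.
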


Recall that we defined $b$ in \eqref{fdef}.

\begin{lemma}\label{semic0}
The function $b$ is upper semicontinuous on $\mathcal{W}_0^G$ with respect to the cut norm.
\end{lemma}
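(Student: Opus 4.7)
The plan is to approximate $b$ from above by continuous truncations to which the lemmas of Section~2.2 apply directly, and then pass $M\to\infty$ by monotone convergence. For $M>0$ define the bounded Lipschitz function
\[\log_M:[0,\infty)\to\mathbb{R},\qquad \log_M(x)=\log\bigl(\max(x,e^{-M})\bigr),\]
which equals $\log(x)$ for $x\ge e^{-M}$ and equals $-M$ for $x<e^{-M}$. Then $\log_M$ is Lipschitz with constant $e^M$, bounded on any compact subset of $[0,\infty)$, and satisfies $\log_M(x)\ge \log(x)$ for every $x>0$ (with equality once $x\ge e^{-M}$). Set
\[b_M(W^G)=\bigl\langle W^G,\log_M\circ (W^G*W^G)\bigr\rangle.\]
Because $W^G\ge 0$ pointwise and $(W^G*W^G)^g\in[0,|G|]$, both $b(W^G)$ and $b_M(W^G)$ are well-defined in $[-\infty,\infty)$, and $b(W^G)\le b_M(W^G)$.

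Next I argue that $b_M$ is continuous on $\mathcal{W}_0^G$ with respect to the cut norm. Suppose $W_n^G\to W^G$ in cut norm. Applying Lemma~\ref{limitofconvolution} with $V_n^G=W_n^G$ yields $W_n^G*W_n^G\to W^G*W^G$ in $L^1$. Since $\log_M$ is bounded and Lipschitz on $[0,|G|]$, Lemma~\ref{Lipschitz} gives $\log_M\circ(W_n^G*W_n^G)\to \log_M\circ(W^G*W^G)$ in $L^1$. The cochain graphons $W_n^G$ are uniformly bounded by $1$ and converge to $W^G$ in cut norm, so Lemma~\ref{lemmaangleWG} yields
\[b_M(W_n^G)=\bigl\langle W_n^G,\log_M\circ(W_n^G*W_n^G)\bigr\rangle\longrightarrow \bigl\langle W^G,\log_M\circ(W^G*W^G)\bigr\rangle=b_M(W^G).\]
In particular, $\limsup_{n\to\infty} b(W_n^G)\le \limsup_{n\to\infty} b_M(W_n^G)= b_M(W^G)$ for every $M>0$.

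It remains to verify that $b_M(W^G)\downarrow b(W^G)$ as $M\to\infty$. As $M$ increases, $e^{-M}$ decreases, so $\log_M((W^G*W^G)^g)$ decreases pointwise to $\log((W^G*W^G)^g)\in[-\infty,\log|G|]$; multiplying by the non-negative factor $W^g$ preserves the monotonicity. The integrand at $M=1$ is bounded above by $W^g\log|G|\le \log|G|$ and hence is integrable from above, so the monotone convergence theorem for decreasing sequences applies and gives $b_M(W^G)\downarrow b(W^G)$, with the limit permitted to be $-\infty$. Combining this with the previous paragraph,
\[\limsup_{n\to\infty} b(W_n^G)\le \inf_{M>0} b_M(W^G)=b(W^G),\]
which is the desired upper semicontinuity.

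The only subtlety is the possible singularity of $\log$ at $0$: one must check that the truncation $\log_M$ is a genuine upper bound (which uses $W^G\ge 0$) and that the monotone passage $M\to\infty$ remains valid when $b(W^G)=-\infty$, which it does for monotone decreasing sequences whose positive parts are integrable. Everything else is a direct chaining of Lemmas~\ref{limitofconvolution}, \ref{Lipschitz}, and~\ref{lemmaangleWG}.
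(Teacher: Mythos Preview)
Your proof is correct and follows essentially the same approach as the paper: the paper's truncation $\ell_k(x)=\max(-k,\log x)$ is exactly your $\log_M$ with $M=k$, and the chain of Lemmas~\ref{limitofconvolution}, \ref{Lipschitz}, \ref{lemmaangleWG} together with monotone convergence is identical. Your write-up is, if anything, slightly more explicit about why the monotone convergence step is justified when $b(W^G)=-\infty$ and about the range $[0,|G|]$ of $(W^G*W^G)^g$ needed for boundedness of the truncation.
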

\begin{proof}
For $k>0$, we define $\ell_k:[0,1]\to \mathbb{R}$ as $\ell_k(x)=\max(-k,\log(x))$. Given a cochain graphon $W^G\in \mathcal{W}^G_0$, let
\[b_k(W^G)=\langle W^G, \ell_k\circ (W^G*W^G)\rangle.\]

By the monotone convergence theorem, we have $b(W^G)=\inf_{k>0} b_k(W^G)$. So it is enough to prove that $b_k$ is continuous for all $k>0$. Let $W^G_1,W^G_2,\dots$ be a sequence of cochain graphons converging to the cochain graphon $W^G$ in the cut norm. Using Lemma~\ref{limitofconvolution}, we see that $\lim_{n\to\infty} W_n^G*W_n^G=W^G*W^G$ in the $L^1$ norm. Since $\ell_k$ is a Lipschitz function, it follows from Lemma~\ref{Lipschitz} that $\lim_{n\to\infty}\ell_k\circ (W_n^G*W_n^G)=\ell_k\circ (W^G*W^G)$ in the $L^1$ norm. Finally, we can apply Lemma~\ref{lemmaangleWG} to conclude that $\lim_{n\to\infty}\langle W^G_n, \ell_k\circ (W_n^G*W_n^G)\rangle=\langle W^G, \ell_k\circ (W^G*W^G)\rangle$. Therefore, $b_k$ is indeed continuous.
\end{proof}

\begin{remark} The function $b$ is not continuous on $\mathcal{W}_0^G$ with respect to the cut norm as the following example shows. Let $G=\mathbb{Z}/2\mathbb{Z}$, and let us define $W^G_n$ as follows, $W_n^1$ be the indicator function of $[0,n^{-1}]^2$ and let $W_n^0=\mathbbm{1}-W_n^1$, then $W_n^G$ converge to $W^G$, where $W^0=\mathbbm{1}$ and $W^1=0$. We have $b(W_n^G)=-\infty$ for all $n$, but $b(W^G)=0$. 
\end{remark}
\begin{lemma}\label{semic}
The function $b$ is a well-defined upper semicontinuous function on $\widetilde{\mathcal{W}}^G_0$.
\end{lemma}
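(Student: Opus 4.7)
The plan is to deduce Lemma~\ref{semic} from Lemma~\ref{semic0} and the invariance of $b$ under the action of $S_{[0,1]}$.

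First I would establish that $b$ is invariant under the $S_{[0,1]}$-action, i.e. $b((W^G)^{\varphi})=b(W^G)$ for every $\varphi\in S_{[0,1]}$. The key observation is that convolution commutes with pullback: for any $\varphi\in S_{[0,1]}$, we have $(W^G)^{\varphi}*(W^G)^{\varphi}=(W^G*W^G)^{\varphi}$. This is a routine change of variables using that $\varphi$ is a measure-preserving bijection: for each $g,h\in G$,
\[((W^h)^{\varphi}\circ (W^{g-h})^{\varphi})(x,y)=\int_0^1 W^h(\varphi(x),\varphi(t))W^{g-h}(\varphi(t),\varphi(y))\,dt=(W^h\circ W^{g-h})(\varphi(x),\varphi(y)).\]
Composing with $\log$ preserves the pullback, so $\log\circ((W^G)^{\varphi}*(W^G)^{\varphi})=(\log\circ(W^G*W^G))^{\varphi}$. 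A final change of variables in the integral defining $\langle\cdot,\cdot\rangle$ then gives $b((W^G)^{\varphi})=b(W^G)$.

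Next I would show that $b$ is well-defined on $\widetilde{\mathcal{W}}_0^G$. Suppose $V^G,W^G\in\mathcal{W}_0^G$ satisfy $\delta_\square(V^G,W^G)=0$. By definition of $\delta_\square$, there exists a sequence $\varphi_n\in S_{[0,1]}$ with $\|V^G-(W^G)^{\varphi_n}\|_\square\to 0$. Applying Lemma~\ref{semic0} to the sequence $(W^G)^{\varphi_n}\to V^G$ in cut norm, and using the invariance established above,
\[b(V^G)\ge \limsup_{n\to\infty} b((W^G)^{\varphi_n})=b(W^G).\]
Since $\delta_\square$ is symmetric, swapping the roles of $V^G$ and $W^G$ yields the reverse inequality, so $b(V^G)=b(W^G)$, which proves that $b$ descends to a function on $\widetilde{\mathcal{W}}_0^G$.

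Finally, upper semicontinuity on $\widetilde{\mathcal{W}}_0^G$ follows by essentially the same argument. If $\widetilde{W}^G_n\to\widetilde{W}^G$ in $\delta_\square$, pick representatives and, for each $n$, a map $\varphi_n\in S_{[0,1]}$ with $\|(W^G_n)^{\varphi_n}-W^G\|_\square\le\delta_\square(W^G_n,W^G)+1/n$; then $(W^G_n)^{\varphi_n}\to W^G$ in cut norm. By Lemma~\ref{semic0} and the $S_{[0,1]}$-invariance,
\[\limsup_{n\to\infty} b(\widetilde{W}^G_n)=\limsup_{n\to\infty} b((W^G_n)^{\varphi_n})\le b(W^G)=b(\widetilde{W}^G),\]
which is the desired upper semicontinuity. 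The only non-routine step is verifying invariance under $S_{[0,1]}$, but since all ingredients ($\circ$, $*$, $\log$ composition, $\langle\cdot,\cdot\rangle$) transform naturally under measure-preserving substitutions, this reduces to bookkeeping with change of variables.
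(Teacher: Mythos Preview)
Your proposal is correct and follows essentially the same approach as the paper: the paper also proves well-definedness by choosing $\varphi_n\in S_{[0,1]}$ with $(W^G)^{\varphi_n}\to V^G$ in cut norm, invoking the invariance $b((W^G)^{\varphi_n})=b(W^G)$ (which it states as ``clear from the definition''), applying Lemma~\ref{semic0}, and then symmetrizing. Your version simply spells out more of the bookkeeping (the change-of-variables computation for invariance and the explicit argument for upper semicontinuity on the quotient), while the paper leaves these as one-line remarks.
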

\begin{proof}
To prove that $b$ is well-defined we need to show that if $V^G$ and $W^G$ are two cochain graphons such that $V^G\in \widetilde{W}^G$, then $b(V^G)=b(W^G)$. Since $V^G\in \widetilde{W}^G$, we can choose a sequence $\varphi_n\in S_{[0,1]}$ such that $(W^G)^{\varphi_n}$ converge to $V^G$ in the cut norm. It is clear from the definition that $b((W^G)^{\varphi_n})=b(W^G)$. Combining this with Lemma~\ref{semic0}, it follows that $b(V^G)\ge b(W^G)$. Since $W^G\in \widetilde{V}^G$ also holds, the same argument gives that $b(W^G)\ge b(V^G)$. Thus, $b(V^G)=b(W^G)$. So $b$ is indeed well-defined. The semicontinuity is clear from Lemma~\ref{semic0}. 
\end{proof}

For $u\ge 0$, let
\[\widetilde{F}_u=\left\{\widetilde{W}^G\in \widetilde{\mathcal{W}}^G_0\,:\,b(\widetilde{W}^G)\ge -u\right\}.\]

\begin{lemma}\label{LDFu}
For $u\ge 0$, we have
\[\limsup_{n\to\infty} \frac{1}{n^2}\log |\mathcal{C}(n,\widetilde{F}_u)|\le \frac{u}2.\]

\end{lemma}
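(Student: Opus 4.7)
The strategy is to combine Lemma~\ref{logcount} with a pointwise Gibbs-type inequality relating the functional $b$ to the entropy functional $H$. First, since $b$ is upper semicontinuous on $\widetilde{\mathcal{W}}_0^G$ by Lemma~\ref{semic}, the sublevel-type set $\widetilde{F}_u = \{\widetilde{W}^G : b(\widetilde{W}^G) \ge -u\}$ is closed, so Lemma~\ref{logcount} applies and gives
\[\limsup_{n\to\infty} \frac{1}{n^2}\log |\mathcal{C}(n,\widetilde{F}_u)|\le \frac{1}{2}\sup_{\widetilde{V}^G\in \widetilde{F}_u} H(\widetilde{V}^G).\]
Hence it suffices to show that $H(\widetilde{V}^G) \le u$ for every $\widetilde{V}^G \in \widetilde{F}_u$, i.e., that
\[H(W^G) + b(W^G) \le 0 \qquad \text{for every } W^G \in \mathcal{W}_0^G.\]

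The key observation for this inequality is that when $W^G \in \mathcal{W}^G_{00}$, both $W^G(x,y,\cdot)$ and $(W^G*W^G)(x,y,\cdot)$ are probability distributions on $G$ for almost every $(x,y) \in [0,1]^2$; the latter follows from Lemma~\ref{ConvinW00}. Then Gibbs' inequality (equivalently, the nonnegativity of KL divergence) gives
\[\sum_{g \in G} W^g(x,y)\, \log (W^G*W^G)^g(x,y) \;\le\; \sum_{g \in G} W^g(x,y)\, \log W^g(x,y)\]
for almost every $(x,y)$, where we use the convention $0 \log 0 = 0$. Integrating over $[0,1]^2$ yields
\[b(W^G) = \left\langle W^G, \log \circ (W^G * W^G)\right\rangle \;\le\; \int_{[0,1]^2} \sum_{g \in G} W^g(x,y)\, \log W^g(x,y)\, dxdy = -H(W^G),\]
which is exactly the desired inequality on $\mathcal{W}^G_{00}$. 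If $W^G \notin \mathcal{W}^G_{00}$, then $H(W^G) = -\infty$ by definition and the inequality is trivial. This gives $H(W^G) \le -b(W^G) \le u$ for $W^G \in \widetilde{F}_u$, hence $\sup_{\widetilde{V}^G \in \widetilde{F}_u} H(\widetilde{V}^G) \le u$, and the lemma follows.

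I do not expect any serious obstacle here; the only thing to be slightly careful about is justifying the pointwise Gibbs inequality (which is routine once the a.e.\ probability distribution structure of $W^G(x,y,\cdot)$ and $(W^G*W^G)(x,y,\cdot)$ is identified) and handling the conventions around $0 \log 0$ and the $-\infty$ values of $H$ and $b$ cleanly. The conceptual content is simply that cross-entropy dominates entropy, which directly converts the lower bound $b \ge -u$ into the upper bound $H \le u$.
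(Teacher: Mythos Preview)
Your proposal is correct and follows essentially the same approach as the paper: both close $\widetilde{F}_u$ via Lemma~\ref{semic}, invoke Lemma~\ref{logcount}, and then prove $H(W^G)+b(W^G)\le 0$ on $\mathcal{W}^G_{00}$ by applying Gibbs' inequality (nonnegativity of KL divergence) pointwise, using Lemma~\ref{ConvinW00} to ensure $(W^G*W^G)(x,y,\cdot)$ is a probability distribution. The only cosmetic difference is that the paper writes the integrand directly as $-\dkl$, whereas you phrase it as cross-entropy dominating entropy; these are the same inequality.
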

\begin{proof}
 Using Lemma~\ref{semic}, we see that $\widetilde{F}_u$ is closed. Thus, by Lemma~\ref{logcount}, we need to show that $H(W^G)\le u$ for all $W^G\in \mathcal{W}^G_0$ such that $b(W^G)\ge -u$. Clearly, $H(W^G)\le u$ if $W^G\notin \mathcal{W}^G_{00}$. Thus, the statement will follow once we prove that $b(W^G)+H(W^G)\le 0$ for all $W^G\in \mathcal{W}^G_{00}$. This inequality holds, because
 \begin{align*}
 b(W^G)+H(W^G)&=\int_{[0,1]^2}\sum_{g\in G} W^g(x,y)\log\left(\frac{(W^G*W^G)(x,y,g)}{W^g(x,y)}\right) dx dy\\&=\int_{[0,1]^2}-\dkl\left((W^g(x,y))_{g\in G}\, \Big|\Big|\, ((W^G*W^G)(x,y,g))_{g\in G}\right)dxdy\\&\le 0,
 \end{align*}
 where $\dkl$ denotes the Kullback–Leibler divergence and the last inequality follows from Gibbs' inequality. Here we need to use Lemma~\ref{ConvinW00} to see that $((W^G*W^G)(x,y,g))_{g\in G}$ is indeed a probability measure on $G$.
\end{proof}


\subsection{The proof of Theorem~\ref{thm1}}

Let $k$ be a large positive integer, and let $\varepsilon=\frac{\log |G|}k$. For $i=0,1,2,\dots,k-1$, let
\[\widetilde{L}_i=\left\{\widetilde{W}^G\in \widetilde{\mathcal{W}}_0^G\,:\: -i\varepsilon \ge b(\widetilde{W}^G)>-(i+1)\varepsilon \right\},\]
and let $\widetilde{L}_{k}=\left\{\widetilde{W}^G\in \widetilde{\mathcal{W}}_0^G\,:\: -k\varepsilon \ge b(\widetilde{W}^G) \right\}$. 

Since $b$ is non-positive, for every $n$, $(\mathcal{C}(n,\widetilde{L}_i))_{i=0}^{k}$ gives us a partition of $C^1(K_n,G)$. Using~\eqref{upperbf}, we see that if $n$ is large enough, then for all $f\in \mathcal{C}(n,\widetilde{L}_i)$, we have
\begin{align}\label{Pbound}\mathbb{P}\left(f\in Z^1(T_n,G)\right)&\le (n-2)\log (n)+\frac{n^2}2\left(1-\frac{2}n\right)b(W_f^G)\\&\le(n-2)\log (n)-\frac{n^2}2\left(1-\frac{2}n\right)i\varepsilon\nonumber\\&\le -\frac{n^2(i-1)\varepsilon}2. \nonumber
\end{align}

Let $0\le i\le k-1$. We have $\widetilde{L}_i\subset \widetilde{F}_{(i+1)\varepsilon}$, so Lemma~\ref{LDFu} gives us that for all large enough $n$, we have
\begin{equation}\label{GLi}\log |\mathcal{C}(n,\widetilde{L}_i)|\le \log |\mathcal{C}(n,\widetilde{F}_{(i+1)\varepsilon})|\le \frac{n^2(i+2)\varepsilon}2.
\end{equation}

Since $|C^1(K_n,G)|\le |G|^{\frac{n^2}2}$, \eqref{GLi} also holds for $i=k$. 

Thus, combining \eqref{Pbound} and \eqref{GLi}, we see that for all large enough $n$, we have
\begin{align*}\mathbb{E} |Z^1(T_n,G)|&\le\sum_{i=0}^k \sum_{f\in \mathcal{C}(n,\widetilde{L}_i)}\mathbb{P}\left(f\in Z^1(T_n,G)\right)\\&\le \sum_{i=0}^k \exp\left(\frac{n^2(i+2)\varepsilon}2\right)\exp\left(-\frac{n^2(i-1)\varepsilon}2\right)\\&\le (k+1)\exp\left(2\varepsilon n^2 \right).
\end{align*}

Tending to infinity with $k$, the statement follows.

\subsection{The proof of Corollary~\ref{cory1}}

Note that $H_1(T_n,\mathbb{F}_p)\cong H^1(T_n,\mathbb{F}_p)$, see \cite[Chapter 3.1]{hatcher2000algebraic}. It will be more convenient to work with $H^1(T_n,\mathbb{F}_p)$.

Using Markov's inequality, we see that for any $\varepsilon>0$, we have
\begin{align*}\mathbb{P}(\dim H^1(T_n,\mathbb{F}_p)\ge 2\varepsilon n^2)&=\mathbb{P}(| H^1(T_n,\mathbb{F}_p)|\ge p^{2\varepsilon n^2})\\&\le \mathbb{P}(| Z^1(T_n,\mathbb{F}_p)|\ge p^{2\varepsilon n^2}) \\&\le \frac{\mathbb{E} |Z^1(T_n,\mathbb{F}_p)|}{p^{2\varepsilon n^2}}. \end{align*}

By Theorem~\ref{thm1}, we have $\mathbb{E} |Z^1(T_n,\mathbb{F}_p)|\le p^{\varepsilon n^2}$ for all large enough $n$. Thus, for all large enough $n$, we have
\[\mathbb{P}(\dim H^1(T_n,\mathbb{F}_p)\ge 2\varepsilon n^2)\le p^{-\varepsilon n^2},\]
which tends to $0$. Tending to $0$ with $\varepsilon$, the statement follows.

\subsection{The proof of Corollary~\ref{cory2}}

\begin{lemma}\cite[Lemma 2.1]{rc3}\label{torsionbound}
If $X$ is a simplicial complex on $n$ vertices, then
\[|\torsion H_1(X,\mathbb{Z})|\le 3^{\frac{n^2}4}.\]
\end{lemma}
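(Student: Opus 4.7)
The plan is to apply Hadamard's inequality to a maximal non-singular square submatrix of the cellular boundary operator $\partial_2$, exploiting the fact that every column of $\partial_2$ has exactly three nonzero entries of $\pm 1$. Since $H_1(X,\mathbb{Z})$ depends only on the $2$-skeleton, I would first reduce to the case that $X$ is $2$-dimensional, and write $C_i\cong\mathbb{Z}^{|X(i)|}$ after fixing orientations on the simplices.

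Rather than work with $\partial_2:C_2\to Z_1$, which would require choosing an awkward integral basis of $Z_1$, I would work with the full boundary matrix $\partial_2:C_2\to C_1$. This is legitimate because $C_1/Z_1 \cong \Image\partial_1$ is a subgroup of the free group $C_0$ and hence free, so the short exact sequence $0 \to Z_1 \to C_1 \to C_1/Z_1 \to 0$ splits. Combined with $\Image\partial_2 \subseteq Z_1$, this yields
\[\torsion H_1(X,\mathbb{Z}) = \torsion(Z_1/\Image\partial_2) = \torsion(C_1/\Image\partial_2).\]
By the Smith normal form, if $r$ denotes the rank of $\partial_2$, then the order of the torsion part of $C_1/\Image\partial_2$ equals the greatest common divisor of the $r\times r$ minors of $\partial_2$, so it is bounded above by $|\det M|$ for any non-singular $r\times r$ submatrix $M$.

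The last step is to estimate $|\det M|$. Each column of $\partial_2$ corresponds to a $2$-face $\{u,v,w\}$ and has exactly three nonzero entries, each $\pm 1$ (at the edges $\{u,v\}$, $\{v,w\}$, $\{u,w\}$), so every column of $M$ has Euclidean norm at most $\sqrt{3}$. Hadamard's inequality applied column-wise therefore gives $|\det M|\le 3^{r/2}$, and since $r \le \binom{n}{2} \le n^2/2$ we obtain the claimed bound $|\torsion H_1(X,\mathbb{Z})|\le 3^{n^2/4}$.

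The proof is quite short, so I do not anticipate a serious obstacle. The only step that really requires a thought is the reduction from $Z_1/\Image\partial_2$ to $C_1/\Image\partial_2$: without it one would have to apply Hadamard's inequality to the matrix of $\partial_2$ expressed in some integral basis of $Z_1$, whose column norms are not easy to control. Once this reduction is in place, the sparse $\pm 1$ structure of $\partial_2$ immediately delivers the bound.
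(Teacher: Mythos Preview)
The paper does not prove this lemma; it simply cites it from \cite[Lemma~2.1]{rc3} and uses it as a black box. Your argument is correct and is essentially the standard proof (and almost certainly the one in the cited reference): reduce to the cokernel of the full boundary matrix $\partial_2:C_2\to C_1$ via the splitting $C_1\cong Z_1\oplus \Image\partial_1$, then bound the product of invariant factors by a single nonzero $r\times r$ minor, and finally apply Hadamard's inequality column-wise using that each column of $\partial_2$ has Euclidean norm $\sqrt{3}$. One tiny quibble of phrasing: you wrote that the order of the torsion ``equals the greatest common divisor of the $r\times r$ minors'' --- this is true, but what you actually use (and what Smith normal form gives directly) is that this gcd equals $d_1\cdots d_r$, hence is bounded above by any single nonzero $r\times r$ minor; the argument goes through unchanged.
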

\begin{lemma}\label{mgbound}
For any $2$-dimensional simplicial complex $X$ and $k\ge 2$, we have
\[\mg (H_1(X,\mathbb{Z}))\le \max_{p\le k\text{ is a prime}} \dim H_1(X,\mathbb{F}_p)+\frac{\log |\torsion H_1(X,\mathbb{Z})|}{\log(k)}.\]
\end{lemma}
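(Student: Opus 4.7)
My plan is to invoke the structure theorem for finitely generated abelian groups to write
\[H_1(X,\mathbb{Z})\cong\mathbb{Z}^r\oplus\bigoplus_{i=1}^s\mathbb{Z}/d_i\mathbb{Z},\quad 1<d_1\mid d_2\mid\cdots\mid d_s,\]
and then estimate the two numerical invariants in the claim in terms of $r$, $s$ and the $d_i$. Two elementary facts feed into the computation. First, $\mg(H_1(X,\mathbb{Z}))=r+s$; the lower bound follows from $\mg(G)\ge\dim_{\mathbb{F}_p}(G/pG)$ applied to any prime $p$ dividing $d_1$, while the upper bound is immediate. Second, the universal coefficient theorem combined with the fact that $H_0(X,\mathbb{Z})$ is torsion-free gives $\dim H_1(X,\mathbb{F}_p)=r+\#\{i:p\mid d_i\}$ for every prime $p$.

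The key step exploits the divisibility chain $d_1\mid\cdots\mid d_s$. Let $A=\{i:d_i\text{ is divisible by some prime }\le k\}$; because any prime dividing $d_i$ also divides $d_j$ for all $j\ge i$, the set $A$ is a suffix $\{i_0,i_0+1,\dots,s\}$ of the index set. When $A$ is nonempty, the smallest element $d_{i_0}$ has some prime divisor $p\le k$, and this single prime then divides every $d_i$ for $i\ge i_0$, so
\[\dim H_1(X,\mathbb{F}_p)\ge r+(s-i_0+1).\]
For $i<i_0$, by construction $d_i>1$ has no prime factor $\le k$, hence $d_i>k$ and $\log d_i>\log k$. Consequently,
\[\log|\torsion H_1(X,\mathbb{Z})|=\sum_{i=1}^s\log d_i\ge\sum_{i<i_0}\log d_i\ge(i_0-1)\log k,\]
which gives $i_0-1\le\log|\torsion H_1(X,\mathbb{Z})|/\log k$. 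Adding the two inequalities,
\[\mg(H_1(X,\mathbb{Z}))=r+s\le\dim H_1(X,\mathbb{F}_p)+\frac{\log|\torsion H_1(X,\mathbb{Z})|}{\log k},\]
and since $p\le k$ the right-hand side is bounded by the claimed maximum. The edge case $A=\emptyset$ is handled directly: then $s\le\log|\torsion H_1(X,\mathbb{Z})|/\log k$ by the same calculation, and for any prime $p\le k$ (which exists because $k\ge 2$) we have $\dim H_1(X,\mathbb{F}_p)=r$, so the inequality still holds.

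I do not expect a genuine obstacle here: once one decomposes the homology and notices the ``suffix'' property of the set of primes dividing each invariant factor, the estimate is just a book-keeping argument comparing $\log|\torsion H_1(X,\mathbb{Z})|$ with $\log k$. The only place that warrants care is the step combining $d_i>1$ with ``no prime divisor $\le k$'' to conclude $d_i>k$, which is what converts a count of ``large-only'' invariant factors into a logarithmic bound.
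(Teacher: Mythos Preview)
Your proof is correct and follows essentially the same approach as the paper: both arguments combine the structure theorem for finitely generated abelian groups with the universal coefficient identity $\dim H_1(X,\mathbb{F}_p)=\dim H_1(X,\mathbb{Q})+\mg_p$, and then bound the contribution of primes $p>k$ via $p^{\mg_p}\le|\torsion H_1(X,\mathbb{Z})|$. The only cosmetic difference is that the paper phrases the last step as bounding $\dim H_1(X,\mathbb{F}_p)$ for each prime $p>k$ and then invoking $\mg(H_1(X,\mathbb{Z}))=\sup_p\dim H_1(X,\mathbb{F}_p)$, whereas you work directly with the invariant factor decomposition and the suffix property of~$A$.
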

\begin{proof}
Let $\mg_p$ be the minimum number of generators of the $p$-torsion of $H_1(X,\mathbb{Z})$. Then $p^{\mg_p}\le |\torsion H_1(X,\mathbb{Z})|$. Since $H_0(X,\mathbb{Z})$ is free, it follows from the universal coefficient theorem that
\[\dim H_1(X,\mathbb{F}_p)=\dim H^1(X,\mathbb{F}_p)=\dim H_1(X,\mathbb{Q})+\mg_p.\]
Thus, for any $p>k$, we have
\[\dim H_1(X,\mathbb{F}_p)\le \dim H_1(X,\mathbb{F}_2)+\log_k |\torsion H_1(X,\mathbb{Z})|.\]
The statement follows by combining this estimate with the fact that
\[\mg (H_1(X,\mathbb{Z}))=\sup_{p\text{ is a prime}} \dim H_1(X,\mathbb{F}_p).\qedhere\]
\end{proof}

Combining Lemma~\ref{torsionbound} and Lemma~\ref{mgbound}, we see that
\[\mg(H_1(X,\mathbb{Z}))\le \max_{p\le k\text{ is a prime}} \dim H_1(X,\mathbb{F}_p)+\frac{\log(3)n^2}{4\log(k)}.\]

Combining this with Corollary~\ref{cory1}, we see that 
\[\lim_{n\to\infty}\mathbb{P}\left(\mg(H_1(T_n,\mathbb{Z}))\ge \frac{\log(3)n^2}{\log(k)}\right)=0.\]

Tending to infinity with $k$, the statement follows.

\subsection{The proof of Theorem~\ref{thm2}}

The proof of Theorem~\ref{thm2} is almost identical to the proof of Theorem~\ref{thm1}, so we only sketch it.

It is easy to see and it was also observed by Linial and Peled \cite[Section 5]{linial2019enumeration} that 
\[\mathbb{P}\left(f\in Z^1(S_2(n,1),G)\right)=\prod_{\tau\in{{[n]}\choose{2}}} \frac{t_Y(\tau)}{n-2},\]
where $Y$ is defined as in \eqref{eqYdef}. 

Thus,
\begin{align*}\log \mathbb{P}\left(f\in Z^1(S_2(n,1),G)\right)&={{n}\choose{2}}\log\left(1+\frac{2}{n-2}\right)+\sum_{\tau\in{{[n]}\choose{2}}} \log \frac{t_Y(\tau)}{n}\\&\le \frac{n(n-1)}{n-2}+\frac{n^2}2b(W_f^G),\end{align*}
where the last inequality follows from \eqref{eq12}.

The proof can be finished along the lines of the proofs of Theorem~\ref{thm1}, Corollary~\ref{cory1} and Corollary~\ref{cory2}.



\section{The proof of the large deviation principle}

In this section, we prove Theorem~\ref{largediv0}. We mostly follow the argument of Chatterjee and Varadhan~\cite{chatterjee2011large}. However, we decided to provide more details compared to \cite{chatterjee2011large}.

\subsection{Regularity lemma}\label{regularitysection}

Let $\mathcal{P}$ be a partition of $[0,1]$ into finitely many measurable sets with positive measure. Given an integrable function $W:[0,1]^2\to\mathbb{R}$, the stepping $S_{\mathcal{P}}W:[0,1]^2\to \mathbb{R}$ of $W$ is defined by setting
\[(S_{\mathcal{P}}W)(x,y)=\frac{1}{|S||T|}\int_{S\times T} W(s,t) dsdt,\]
for all $x\in S\in \mathcal{P}$ and $y\in T\in \mathcal{P}$.

We say that $W:[0,1]^2\to\mathbb{R}$ is $\mathcal{P}$-measurable if $W$ is constant on all the sets of the form $S\times T$, where $S,T\in \mathcal{P}$. A cochain graphon $W^G$ is $\mathcal{P}$-measurable if $W^g$ is $\mathcal{P}$-measurable for all $g\in G$.

The proof of the next lemma is an easy exercise.

\begin{lemma}\label{stepnorm}
Assume that $W:[0,1]^2\to\mathbb{R}$ is $\mathcal{P}$-measurable, then
\[\|W\|_{\square}=\max_{\mathcal{S},\mathcal{T}\subset\mathcal{P}}\left|\int_{(\cup \mathcal{S})\times (\cup\mathcal{T})} W(s,t) ds dt\right|\]

\end{lemma}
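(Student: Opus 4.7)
The inequality ``$\ge$'' is immediate: every pair $(\bigcup\mathcal{S},\bigcup\mathcal{T})$ with $\mathcal{S},\mathcal{T}\subset\mathcal{P}$ consists of measurable subsets of $[0,1]$, so the maximum on the right hand side is bounded above by the supremum defining $\|W\|_\square$. The content is therefore in the reverse inequality, for which the plan is to fix arbitrary measurable sets $S,T\subset[0,1]$ and argue that the value of $\bigl|\int_{S\times T} W\,dsdt\bigr|$ can be reached by a choice where both $S$ and $T$ are unions of parts of $\mathcal{P}$.

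The key observation is that, because $W$ is $\mathcal{P}$-measurable, the function $s\mapsto \int_T W(s,t)\,dt$ is constant on each $P\in\mathcal{P}$; let $f_P(T)$ denote this constant value. Fubini then gives
\[\int_{S\times T} W(s,t)\,dsdt \;=\; \sum_{P\in\mathcal{P}} f_P(T)\,|S\cap P|,\]
which is a linear functional of the vector $(|S\cap P|)_{P\in\mathcal{P}}$ ranging over the box $\prod_{P\in\mathcal{P}}[0,|P|]$. Since a linear functional on such a box attains its maximum and minimum at vertices, I may replace $S$ by $S_+:=\bigcup\{P:f_P(T)\ge 0\}$ or $S_-:=\bigcup\{P:f_P(T)<0\}$ without decreasing the absolute value of the integral (taking whichever of these matches the sign of the original integral). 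Thus we may assume $S$ is a union of parts of $\mathcal{P}$.

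With $S$ of this form, the map $t\mapsto \int_S W(s,t)\,ds$ is likewise constant on each part of $\mathcal{P}$, so the identical linear-programming argument in the second variable lets me replace $T$ by a union of parts of $\mathcal{P}$ without decreasing the modulus of the integral. Since there are only finitely many such pairs $(\mathcal{S},\mathcal{T})$, the supremum is actually a maximum, giving the claimed equality.

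\textbf{Main obstacle.} There is essentially no obstacle: the statement reduces to the elementary fact that linear functionals on a finite product of intervals attain their extrema at vertices. The only mild care required is to track the absolute value by distinguishing the cases where the integral is positive or negative before choosing the optimal vertex.
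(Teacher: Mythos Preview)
Your argument is correct. The paper does not actually give a proof of this lemma; it simply states that ``the proof of the next lemma is an easy exercise,'' and your linear-programming-on-a-box argument is exactly the standard way to carry out that exercise.
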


\begin{lemma}\cite[Lemma 9.12.]{lovasz2012large}\label{factortwo}
Let $W_1,W_2:[0,1]^2\to\mathbb{R}$ be integrable functions, assume that $W_2$ is $\mathcal{P}$-measurable. Then
\[\|W_1-S_{\mathcal{P}}W_1\|_\square \le 2\|W_1-W_2\|_\square.\]

\end{lemma}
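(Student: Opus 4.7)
The plan is to reduce the bound to two standard facts about the stepping operator: that $S_{\mathcal{P}}$ fixes $\mathcal{P}$-measurable functions, and that $S_{\mathcal{P}}$ is a contraction in the cut norm. First I would decompose
\[W_1 - S_{\mathcal{P}} W_1 = (W_1 - W_2) + (W_2 - S_{\mathcal{P}} W_1).\]
Because $W_2$ is $\mathcal{P}$-measurable, $S_{\mathcal{P}} W_2 = W_2$, and since $S_{\mathcal{P}}$ is linear the right summand equals $S_{\mathcal{P}}(W_2 - W_1)$. Therefore
\[W_1 - S_{\mathcal{P}} W_1 = (W_1 - W_2) - S_{\mathcal{P}}(W_1 - W_2),\]
and the triangle inequality gives
\[\|W_1 - S_{\mathcal{P}} W_1\|_\square \le \|W_1 - W_2\|_\square + \|S_{\mathcal{P}}(W_1 - W_2)\|_\square.\]

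The remaining task is the contraction estimate $\|S_{\mathcal{P}} W\|_\square \le \|W\|_\square$ for any integrable $W$, which together with the previous display immediately yields the factor $2$. To establish the contraction, I would invoke Lemma~\ref{stepnorm}: since $S_{\mathcal{P}} W$ is $\mathcal{P}$-measurable, its cut norm is computed as the maximum of $|\int_{(\cup \mathcal{S}) \times (\cup \mathcal{T})} (S_{\mathcal{P}} W)(s,t)\, ds\,dt|$ over $\mathcal{S}, \mathcal{T} \subset \mathcal{P}$. The very definition of the stepping guarantees that for each $P_i, P_j \in \mathcal{P}$ the integral of $S_{\mathcal{P}} W$ over $P_i \times P_j$ coincides with the integral of $W$ over $P_i \times P_j$; summing over pairs shows
\[\int_{(\cup\mathcal{S}) \times (\cup\mathcal{T})} (S_{\mathcal{P}} W)(s,t)\,ds\,dt = \int_{(\cup\mathcal{S}) \times (\cup\mathcal{T})} W(s,t)\,ds\,dt.\]
The right hand side is bounded in absolute value by $\|W\|_\square$ since it is just one particular choice of measurable sets in the definition of the cut norm. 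Taking the maximum over $\mathcal{S},\mathcal{T}$ gives the contraction.

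I do not expect any serious obstacle here; the argument is a short, standard manipulation. The only conceptual step is the trick of introducing $W_2$ and invoking $S_{\mathcal{P}} W_2 = W_2$ so that the $\mathcal{P}$-measurable reference function disappears, leaving only the difference $W_1 - W_2$ on both sides of the triangle inequality. The slightly delicate bookkeeping is just checking that the stepping preserves integrals over rectangles aligned with $\mathcal{P}$, which is immediate from the definition.
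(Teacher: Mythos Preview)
Your proof is correct. The paper does not supply its own proof of this lemma---it is quoted without proof as \cite[Lemma 9.12]{lovasz2012large}---and your argument is precisely the standard one found there: write $W_1 - S_{\mathcal{P}}W_1 = (W_1 - W_2) - S_{\mathcal{P}}(W_1 - W_2)$ using $S_{\mathcal{P}}W_2 = W_2$, apply the triangle inequality, and invoke the cut-norm contractivity of $S_{\mathcal{P}}$ (which you verify via Lemma~\ref{stepnorm}).
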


For $W^G\in L^1(S_G)$, we define $S_{\mathcal{P}} W^G\in L^1(S_G)$ by setting
\[S_{\mathcal{P}} W^G(x,y,g)=S_{\mathcal{P}} W^g(x,y) \text{ for all }g\in G.\]
It is easy to check that if $W^G\in \mathcal{W}_0^G$, then $S_{\mathcal{P}} W^G\in \mathcal{W}_0^G$.

For $A\subset\{1,2,\dots,n\}$, we define
\[A_{/n}=\cup_{a\in A} \left(\frac{a-1}n,\frac{a}n\right].\]
For a partition $\mathcal{P}$ of $\{1,2,\dots,n\}$, we define the partition $\mathcal{P}_{/n}$ of $[0,1]$ as
\[\mathcal{P}_{/n}=\{A_{/n}\,:\,A\in \mathcal{P} \}.\]

Let $\mathcal{F}_n=\mathcal{P}_{/n}$, where $\mathcal{P}$ is the partition of $\{1,2,\dots,n\}$ into singletons. 

Given an $n\times n$ matrix $M$ over $\mathbb{R}$, we define the $\mathcal{F}_n$-measurable function $W_M:[0,1]^2\to \mathbb{R}$ by
\begin{equation}\label{MWM}
W_M(x,y)=M(\lceil nx\rceil,\lceil ny\rceil).
\end{equation}
The map $M\mapsto W_M$ gives a bijection between the sets of $n\times n$ matrices and $\mathcal{F}_n$-measurable functions from $[0,1]^2$ to $\mathbb{R}$.

Let us define
\[\|M\|_{\square}=\frac{1}{n^2} \max_{S,T\subset [n]}\left|\sum_{i\in S}\sum_{j\in T} M(i,j)\right|.\]

Then, using Lemma~\ref{stepnorm}, we see that
\begin{equation}\label{MWMnorm}
 \|M\|_{\square}=\|W_M\|_{\square}.
\end{equation}

Although Chatterjee and Varadhan \cite{chatterjee2011large} used the Szemer\'edi's original version \cite{szemeredi1978regular} of the regularity lemma, for our purposes the following variant will be more convenient. 

\begin{lemma}[Frieze and Kannan \cite{frieze1999quick}]\label{friezekannan}
For every $\varepsilon>0$, there is a $k$ with the following property. Given any $n\times n$ matrix $M$, there is a partition $\mathcal{P}$ of $[n]$ with at most $k$ parts and a matrix $M_0$ such that $M_0$ is constant on the set $S\times T$ for all $S,T\in \mathcal{P}$ and
\[\|M-M_0\|_{\square}\le \varepsilon \max_{1\le i, j\le n} |M(i,j)|.\]
\end{lemma}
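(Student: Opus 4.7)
The plan is to prove the weak regularity lemma via the standard \emph{energy-increment} argument. After dividing $M$ by $c := \max_{i,j}|M(i,j)|$ we may assume $c = 1$. For every partition $\mathcal{Q}$ of $[n]$ let $M_{\mathcal{Q}}$ denote the matrix obtained by averaging $M$ on each rectangle $A \times B$ with $A, B \in \mathcal{Q}$, and define the \emph{energy}
\[ e(\mathcal{Q}) = \frac{1}{n^2} \sum_{i,j=1}^{n} M_{\mathcal{Q}}(i,j)^2, \]
which equals $\|S_{\mathcal{Q}_{/n}} W_M\|_2^2$ under the correspondence \eqref{MWM}. Since $M_{\mathcal{Q}}$ is the orthogonal projection (in the normalised inner product $\langle A, B \rangle = n^{-2} \sum_{i,j} A(i,j) B(i,j)$) of $M$ onto the space of $\mathcal{Q}$-step matrices, $e$ is monotone under refinement, satisfies $0 \le e(\mathcal{Q}) \le \|W_M\|_2^2 \le 1$, and obeys the Pythagorean identity $e(\mathcal{Q}') - e(\mathcal{Q}) = \|M_{\mathcal{Q}'} - M_{\mathcal{Q}}\|_2^2$ whenever $\mathcal{Q}'$ refines $\mathcal{Q}$.

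The iterative algorithm begins from the trivial partition $\mathcal{P}_0 = \{[n]\}$. At stage $t$, set $M_0 := M_{\mathcal{P}_t}$; if $\|M - M_0\|_\square \le \varepsilon$ halt and output $(\mathcal{P}_t, M_0)$. Otherwise choose witnesses $S, T \subset [n]$ satisfying
\[ \left| \frac{1}{n^2} \sum_{i \in S,\, j \in T} \bigl( M(i,j) - M_{\mathcal{P}_t}(i,j) \bigr) \right| > \varepsilon, \]
and take $\mathcal{P}_{t+1}$ to be the common refinement of $\mathcal{P}_t$, $\{S, [n]\setminus S\}$ and $\{T, [n]\setminus T\}$, so $|\mathcal{P}_{t+1}| \le 4|\mathcal{P}_t|$.

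The key estimate is that this refinement step boosts the energy by at least $\varepsilon^2$. Because $\mathbf{1}_{S \times T}$ is constant on rectangles of $\mathcal{P}_{t+1}$ and $M - M_{\mathcal{P}_{t+1}}$ averages to zero on each such rectangle, we have $\langle \mathbf{1}_{S \times T}, M \rangle = \langle \mathbf{1}_{S \times T}, M_{\mathcal{P}_{t+1}} \rangle$; subtracting $\langle \mathbf{1}_{S \times T}, M_{\mathcal{P}_t} \rangle$ from both sides gives
\[ \langle \mathbf{1}_{S \times T},\, M - M_{\mathcal{P}_t} \rangle = \langle \mathbf{1}_{S \times T},\, M_{\mathcal{P}_{t+1}} - M_{\mathcal{P}_t} \rangle. \]
The left-hand side has absolute value exceeding $\varepsilon$, while Cauchy--Schwarz combined with $\|\mathbf{1}_{S \times T}\|_2 \le 1$ bounds the right-hand side by $\|M_{\mathcal{P}_{t+1}} - M_{\mathcal{P}_t}\|_2$. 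Therefore $\|M_{\mathcal{P}_{t+1}} - M_{\mathcal{P}_t}\|_2^2 \ge \varepsilon^2$, and the Pythagorean identity yields $e(\mathcal{P}_{t+1}) - e(\mathcal{P}_t) \ge \varepsilon^2$.

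Since $0 \le e(\mathcal{P}_t) \le 1$, the loop must terminate within $\lceil 1/\varepsilon^2 \rceil$ iterations, yielding a partition with at most $k := 4^{\lceil 1/\varepsilon^2 \rceil}$ parts, and a step-matrix $M_0$ with $\|M - M_0\|_\square \le \varepsilon$; reinstating the factor $c$ recovers the stated inequality. I expect the only substantive step to be the energy-increment inequality in the previous paragraph; the rest is routine orthogonal-projection bookkeeping. Note that $k$ is only singly exponential in $1/\varepsilon^2$, which is possible precisely because we require merely cut-norm approximation rather than $\varepsilon$-regularity of most pairs of blocks, where tower-type lower bounds are known to be unavoidable.
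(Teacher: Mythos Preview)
Your proof is correct and is precisely the standard energy-increment argument for the Frieze--Kannan weak regularity lemma. Note, however, that the paper does not prove this lemma at all: it is stated with a citation to \cite{frieze1999quick} and used as a black box, so there is no ``paper's own proof'' to compare against. Your write-up supplies exactly the argument that reference contains.
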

\begin{lemma}\label{stepregularity}
For any $\varepsilon>0$, there is a $k$ with the following property. Let $W^G$ be an $\mathcal{F}_n$-measurable cochain graphon, then there is a partition $\mathcal{P}$ of $\{1,2,\dots,n\}$ with at most $k$ parts such that 
\[\|W^G-S_{\mathcal{P}_{/n}}W^G\|_\square\le \varepsilon.\]
\end{lemma}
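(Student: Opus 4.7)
The plan is to reduce to the Frieze--Kannan regularity lemma (Lemma~\ref{friezekannan}), which is formulated for a single matrix, by applying it separately to each coordinate $W^g$ ($g\in G$) and then taking the common refinement of the resulting partitions. Since $W^G$ is $\mathcal{F}_n$-measurable with values in $[0,1]$, each $W^g$ corresponds via \eqref{MWM} to an $n\times n$ matrix $M^g$ with entries in $[0,1]$, and by \eqref{MWMnorm} the cut norms agree. I would apply Lemma~\ref{friezekannan} to each $M^g$ with some parameter $\varepsilon'$, chosen in terms of $\varepsilon$ and $|G|$ below, to obtain a partition $\mathcal{P}_g$ of $[n]$ whose size is bounded in terms of $\varepsilon'$ only, together with a block-constant (hence $\mathcal{P}_g$-measurable) matrix $N^g$ with $\|M^g-N^g\|_\square\le\varepsilon'$.

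Next, let $\mathcal{P}$ be the common refinement of $\mathcal{P}_1,\dots,\mathcal{P}_{|G|}$; this is still a partition of $[n]$, and its size is bounded by the product of the sizes of the individual $\mathcal{P}_g$. The point of passing to the refinement is that each approximating function $W_{N^g}$ is automatically $\mathcal{P}_{/n}$-measurable, so Lemma~\ref{factortwo} upgrades the coordinatewise approximation into a cut-norm bound on $\|W^g-S_{\mathcal{P}_{/n}}W^g\|_\square$, at the cost of a factor of $2$. Summing over $g\in G$ and recalling that the cut norm on cochain kernels is defined as $\sum_{g}\|\cdot\|_\square$, the total error is bounded by $2|G|\varepsilon'$, so choosing $\varepsilon'$ of order $\varepsilon/|G|$ gives exactly the desired inequality $\|W^G-S_{\mathcal{P}_{/n}}W^G\|_\square\le\varepsilon$.

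There is no real obstacle here: the argument is a routine ``do it coordinatewise and refine'' reduction, and the only care required is to pack the factors $|G|$ and $2$ into the choice of the Frieze--Kannan parameter. The resulting bound $k$ on the number of parts depends only on $\varepsilon$ and $|G|$ (exponentially in $|G|$, which is harmless since $G$ is fixed), so $k$ is independent of $n$, as required.
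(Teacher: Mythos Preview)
Your proposal is correct and follows essentially the same route as the paper: apply Frieze--Kannan (Lemma~\ref{friezekannan}) to each coordinate $W^g$ with parameter $\varepsilon/(2|G|)$, take the common refinement of the resulting partitions, use Lemma~\ref{factortwo} to pass from the block-constant approximant to the stepping operator at the cost of a factor $2$, and sum over $g\in G$. The only cosmetic difference is that the paper fixes the Frieze--Kannan parameter as $\varepsilon/(2|G|)$ from the outset rather than carrying around a generic $\varepsilon'$.
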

\begin{proof}
Combining Lemma~\ref{friezekannan} with the bijection given in \eqref{MWM} and \eqref{MWMnorm}, we see that there is a $k_0$ such that for all $g\in G$, we have a partition $\mathcal{P}^g$ of $[n]$ and a $\mathcal{P}^g_{/n}$-measurable function $W_0^g:[0,1]^2\to\mathbb{R}$ such that

\[\|W^g-W_0^g\|_\square\le \frac{\varepsilon}{2|G|}.\]
Let $\mathcal{P}$ be the smallest common refinement of $\mathcal{P}^g$ $(g\in G)$. Clearly, $W_0^g$ is also $\mathcal{P}_{/n}$-measurable, so by Lemma~\ref{factortwo}, we see that
\[\|W^g-S_{\mathcal{P}_{/n}}W^g\|_\square\le \frac{\varepsilon}{|G|}\text{ for all $g$}.\]
Therefore,
\[\|W^G-S_{\mathcal{P}_{/n}}W^G\|_\square\le \varepsilon.\]
Since the partition $\mathcal{P}$ has at most $k_0^{|G|}$ parts, the statement follows. 
\end{proof}

Using Lebesgue differentiation theorem, one can prove the following lemma.
\begin{lemma}\label{Lebesgue}
Let $W^G\in L^1(S_G)$. Then $S_{\mathcal{F}_n} W^G$ converge to $W^G$ both almost everywhere and in~$L^1$.
\end{lemma}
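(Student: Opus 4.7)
The plan is to reduce Lemma~\ref{Lebesgue} to a statement about scalar functions on $[0,1]^2$, and then apply the Lebesgue differentiation theorem for the almost-everywhere part and a standard density argument for the $L^1$ part. Since $G$ is finite and $S_{\mathcal{F}_n}$ acts on $W^G$ coordinate by coordinate in $g\in G$, it suffices to prove that for any scalar $W\in L^1([0,1]^2)$, the step functions $S_{\mathcal{F}_n}W$ converge to $W$ both pointwise almost everywhere and in $L^1$. The total-mass and symmetry constraints defining $\mathcal{W}^G_{00}$, $\mathcal{W}^G$, etc.\ play no role here, as the statement is purely about integrable functions.

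For the almost-everywhere convergence, I would invoke the Lebesgue differentiation theorem in $\mathbb{R}^2$ in its ``nicely shrinking sets'' form: for almost every $(x,y)\in [0,1]^2$, if $Q_n$ is a sequence of axis-aligned squares with $(x,y)\in Q_n$ and diameter tending to $0$, then $|Q_n|^{-1}\int_{Q_n}W\to W(x,y)$. For each $(x,y)$, the unique cell of $\mathcal{F}_n\times\mathcal{F}_n$ containing $(x,y)$ is exactly such a square of side length $1/n$, and by definition its average over $W$ equals $(S_{\mathcal{F}_n}W)(x,y)$. This gives the a.e.\ convergence.

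For the $L^1$ convergence, the key observation is that $S_{\mathcal{F}_n}$ is an $L^1$-contraction: by Jensen's inequality, $\|S_{\mathcal{F}_n}W\|_1\le \|W\|_1$. Given $\varepsilon>0$, approximate $W$ by a continuous function $f$ with $\|W-f\|_1<\varepsilon/3$; since $f$ is uniformly continuous on $[0,1]^2$, one has $\|S_{\mathcal{F}_n}f-f\|_\infty\to 0$, and hence $\|S_{\mathcal{F}_n}f-f\|_1<\varepsilon/3$ for all large $n$. Then
\[\|S_{\mathcal{F}_n}W-W\|_1 \le \|S_{\mathcal{F}_n}(W-f)\|_1+\|S_{\mathcal{F}_n}f-f\|_1+\|f-W\|_1 \le 2\|W-f\|_1+\|S_{\mathcal{F}_n}f-f\|_1<\varepsilon.\]
There is no substantial obstacle; the only point worth noting is that the partitions $\mathcal{F}_n$ are not nested as $n$ varies, so one cannot directly appeal to the martingale convergence theorem. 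This is precisely why the Lebesgue differentiation theorem (which requires only bounded eccentricity of the shrinking sets, and not a filtration structure) is the natural tool for the a.e.\ statement.
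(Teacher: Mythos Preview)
Your proposal is correct and follows exactly the approach the paper indicates: the paper does not give a detailed proof but simply states that the lemma follows from the Lebesgue differentiation theorem, which is precisely what you carry out (componentwise in $g$, using bounded eccentricity of the cells of $\mathcal{F}_n\times\mathcal{F}_n$ for the a.e.\ part and the $L^1$-contraction property of $S_{\mathcal{F}_n}$ plus density of continuous functions for the $L^1$ part). Your remark that the partitions $\mathcal{F}_n$ are not nested, so that martingale convergence is unavailable and the Lebesgue differentiation theorem is genuinely needed, is a useful clarification beyond what the paper says.
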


\begin{lemma}\label{graphonreg}
For any $\varepsilon>0$, there is a $k$ with the following property. Let $W^G$ be a cochain graphon, then there is a partition $\mathcal{P}$ of $[0,1]$ with at most $k$ parts such that 
\[\|W^G-S_{\mathcal{P}}W^G\|_\square\le \varepsilon.\]
\end{lemma}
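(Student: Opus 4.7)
The plan is to reduce this statement to the $\mathcal{F}_n$-measurable case already handled by Lemma~\ref{stepregularity}. Let $k_0$ be the integer supplied by Lemma~\ref{stepregularity} when applied with parameter $\varepsilon/2$; I claim that $k = k_0$ works for the present lemma, for any cochain graphon.

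Given an arbitrary cochain graphon $W^G$, I would first invoke Lemma~\ref{Lebesgue} to pick $n$ so large that $\|S_{\mathcal{F}_n} W^G - W^G\|_1 \le \varepsilon/2$. Since $\|V\|_\square \le \|V\|_1$ for any $V \in L^1([0,1]^2)$ and the cochain cut norm is the sum of the per-coordinate cut norms over $g \in G$, this gives
\[\|S_{\mathcal{F}_n} W^G - W^G\|_\square \le \varepsilon/2.\]
Next, observe that $S_{\mathcal{F}_n} W^G$ is itself a cochain graphon: averaging preserves the range $[0,1]$, and the symmetry $W^g(x,y) = W^{-g}(y,x)$ survives since the average of $W^g$ over a product block equals the average of $W^{-g}$ over the transposed block. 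Applying Lemma~\ref{stepregularity} to the $\mathcal{F}_n$-measurable cochain graphon $S_{\mathcal{F}_n} W^G$ then produces a partition $\mathcal{P}$ of $\{1,\dots,n\}$ with at most $k_0$ parts such that
\[\|S_{\mathcal{F}_n} W^G - S_{\mathcal{P}_{/n}} S_{\mathcal{F}_n} W^G\|_\square \le \varepsilon/2.\]

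The key observation that glues everything together is that $\mathcal{P}_{/n}$ is a coarsening of $\mathcal{F}_n$, so the nested stepping collapses: $S_{\mathcal{P}_{/n}} S_{\mathcal{F}_n} W^G = S_{\mathcal{P}_{/n}} W^G$. The triangle inequality then yields $\|W^G - S_{\mathcal{P}_{/n}} W^G\|_\square \le \varepsilon$, and $\mathcal{P}_{/n}$ is a partition of $[0,1]$ with at most $k_0$ parts, which is what is required.

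The point that demands the most care is the uniformity of $k$: the integer $k_0$ must depend only on $\varepsilon$ (and $|G|$), not on $W^G$. This is exactly what Lemma~\ref{stepregularity} guarantees. The subsequent choice of $n$ from Lemma~\ref{Lebesgue} is allowed to depend on $W^G$ because $n$ only controls the fineness of the intermediate approximation; the number of parts of the final partition $\mathcal{P}_{/n}$ is bounded by the $W^G$-independent constant $k_0$.
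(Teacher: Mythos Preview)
Your proof is correct and follows essentially the same approach as the paper: approximate $W^G$ by $S_{\mathcal{F}_n}W^G$ via Lemma~\ref{Lebesgue}, apply Lemma~\ref{stepregularity} to the $\mathcal{F}_n$-measurable approximant, collapse the nested stepping via $S_{\mathcal{P}_{/n}}S_{\mathcal{F}_n}W^G=S_{\mathcal{P}_{/n}}W^G$, and conclude by the triangle inequality. Your write-up is in fact slightly more explicit than the paper's on two points the paper leaves implicit: that $S_{\mathcal{F}_n}W^G$ is again a cochain graphon (so Lemma~\ref{stepregularity} applies), and that the uniformity of $k$ comes from Lemma~\ref{stepregularity} while the $W^G$-dependence of $n$ is harmless.
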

\begin{proof}
By Lemma~\ref{Lebesgue}, if we choose $n$ large enough, then
\[\|W^G-S_{\mathcal{F}_n} W^G\|_\square \le \|W^G-S_{\mathcal{F}_n} W^G\|_1\le \frac{\varepsilon}2.\]

Then Lemma~\ref{stepregularity} provides us a $k$ such that there is a partition $\mathcal{Q}$ of $[n]$ with at most $k$ parts such that $\|S_{\mathcal{F}_n} W^G-S_{\mathcal{Q}_{/n}}S_{\mathcal{F}_n} W^G\|_\square\le\frac{ \varepsilon}{2}.$ Note that $n$ might depend on $W^G$, but $k$ is independent from $W^G$. Observing that $S_{\mathcal{Q}_{/n}}S_{\mathcal{F}_n} W^G=S_{\mathcal{Q}_{/n}} W^G$, we have
\[\|W^G-S_{\mathcal{Q}_{/n}} W^G\|_\square\le \|W^G-S_{\mathcal{F}_n} W^G\|_\square+\|S_{\mathcal{F}_n} W^G-S_{\mathcal{Q}_{/n}} W^G\|_\square\le \varepsilon.\qedhere\]
\end{proof}

\begin{lemma}\label{lemmacompact}
The space $\widetilde{\mathcal{W}}_0^G$ is compact with the cut metric. 
\end{lemma}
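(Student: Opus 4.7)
The plan is to adapt the classical Lovász–Szegedy compactness proof for graphons \cite{lovasz2012large} to the cochain setting, using Lemma~\ref{graphonreg} as the weak regularity input. Since $(\widetilde{\mathcal{W}}_0^G, \delta_\square)$ is a metric space, it suffices to show sequential compactness: from any sequence $(W_n^G)_n$ in $\mathcal{W}_0^G$ I must extract a subsequence whose classes converge in $\delta_\square$ to some $\widetilde{U}^G \in \widetilde{\mathcal{W}}_0^G$.

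First I would build, for each $n$, a nested sequence of partitions of $[0,1]$ with uniformly bounded sizes that approximate $W_n^G$ ever more finely. Concretely, apply Lemma~\ref{graphonreg} with $\varepsilon = 1/k$ to obtain an integer $m_k$ (independent of $n$) and a partition $\mathcal{P}_{n,k}$ with $|\mathcal{P}_{n,k}| \le m_k$ and $\|W_n^G - S_{\mathcal{P}_{n,k}} W_n^G\|_\square \le 1/k$. Define $\mathcal{Q}_{n,k}$ inductively as the common refinement of $\mathcal{Q}_{n,k-1}$ and $\mathcal{P}_{n,k}$; then $|\mathcal{Q}_{n,k}|$ is bounded by $M_k := m_1 m_2 \cdots m_k$, and since $S_{\mathcal{P}_{n,k}} W_n^G$ is $\mathcal{Q}_{n,k}$-measurable, Lemma~\ref{factortwo} gives $\|W_n^G - S_{\mathcal{Q}_{n,k}} W_n^G\|_\square \le 2/k$.

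Next I would align these partitions with interval partitions of $[0,1]$. For each $n$, because the $\mathcal{Q}_{n,k}$ are nested, I can pick a single $\varphi_n \in S_{[0,1]}$ such that, after replacing $W_n^G$ with $(W_n^G)^{\varphi_n}$, every $\mathcal{Q}_{n,k}$ becomes an interval partition $I_{n,k,1}, \ldots, I_{n,k,M_k}$ listed from left to right (with some intervals possibly empty so that the list has exactly $M_k$ entries in a canonical order that is consistent across $k$). The stepped functions $S_{\mathcal{Q}_{n,k}}(W_n^G)^{\varphi_n}$ are then determined by at most $M_k$ interval endpoints in $[0,1]$ together with $M_k^2 \cdot |G|$ step values in $[0,1]$. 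Thus, for each fixed $k$, the data encoding $S_{\mathcal{Q}_{n,k}}(W_n^G)^{\varphi_n}$ lies in a compact subset of a finite-dimensional Euclidean space. A standard diagonal extraction produces a subsequence $(n_i)$ along which, for every $k$, these step functions converge pointwise (hence in $L^\infty$, hence in the cut norm) to some $U_k^G \in \mathcal{W}_0^G$.

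Finally I would assemble a limit cochain graphon. Because $\|W_{n_i}^G - S_{\mathcal{Q}_{n_i,k}} W_{n_i}^G\|_\square \le 2/k$, the triangle inequality in the cut norm yields $\|U_k^G - U_{k'}^G\|_\square \le 2/k + 2/k'$ after passing to the limit in $i$. Hence $(U_k^G)_k$ is Cauchy in the (complete) cut norm, so it converges to some $U^G \in \mathcal{W}_0^G$, and symmetry plus the bounds $0 \le U^G \le 1$ are preserved in the limit. A final three-term triangle inequality,
\[
\delta_\square(W_{n_i}^G, U^G) \le \|W_{n_i}^G - S_{\mathcal{Q}_{n_i,k}} W_{n_i}^G\|_\square + \|S_{\mathcal{Q}_{n_i,k}}(W_{n_i}^G)^{\varphi_{n_i}} - U_k^G\|_\square + \|U_k^G - U^G\|_\square,
\]
goes to $0$ upon choosing $k$ large and then $i$ large, giving $\widetilde{W}_{n_i}^G \to \widetilde{U}^G$ in $\widetilde{\mathcal{W}}_0^G$. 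The main obstacle I expect is the alignment step: carefully choosing the $\varphi_n$ so that the entire nested tower $(\mathcal{Q}_{n,k})_k$ simultaneously becomes a consistent system of interval partitions, and then making sure the diagonal argument extracts a subsequence that works uniformly in $k$. Everything else reduces to iterating cut-norm triangle inequalities and invoking the tools already developed in Section~\ref{regularitysection}.
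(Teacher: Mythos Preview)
Your outline is exactly what the paper does: it simply cites the Lov\'asz--Szegedy compactness argument \cite[Theorem~9.23]{lovasz2012large} and says it goes through verbatim once one uses the cochain regularity lemma (Lemma~\ref{graphonreg}) in place of the graphon one. So the approach matches.

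One genuine technical point needs repair, though. You write that $(U_k^G)_k$ is Cauchy ``in the (complete) cut norm'' and therefore converges in $\mathcal{W}_0^G$. The cut norm on $\mathcal{W}_0^G$ is not known to be complete in any sense that would hand you a limit here; this is precisely the place in the Lov\'asz--Szegedy proof where a different mechanism is used. What actually happens is that your alignment step, done carefully, makes the \emph{limit} interval partitions nested as well (the endpoints of $\mathcal{Q}_{n_i,k}$ converge, and nesting is preserved in the limit). Then $U_k^G = S_{\mathcal{P}_k} U_{k+1}^G$ for the limiting nested partitions $\mathcal{P}_k$, so $(U_k^G)_k$ is a $[0,1]$-valued martingale with respect to the increasing filtration generated by $\mathcal{P}_k\times\mathcal{P}_k$ (coordinatewise in $g$), and converges a.e.\ and in $L^1$ by the martingale convergence theorem. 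That gives you $U^G\in\mathcal{W}_0^G$ and the rest of your argument goes through unchanged.
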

\begin{proof}
This can be proved using the regularity lemma (Lemma~\ref{graphonreg}) along the lines of the proof of the compactness of the space of graphons \cite[Theorem 9.23.]{lovasz2012large}.
\end{proof}

\subsection{The G\"artner-Ellis Theorem}

Let $\mathcal{X}$ be a Hausdorff real topological vector space, and let $\mathcal{X}^*$ be its continuous dual. Let $X_n$ be a sequence of $\mathcal{X}$-valued random variables, and let $\alpha(n)$ be a sequence of positive reals tending to infinity.

For $\lambda\in \mathcal{X}^*$, let \[\bar{\Lambda}(\lambda)=\limsup_{n\to\infty}\frac{1}{\alpha(n)} \log \mathbb{E}\exp\left(\alpha(n)\lambda(X_n)\right).\]

For $x\in \mathcal{X}$, let
\[\bar{\Lambda}^*(x)=\sup_{\lambda\in \mathcal{X}^*} \left(\lambda(x)-\bar{\Lambda}(\lambda)\right).\]

The next theorem can be found in \cite[Theorem 4.5.3]{dembolarge}.
\begin{theorem}\label{GEthm}
Let $F$ be a compact subset of $\mathcal{X}$, then
\[\limsup_{n\to\infty} \,\frac{1}{\alpha(n)}\log \mathbb{P}(X_n\in F)\le -\inf_{x\in F} \bar{\Lambda}^*(x).\]
\end{theorem}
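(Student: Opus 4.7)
The plan is to follow the classical proof of the upper bound in the G\"artner-Ellis theorem, i.e.\ combine an exponential Chebyshev (Markov) bound with a compactness-based finite-cover argument. First I would fix a compact $F\subset\mathcal{X}$ and a small $\varepsilon>0$, and aim to show that every $x\in F$ has an open neighborhood $U_x$ along which the exponential decay rate of $\mathbb{P}(X_n\in U_x)$ is already close to $-\bar\Lambda^*(x)$. Then the compactness of $F$ will let me extract a finite subcover $U_{x_1},\dots,U_{x_N}$, and the union bound together with the elementary observation $\alpha(n)^{-1}\log\sum_{i=1}^N a_{i,n}\to\max_i \limsup \alpha(n)^{-1}\log a_{i,n}$ will conclude the argument.

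The core of the proof is the neighborhood construction. For $x\in F$, by definition of $\bar\Lambda^*(x)$ as a supremum over $\mathcal{X}^*$, pick $\lambda=\lambda_x\in\mathcal{X}^*$ with $\lambda(x)-\bar\Lambda(\lambda)\ge \min(\bar\Lambda^*(x)-\varepsilon,\,1/\varepsilon)$. Since $\lambda$ is continuous, the set
\[U_x=\{y\in\mathcal{X}:\lambda(y)>\lambda(x)-\varepsilon\}\]
is an open neighborhood of $x$. Applying Markov's inequality to the nonnegative random variable $\exp(\alpha(n)\lambda(X_n))$ with threshold $\exp(\alpha(n)(\lambda(x)-\varepsilon))$ gives
\[\mathbb{P}(X_n\in U_x)\le \exp\bigl(-\alpha(n)(\lambda(x)-\varepsilon)\bigr)\,\mathbb{E}\exp(\alpha(n)\lambda(X_n)).\]
Taking $\alpha(n)^{-1}\log$ and $\limsup$, the right-hand side yields
\[\limsup_{n\to\infty}\frac{1}{\alpha(n)}\log\mathbb{P}(X_n\in U_x)\le -\lambda(x)+\varepsilon+\bar\Lambda(\lambda)\le -\min(\bar\Lambda^*(x),1/\varepsilon)+2\varepsilon,\]
by the choice of $\lambda_x$.

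Now, by compactness of $F$, the open cover $\{U_x\}_{x\in F}$ has a finite subcover $U_{x_1},\dots,U_{x_N}$. The union bound $\mathbb{P}(X_n\in F)\le\sum_{i=1}^N \mathbb{P}(X_n\in U_{x_i})$ combined with the previous paragraph yields
\[\limsup_{n\to\infty}\frac{1}{\alpha(n)}\log\mathbb{P}(X_n\in F)\le -\min_{1\le i\le N}\min(\bar\Lambda^*(x_i),1/\varepsilon)+2\varepsilon\le -\min\Bigl(\inf_{x\in F}\bar\Lambda^*(x),\,1/\varepsilon\Bigr)+2\varepsilon.\]
Letting $\varepsilon\downarrow 0$ gives the claimed inequality.

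The only genuine subtlety I expect is handling the case where $\bar\Lambda^*(x)=+\infty$ for some $x\in F$; this is why the truncation by $1/\varepsilon$ in the choice of $\lambda_x$ is needed, so that $\lambda_x$ always exists. A secondary concern is that $\bar\Lambda(\lambda)$ could in principle equal $+\infty$, but then $\lambda(x)-\bar\Lambda(\lambda)=-\infty$, so such $\lambda$ would never be chosen as $\lambda_x$; the supremum in the definition of $\bar\Lambda^*(x)$ is effectively over the set $\{\lambda:\bar\Lambda(\lambda)<\infty\}$. Apart from these minor book-keeping points, the proof is a direct application of exponential Markov plus compactness, which is why the statement can be quoted verbatim from~\cite{dembolarge}.
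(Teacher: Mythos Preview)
Your argument is correct and is exactly the standard proof of the G\"artner--Ellis upper bound as given in Dembo--Zeitouni~\cite[Theorem~4.5.3]{dembolarge}. The paper itself does not prove this statement at all: it merely quotes the theorem and cites that reference, so you have supplied more than the paper does; there is nothing to compare against.
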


The next theorem can be found in \cite[Theorem 4.5.20 (b)]{dembolarge}. Note that we replaced the assumptions of \cite[Theorem 4.5.20]{dembolarge} with some stronger assumptions for the sake of simplicity. 

\begin{theorem}\label{GEthmb}
Assume that there is a compact subset $K$ of $\mathcal{X}$ such that $X_n\in K$ for all $n$, and
\[\lim_{n\to\infty}\frac{1}{\alpha(n)} \log \mathbb{E}\exp\left(\alpha(n)\lambda(X_n)\right)\]
exists and finite for all $\lambda\in \mathcal{X}^*$. Let $\mathcal{F}$ be the set of exposed points of $\bar{\Lambda}^*$, and let $E$ be an open subset of $\mathcal{X}$. Then
\[\liminf_{n\to\infty} \,\frac{1}{\alpha(n)}\log \mathbb{P}(X_n\in E)\ge -\inf_{x\in E\cap \mathcal{F}} \bar{\Lambda}^*(x).\]
\end{theorem}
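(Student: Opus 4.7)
The plan is to carry out the classical exponential-tilting argument for the Gärtner-Ellis lower bound, taking advantage of the simplified hypotheses. Fix $x \in E \cap \mathcal{F}$ and let $\lambda^* \in \mathcal{X}^*$ be an exposing hyperplane for $x$, so $\lambda^*(x) - \bar{\Lambda}^*(x) > \lambda^*(y) - \bar{\Lambda}^*(y)$ for every $y \neq x$. Since $\Lambda(\lambda) := \lim_n \alpha(n)^{-1} \log \mathbb{E}\exp(\alpha(n)\lambda(X_n))$ exists and is finite for every $\lambda \in \mathcal{X}^*$ by hypothesis, we can introduce the tilted measures $d\mathbb{Q}_n = Z_n^{-1} e^{\alpha(n)\lambda^*(X_n)}\, d\mathbb{P}$, where $Z_n = \mathbb{E}\exp(\alpha(n)\lambda^*(X_n))$, so that $\alpha(n)^{-1} \log Z_n \to \Lambda(\lambda^*)$.

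The first step is to show $X_n$ concentrates at $x$ under $\mathbb{Q}_n$. A direct computation shows that under $\mathbb{Q}_n$ the logarithmic moment generating function of $X_n$ converges to $\tilde{\Lambda}(\mu) = \Lambda(\lambda^* + \mu) - \Lambda(\lambda^*)$, whose Fenchel transform is $\tilde{\Lambda}^*(y) = \bar{\Lambda}^*(y) - \lambda^*(y) + \Lambda(\lambda^*)$. Combined with the convex-duality identity $\bar{\Lambda}^*(x) = \lambda^*(x) - \Lambda(\lambda^*)$ valid at exposed points, the exposed-point inequality gives that $\tilde{\Lambda}^*$ vanishes uniquely at $x$ and is strictly positive elsewhere. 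Applying the already-proved upper bound (Theorem~\ref{GEthm}) to $\mathbb{Q}_n$ on the compact set $K \setminus U$, for any open neighborhood $U$ of $x$, then yields $\mathbb{Q}_n(X_n \in U) \to 1$.

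The second step is to untilt. For an open neighborhood $U \subseteq E$ of $x$,
$$\mathbb{P}(X_n \in U) = Z_n \int_{\{X_n \in U\}} e^{-\alpha(n)\lambda^*(X_n)}\, d\mathbb{Q}_n \geq Z_n \exp\!\left(-\alpha(n)\sup_{y \in U} \lambda^*(y)\right) \mathbb{Q}_n(X_n \in U).$$
Taking $\alpha(n)^{-1}\log$, using continuity of $\lambda^*$ to shrink $U \downarrow \{x\}$, and using the first step yields
$$\liminf_n \frac{1}{\alpha(n)} \log \mathbb{P}(X_n \in E) \geq \Lambda(\lambda^*) - \lambda^*(x) = -\bar{\Lambda}^*(x),$$
and the infimum over $x \in E \cap \mathcal{F}$ completes the proof.

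The main obstacle I anticipate is verifying the convex-duality identity $\bar{\Lambda}^*(x) = \lambda^*(x) - \Lambda(\lambda^*)$ at exposed points. The inequality $\bar{\Lambda}^*(x) \geq \lambda^*(x) - \Lambda(\lambda^*)$ is immediate from the definition of the Fenchel transform, but the reverse requires that $\Lambda$ coincide with its double Fenchel transform at $\lambda^*$; the simplifying hypothesis that $\Lambda$ is finite on the whole of $\mathcal{X}^*$ — so that $\lambda^*$ is automatically interior to the effective domain of $\Lambda$ — is precisely what lets one bypass the "essential smoothness" assumption used in the more general Dembo-Zeitouni Theorem 4.5.20 and secure this identity cleanly.
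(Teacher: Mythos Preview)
The paper does not supply its own proof of this statement: it is quoted as \cite[Theorem 4.5.20 (b)]{dembolarge}, with the remark that the hypotheses have been strengthened for simplicity. So there is no in-paper argument to compare against; your sketch is the standard exponential-tilting proof that appears in Dembo--Zeitouni.

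One comment on the obstacle you flag: the full identity $\bar{\Lambda}^*(x) = \lambda^*(x) - \Lambda(\lambda^*)$ is not actually needed. In the concentration step you only need $\tilde{\Lambda}^*(y) > 0$ for $y \in K \setminus U$; since $\tilde{\Lambda}^*(z) \geq 0\cdot z - \tilde{\Lambda}(0) = 0$ for every $z$, and the exposed-point inequality yields $\tilde{\Lambda}^*(y) > \tilde{\Lambda}^*(x) \geq 0$ for all $y \neq x$, this follows without knowing that $\tilde{\Lambda}^*(x)$ vanishes. In the untilting step you reach $\liminf_n \alpha(n)^{-1}\log\mathbb{P}(X_n \in E) \geq \Lambda(\lambda^*) - \lambda^*(x)$, and the trivial direction $\bar{\Lambda}^*(x) \geq \lambda^*(x) - \Lambda(\lambda^*)$, immediate from the definition of the Fenchel transform, already gives $\Lambda(\lambda^*) - \lambda^*(x) \geq -\bar{\Lambda}^*(x)$. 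So the argument closes with the easy inequality alone, and your worry about biconjugation and interiority can be dropped.
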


\subsection{Weak large deviation principle}\label{secweakLDP}

Each $\phi\in L^1 (S_G)$ determines a linear functional $Z_{\phi}:\mathcal{W}^G\to \mathbb{R}$ by the formula
\[Z_{\phi}(W^G)=\sum_{g\in G}\int_{[0,1]^2} W(x,y,g)\phi(x,y,g) dxdy.\]

Let \begin{align*}L^1_{\sym} (S_G)&=\{W^G\in L^1(S_G)\,:\,W^G\text{ is symmetric}\},\text{ and }\\
 \mathcal{H}&=\{Z_\phi\,:\,\phi\in L^1_{\sym} (S_G)\}.
 \end{align*}

Let us consider $\mathcal{W}^G$ with the weak topology determined by $\mathcal{H}$, that is, with the coarsest topology such that all the maps $Z_{\phi}\in \mathcal{H}$ are continuous.

Note that it the symmetry of $\phi$ is not a significant restriction as the following straightforward lemma shows.
\begin{lemma}\label{symmetrization}
Let $\phi\in L^1(S_G)$, and let us define
\[\overline{\phi}(x,y,g)=\frac{\phi(x,y,g)+\phi(y,x,-g)}2.\]
Then $\overline{\phi}\in L^1_{\sym} (S_G)$ and $Z_{\phi}=Z_{\overline{\phi}}$. 
\end{lemma}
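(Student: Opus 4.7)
The plan is to verify the three assertions of the lemma in turn, all of which reduce to direct calculation using the measure-preserving involution $\sigma:S_G\to S_G$ defined by $\sigma(x,y,g)=(y,x,-g)$.

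First I would check that $\overline{\phi}$ is symmetric in the sense of the definition, i.e.\ $\overline{\phi}(x,y,g)=\overline{\phi}(y,x,-g)$ for almost every $(x,y,g)\in S_G$. This is immediate from the formula for $\overline{\phi}$, since averaging a function with its pullback under the involution $\sigma$ produces a $\sigma$-invariant function, and $\sigma$-invariance is exactly the symmetry condition imposed on elements of $L^1_{\sym}(S_G)$. Integrability of $\overline{\phi}$ follows from the triangle inequality together with the fact that $\sigma$ is measure-preserving, so $\|\phi\circ\sigma\|_1=\|\phi\|_1$ and hence $\|\overline{\phi}\|_1\le \|\phi\|_1<\infty$.

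Next I would establish $Z_\phi=Z_{\overline{\phi}}$ as functionals on $\mathcal{W}^G$. Fix $W^G\in\mathcal{W}^G$. Applying the change of variables $(x,y,g)\mapsto (y,x,-g)$ in the integral/sum defining $Z_\phi(W^G)$, and then using that $W^G$ is symmetric (so $W^G(y,x,-g)=W^G(x,y,g)$), I obtain
\[Z_\phi(W^G)=\sum_{g\in G}\int_{[0,1]^2} W^G(x,y,g)\,\phi(y,x,-g)\,dxdy.\]
Averaging this identity with the original definition of $Z_\phi(W^G)$ yields
\[Z_\phi(W^G)=\sum_{g\in G}\int_{[0,1]^2} W^G(x,y,g)\cdot \frac{\phi(x,y,g)+\phi(y,x,-g)}{2}\,dxdy=Z_{\overline{\phi}}(W^G),\]
as required.

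There is no real obstacle: the argument is a one-line application of the $\sigma$-invariance of Lebesgue$\times$counting measure on $S_G$ combined with the symmetry condition imposed on cochain kernels. The only point to be careful about is to perform the change of variables on the full triple $(x,y,g)$ simultaneously (swapping coordinates in $[0,1]^2$ and negating the $G$-coordinate), rather than on the $[0,1]^2$ factor alone, so that the symmetry relation for $W^G$ can be applied directly to rewrite the integrand.
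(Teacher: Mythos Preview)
Your proof is correct and is exactly the direct verification the paper has in mind; the lemma is stated in the paper without proof as ``the following straightforward lemma,'' and your argument via the measure-preserving involution $\sigma(x,y,g)=(y,x,-g)$ together with the symmetry of $W^G$ is precisely the intended one.
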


The next lemma is an easy consequence of Lemma~\ref{lemmaangleWG}. 
\begin{lemma}\label{Lemmacoarser}
 The weak topology on $\mathcal{W}^G_0$ is coarser than the topology given by the cut norm, that is, for all $\phi\in L^1_{\sym}(S_G)$, $Z_\phi$ is a continuous function on $\mathcal{W}^G_0$ with respect to the cut norm. 
\end{lemma}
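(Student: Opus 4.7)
The plan is to observe that the weak topology on $\mathcal{W}^G_0$ is by definition the coarsest topology making every $Z_\phi$ continuous, so proving that the weak topology is coarser than the cut-norm topology reduces to showing that each $Z_\phi$ with $\phi \in L^1_{\sym}(S_G)$ is continuous on $\mathcal{W}^G_0$ with respect to the cut norm. Since $(\mathcal{W}^G_0,\delta_\square)$ is a (pseudo)metric space, it is enough to verify sequential continuity.

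Concretely, I would fix $\phi \in L^1_{\sym}(S_G)$ and take an arbitrary sequence $W^G_n \in \mathcal{W}^G_0$ with $W^G_n \to W^G$ in the cut norm, where $W^G \in \mathcal{W}^G_0$. The key point is that every cochain graphon is bounded by $1$, so $\|W^G_n\|_\infty \le 1$ for all $n$ and $\|W^G\|_\infty \le 1$, giving a uniform $L^\infty$ bound. This is exactly the hypothesis needed to apply Lemma~\ref{lemmaangleWG}, with the roles being: the uniformly bounded cut-norm-convergent sequence is $W^G_n$, and for the $L^1$-convergent sequence we take the trivial constant sequence $\phi_n \equiv \phi$, which converges to $\phi$ in $L^1(S_G)$.

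Applying Lemma~\ref{lemmaangleWG} then yields
\[\lim_{n\to\infty} \langle W^G_n, \phi\rangle = \langle W^G, \phi\rangle,\]
which is precisely $Z_\phi(W^G_n) \to Z_\phi(W^G)$. This establishes cut-norm continuity of $Z_\phi$, and since this holds for every $\phi \in L^1_{\sym}(S_G)$, the weak topology generated by these functionals is indeed coarser than the cut-norm topology.

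There is really no obstacle here; the proof is essentially a one-line deduction from Lemma~\ref{lemmaangleWG}. The only conceptual point worth flagging is that the uniform $L^\infty$ bound coming from the definition of $\mathcal{W}^G_0$ (values in $[0,1]$) is what allows $\phi$ to be merely integrable rather than bounded; this is precisely why the lemma is stated for $\mathcal{W}^G_0$ (as opposed to all of $\mathcal{W}^G$) and for $\phi \in L^1_{\sym}(S_G)$.
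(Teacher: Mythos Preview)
Your proposal is correct and matches the paper's own proof, which simply states that the lemma is an easy consequence of Lemma~\ref{lemmaangleWG}. One tiny notational slip: you write $(\mathcal{W}^G_0,\delta_\square)$, but the relevant topology here is the one induced by the cut \emph{norm} $\|\cdot\|_\square$, not the cut distance $\delta_\square$; this does not affect the argument since both are metrizable and you correctly use cut-norm convergence thereafter.
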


For $W^G\in \mathcal{W}_0^G$ and $r>0$, let
\[
B_\square(W^G,r)=\{U^G\in \mathcal{W}_0^G \,:\, \|W^G-U^G\|_\square\le r\}.
\]

In the next lemma we collect some of the basic properties of the weak topology on $\mathcal{W}^G_0$.
\begin{lemma}\label{topology}\hfill
\begin{enumerate}[(a)]
\item $\mathcal{W}_0^G$ is weakly compact.
\item For any $W^G\in \mathcal{W}_0^G$ and $r>0$, $B_\square(W^G,r)$ is weakly compact.
\item $(\mathcal{W}^G)^*=\mathcal{H}$, where $(\mathcal{W}^G)^*$ denotes the continuous dual of $\mathcal{W}^G$ with respect to the weak topology.
\end{enumerate}
\end{lemma}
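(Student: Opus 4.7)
The plan is to identify $\mathcal{W}^G$ with the symmetric subspace $L^\infty_{\sym}(S_G)$ of $L^\infty(S_G)$ and observe, via Lemma~\ref{symmetrization}, that the weak topology on $\mathcal{W}^G$ coincides with the restriction to $L^\infty_{\sym}$ of the weak-$*$ topology that $L^\infty(S_G)$ inherits from its predual $L^1(S_G)$. Indeed, any $\phi \in L^1(S_G)$ satisfies $Z_\phi(W^G) = Z_{\overline\phi}(W^G)$ for symmetric $W^G$, with $\overline\phi \in L^1_{\sym}(S_G)$, so pairing with $L^1(S_G)$ and with $L^1_{\sym}(S_G)$ produces the same family of functionals on $\mathcal{W}^G$. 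With this identification, (a) and (b) become classical compactness statements via Banach--Alaoglu, while (c) becomes a standard duality lemma.

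For (a), I would note that $\mathcal{W}^G_0$ is contained in the closed unit ball of $L^\infty(S_G)$, which is weak-$*$ compact by Banach--Alaoglu (here $S_G$ carries a finite measure, so $L^1(S_G)$ is a separable Banach space and the theorem applies). The defining conditions of $\mathcal{W}^G_0$ --- $W^G\ge 0$, $W^G\le 1$, and $W^G(x,y,g)=W^G(y,x,-g)$ --- can each be written as $Z_\phi(W^G)\ge 0$, $\le 0$ or $=0$ for suitable non-negative or antisymmetric test functions $\phi\in L^1(S_G)$, so $\mathcal{W}^G_0$ is a weak-$*$ closed subset of the ball, hence weak-$*$ compact.

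For (b), I would show that the cut norm is weakly lower semicontinuous on $\mathcal{W}^G$. Each map $U^G\mapsto \int_{S\times T} U^g(x,y)\,dx\,dy$ is of the form $Z_{\phi_{S,T,g}}$ for the symmetrization of an indicator, hence is weakly continuous; taking absolute values preserves continuity, taking a supremum over measurable $S, T$ preserves lower semicontinuity, and the finite sum over $g\in G$ preserves it as well. Thus $U^G\mapsto \|W^G-U^G\|_{\square}$ is weakly lsc, $B_{\square}(W^G,r)$ is a weakly closed subset of $\mathcal{W}^G_0$, and by (a) it is weakly compact.

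Part (c) I expect to be the main obstacle, although it is ultimately a textbook application of linear duality. The inclusion $\mathcal{H}\subseteq (\mathcal{W}^G)^*$ is built into the definition of the weak topology. For the reverse inclusion, let $L\in (\mathcal{W}^G)^*$. Continuity of $L$ at the origin provides $\phi_1,\dots,\phi_k\in L^1_{\sym}(S_G)$ and $\varepsilon>0$ such that $\max_i |Z_{\phi_i}(W^G)|<\varepsilon$ implies $|L(W^G)|<1$. Scaling $W^G$ by arbitrary $\lambda>0$ shows that $\bigcap_i \ker Z_{\phi_i}\subseteq \ker L$; the classical linear-algebra fact that a linear functional vanishing on the common kernel of finitely many others is a linear combination of them then yields constants $c_1,\dots,c_k$ with $L=\sum_i c_i Z_{\phi_i}=Z_{\sum_i c_i\phi_i}\in \mathcal{H}$. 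Injectivity of $\phi\mapsto Z_\phi$ (needed only to conclude that $\mathcal{H}$ is genuinely parametrized by $L^1_{\sym}(S_G)$) follows from the fact that $L^\infty_{\sym}(S_G)$ separates points of $L^1_{\sym}(S_G)$, which one sees by testing against symmetrized indicators of measurable rectangles.
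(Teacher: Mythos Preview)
Your proof is correct and follows essentially the same route as the paper. For (a) the paper achieves the same Banach--Alaoglu argument slightly more cleanly by identifying $\mathcal{W}^G$ with $L^\infty(\Delta\times G)$ (restricting to the triangle $\Delta=\{x\le y\}$), which makes the symmetry constraint automatic and avoids your separate closedness check; for (b) the arguments are identical, and for (c) the paper simply cites the standard result \cite[Theorem~B.8]{dembolarge} that you have written out.
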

\begin{proof}
(a) Let $\Delta=\{(x,y)\,:\,0\le x\le y\le 1\}$, and $S^{1/2}_G=\Delta\times G$. Let us endow $S^{1/2}_G$ with two times the product of the Lebesgue measure on $\Delta$ and the counting measure on $G$. Then $L^1_{\sym}(S_G)\cong L^1(S_G^{1/2})$ and $\mathcal{W}^G\cong L^\infty(S_G^{1/2})$. Since $L^1(S_G^{1/2})^*=L^\infty(S_G^{1/2})$, it follows that $\mathcal{W}^G$ with the weak topology is isomorphic to $L^\infty(S_G^{1/2})$ with the weak$^*$ topology. Thus, $\mathcal{W}_0^G$ is weakly compact by the Banach–Alaoglu theorem.

(b) Let $\mathcal{Y}$ be the set of all function $\phi\in L_1(S_G)$ which can be obtained as follows. For all $g\in G$, let $S_g$ and $T_g$ be a measurable subset of $[0,1]$ and let $s_g\in\{+1,-1\}$, and let
\[\phi(x,y,g)=s_g \mathbbm{1}((x,y)\in S_g\times T_g).\]
Using the notations of Lemma~\ref{symmetrization}, for all $U^G\in \mathcal{W}^G$, we have
\[\|U^G\|_\square=\sup_{\phi\in \mathcal{Y}} Z_{\phi}(U^G)=\sup_{\phi\in \mathcal{Y}} Z_{\overline{\phi}}(U^G).\]
Thus,
\[B_\square(W^G,r)=\mathcal{W}_0^G\cap\left(W^G+ \bigcap_{\phi\in \mathcal{Y}} Z_{\overline{\phi}}^{-1}((-\infty,r])\right).\]
Therefore, $B_\square(W^G,r)$ is closed since it is the intersection of closed sets. Combining this with part (a), $B_\square(W^G,r)$ is a closed subset of a compact set, thus, it is compact.

(c) See \cite[Theorem B.8]{dembolarge}.
\end{proof}

Let $R_n$ be defined as in Theorem~\ref{largediv0}.

\begin{lemma}\label{explimit}
Let $\phi\in L^1_{\sym} (S_G)$, then
\[\lim_{n\to\infty} \frac{1}{n^2}\log\mathbb{E} \exp(n^2 Z_\phi(R_n))=\frac{1}2\int_{[0,1]^2} \log \left(\sum_{g\in G}\nu(g)\exp\left(2\phi(x,y,g)\right)\right)dxdy.\]

Here the right hand side is finite.
\end{lemma}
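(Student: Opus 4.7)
The plan is to compute the moment generating function directly using the independence structure of $F_{n,\nu}$, and then pass to the limit via the Lipschitz property of the log-sum-exp function together with the $L^1$-convergence from Lemma~\ref{Lebesgue}. First I would unfold the definitions: writing $I_u=((u-1)/n,u/n]$ and $\phi_{u,v}^g:=\int_{I_u\times I_v}\phi(x,y,g)\,dxdy$, the fact that $R_n=W^G_{F_n}$ is the normalised indicator of the graph of $F_n$ gives $Z_\phi(R_n)=\sum_{u\ne v}\phi_{u,v}^{F_n(u,v)}$. The symmetries $\phi(x,y,g)=\phi(y,x,-g)$ and $F_n(v,u)=-F_n(u,v)$ collapse this to $Z_\phi(R_n)=2\sum_{u<v}\phi_{u,v}^{F_n(u,v)}$.

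Since $\{F_n(u,v)\}_{u<v}$ are i.i.d.\ with law $\nu$, independence gives
\[\mathbb{E}\exp(n^2 Z_\phi(R_n))=\prod_{u<v}\sum_{g\in G}\nu(g)\exp(2n^2\phi_{u,v}^g),\]
and recognising $n^2\phi_{u,v}^g$ as the value of $(S_{\mathcal{F}_n}\phi)(x,y,g)$ on $I_u\times I_v$ (together with the symmetry, which ensures each unordered pair is double-counted when integrating over $[0,1]^2\setminus E$) yields
\[\frac{1}{n^2}\log\mathbb{E}\exp(n^2 Z_\phi(R_n))=\frac{1}{2}\int_{[0,1]^2\setminus E}\log\Bigl(\sum_{g\in G}\nu(g)e^{2(S_{\mathcal{F}_n}\phi)(x,y,g)}\Bigr)\,dxdy.\]

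The key observation for passing to the limit is that $F\colon\mathbb{R}^G\to\mathbb{R}$, $F(z)=\log\sum_g\nu(g)e^{2z_g}$, is $2$-Lipschitz with respect to the $\ell^1$-norm on $\mathbb{R}^G$, because its gradient entries $2\nu(g)e^{2z_g}/\sum_h\nu(h)e^{2z_h}$ are nonnegative and sum to $2$. Combined with the $L^1$-convergence $S_{\mathcal{F}_n}\phi\to\phi$ from Lemma~\ref{Lebesgue}, this gives
\[\int_{[0,1]^2}\bigl|F((S_{\mathcal{F}_n}\phi)(x,y,\cdot))-F(\phi(x,y,\cdot))\bigr|\,dxdy\le 2\|S_{\mathcal{F}_n}\phi-\phi\|_{L^1(S_G)}\to 0.\]
The diagonal strip $E$ is handled by the pointwise bound $|F(z)|\le 2\|z\|_1$ (since $F(0)=0$): using that stepping preserves integrals over any union of cells of $\mathcal{F}_n$, one gets $\int_E|F(S_{\mathcal{F}_n}\phi(x,y,\cdot))|\,dxdy\le 2\sum_g\int_E S_{\mathcal{F}_n}|\phi^g|=2\int_E\sum_g|\phi^g|\to 0$, since $|E|=1/n\to 0$ and $\phi\in L^1(S_G)$. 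Finiteness of the right-hand side follows from the same estimate $|F(\phi(x,y,\cdot))|\le 2\sum_g|\phi(x,y,g)|\in L^1([0,1]^2)$.

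The main obstacle is that $\phi$ is only assumed $L^1$, so a naive dominated-convergence argument applied to $\exp(2(S_{\mathcal{F}_n}\phi))$ fails — $\exp(2\phi)$ need not even be integrable. The Lipschitz property of log-sum-exp in $\ell^1$ circumvents this entirely, reducing the convergence of the log-moment-generating integrals to the $L^1$-convergence already supplied by Lemma~\ref{Lebesgue}.
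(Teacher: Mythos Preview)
Your proof is correct and follows essentially the same route as the paper: factorize $\mathbb{E}\exp(n^2 Z_\phi(R_n))$ using the independence of $\{F_n(u,v)\}_{u<v}$, recognize the block averages as values of $S_{\mathcal{F}_n}\phi$, and then pass to the limit via the Lipschitz property of the log-sum-exp map together with Lemma~\ref{Lebesgue}, handling the diagonal strip and the finiteness claim with the same pointwise bound. The only cosmetic difference is that you invoke the symmetry of $\phi$ at the outset to collapse $\sum_{u\ne v}$ into $2\sum_{u<v}$, whereas the paper carries both terms $\phi(s,t,g)+\phi(t,s,-g)$ and uses the symmetry a step later.
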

\begin{proof}
For $1\le i,j\le n$, let \[D_n(i,j)=\left(\frac{i-1}n,\frac{i}n\right]\times \left(\frac{j-1}n,\frac{j}n\right],\]
and let $E_n=\cup_{i=1}^n \left(\frac{i-1}n,\frac{i}n\right]^2$.

Note that $\left(R_n\restriction \left(D_n(i,j)\cup D_n(j,i)\right)\times G\right)_{1\le i<j\le n}$ are independent. Moreover, $R_n$ vanishes on $E_n\times G$. 

Thus,
\begin{align*}\mathbb{E} \exp(n^2 Z_\phi(R_n))&=\prod_{1\le i<j\le n} \mathbb{E}\exp\left(n^2 \sum_{g\in G} \int_{D_n(i,j)\cup D_n(j,i)} \phi(s,t,g)R_n^g(s,t) dsdt\right)\\&=\prod_{1\le i<j\le n}\sum_{g\in G}\nu(g)\exp\left(n^2 \int_{D_n(i,j)} \left(\phi(s,t,g)+\phi(t,s,-g)\right) dsdt\right). 
\end{align*}
Observe that for any $(x,y)\in D_n(i,j)$, we have \[n^2 \int_{D_n(i,j)} \left(\phi(s,t,g)+\phi(t,s,-g)\right) dsdt=2\phi_n(x,y,g)=2\phi_n(y,x,-g),\] where $\phi_n=S_{\mathcal{F}_n} \phi$. For the definition of $S_{\mathcal{F}_n} \phi$, see Section~\ref{regularitysection}.

Thus,
\begin{multline*}
\log\left(\sum_{g\in G}\nu(g)\exp\left(n^2 \int_{D_n(i,j)} \left(\phi(s,t,g)+\phi(t,s,-g)\right) dsdt\right)\right)\\=\frac{n^2}2 \int_{D_n(i,j)} \log \left(\sum_{g\in G}\nu(g)\exp\left(2\phi_n(x,y,g)\right)\right)+\log \left(\sum_{g\in G}\nu(g)\exp\left(2\phi_n(y,x,-g)\right)\right)dxdy.
\end{multline*}

Therefore,
\[\frac{1}{n^2}\log \mathbb{E} \exp(n^2 Z_\phi(R_n))=\frac{1}2\int_{[0,1]^2\setminus E_n} \log \left(\sum_{g\in G}\nu(g)\exp\left(2\phi_n(x,y,g)\right)\right)dxdy.\]

The map 
\[(x_g)_{g\in G}\mapsto \log \left(\sum_{g\in G}\nu(g)\exp\left(2x_g\right)\right)\]
is Lipschitz, $\phi_n$ converges to $\phi$ in $L^1$ by Lemma~\ref{Lebesgue}, so
\begin{multline*}\lim_{n\to\infty}\int_{[0,1]^2} \log \left(\sum_{g\in G}\nu(g)\exp\left(2\phi_n(x,y,g)\right)\right)dxdy\\=\int_{[0,1]^2} \log \left(\sum_{g\in G}\nu(g)\exp\left(2\phi(x,y,g)\right)\right)dxdy.
\end{multline*}
Also, since the measure of $E_n$ goes to zero it is easy to see that
\[\lim_{n\to\infty}\int_{E_n} \log \left(\sum_{g\in G}\nu(g)\exp\left(2\phi_n(x,y,g)\right)\right)dxdy=0.\]
Thus, the first statement follows.

The second statement follows easily from the fact that $\|\phi\|_1<\infty$ and
\[\log \left(\sum_{g\in G}\nu(g)\exp\left(2\phi(x,y,g)\right)\right)\le 2\max_{g\in G} |\phi(x,y,g)|.\qedhere\]
\end{proof}

Let us consider
\[\bar{\Lambda}^*_{\nu}(W^G)=\sup_{\phi\in L^1_{\sym} (S_G)} \left(Z_\phi(W^G)-\frac{1}2\int_{[0,1]^2} \log \left(\sum_{g\in G}\nu(g)\exp\left(2\phi(x,y,g)\right)\right)dxdy\right).\]

Recall that $I_{\nu}$ was defined in \eqref{ratefunctiondef}.

\begin{lemma}\label{lambdanu}
For all $W^G\in \mathcal{W}^G_0$, we have
\[\bar{\Lambda}^*_{\nu}(W^G)=I_{\nu}(W^G).\]
\end{lemma}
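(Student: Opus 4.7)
This is a Legendre transform / Donsker--Varadhan computation carried out pointwise in $(x,y)$. The underlying identity is that for any probability vector $p$ on $G$,
\[\sum_g p_g \log\frac{p_g}{\nu(g)} = \sup_{\psi \in \mathbb{R}^G}\Bigl[\sum_g p_g \psi_g - \log \sum_g \nu(g) e^{\psi_g}\Bigr],\]
while the right-hand side equals $+\infty$ when $p$ is not a probability vector on $G$ (e.g.\ by taking $\psi$ constant in $g$). The plan is to lift this to the functional level with $\psi(x,y,\cdot) = 2\phi(x,y,\cdot)$, treating the upper and lower bounds separately and handling the case $W^G \notin \mathcal{W}^G_{00}$ by a direct construction.

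\textbf{Upper bound.} For $W^G \in \mathcal{W}^G_{00}$, $(W^g(x,y))_{g \in G}$ is a probability distribution for a.e.\ $(x,y)$, so the pointwise Donsker--Varadhan inequality with $\psi = 2\phi(x,y,\cdot)$ gives
\[\sum_g W^g(x,y)\cdot 2\phi(x,y,g) - \log\sum_g \nu(g)e^{2\phi(x,y,g)} \le \sum_g W^g(x,y)\log\frac{W^g(x,y)}{\nu(g)}.\]
Verifying integrability of both sides for $\phi \in L^1_{\sym}(S_G)$ is a short computation: the log-moment-generating term is sandwiched between $\log\nu_{\min} - 2\sum_g|\phi(x,y,g)|$ and $\log|G| + 2\sum_g|\phi(x,y,g)|$, both in $L^1([0,1]^2)$. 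Integrating and taking the supremum over $\phi$ produces $\bar{\Lambda}^*_\nu(W^G) \le I_\nu(W^G)$.

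\textbf{Lower bound, and the main obstacle.} The natural extremizer $\phi(x,y,g) = \tfrac12\log(W^g(x,y)/\nu(g))$ saturates the pointwise DV inequality but fails to be in $L^1$ when $W^g$ vanishes on a positive measure set; this is the key technical difficulty. I will regularize by $V^g_\varepsilon := (1-\varepsilon)W^g + \varepsilon \nu(g)$, which is symmetric, lies in $\mathcal{W}^G_{00}$, and is bounded in $[\varepsilon\nu_{\min},\, 1]$, so that $\phi_\varepsilon(x,y,g) := \tfrac12 \log(V^g_\varepsilon(x,y)/\nu(g))$ is in $L^\infty_{\sym}(S_G)$. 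A direct computation using $\sum_g V^g_\varepsilon = 1$ shows the log-MGF collapses:
\[\sum_g \nu(g) e^{2\phi_\varepsilon(x,y,g)} \;=\; \sum_g V^g_\varepsilon(x,y) \;=\; 1,\]
so the associated functional value equals $\tfrac12 \int_{[0,1]^2} \sum_g W^g \log(V^g_\varepsilon/\nu(g))\,dx\,dy$. Dominated convergence --- using $V^g_\varepsilon \ge (1-\varepsilon)W^g$ to obtain the uniform bound $|W^g \log V^g_\varepsilon| \le -W^g \log W^g + W^g \log\tfrac{1}{1-\varepsilon} + W^g|\log\nu_{\min}|$ --- then sends $\varepsilon \downarrow 0$ and gives $\bar{\Lambda}^*_\nu(W^G) \ge I_\nu(W^G)$.

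\textbf{The non-normalized case.} If $W^G \in \mathcal{W}^G_0 \setminus \mathcal{W}^G_{00}$, then $I_\nu(W^G) = +\infty$, and $\sigma(x,y) := \sum_g W^g(x,y) - 1$, which is symmetric in $(x,y)$ by symmetry of $W^G$ and $\nu$, is nonzero on a set of positive measure. Choosing a symmetric measurable set $A \subset [0,1]^2$ and a sign $s \in \{\pm 1\}$ with $s\int_A \sigma\,dx\,dy > 0$, the $g$-independent symmetric test function $\phi(x,y,g) = \tfrac{c}{2}\, s\, \mathbbm{1}_A(x,y)$ satisfies, via $\sum_g \nu(g) = 1$, the identities $F(\phi) = \tfrac{cs}{2}|A|$ and $Z_\phi(W^G) = \tfrac{cs}{2}\int_A \sum_g W^g\,dx\,dy$, so $Z_\phi(W^G) - F(\phi) = \tfrac{cs}{2}\int_A \sigma \to +\infty$ as $c \to \infty$, completing the proof.
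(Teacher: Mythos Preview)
Your argument is correct and follows the same path as the paper: the pointwise Donsker--Varadhan inequality for the upper bound, the natural extremizer $\phi=\tfrac12\log(W^g/\nu(g))$ for the lower bound with a regularization to deal with zeros of $W^g$, and a $g$-independent test function for the case $W^G\notin\mathcal W^G_{00}$. The only cosmetic difference is that you regularize via the mixture $V^g_\varepsilon=(1-\varepsilon)W^g+\varepsilon\nu(g)$ whereas the paper simply truncates $\phi$; your choice makes the log-MGF term collapse to zero and the dominated-convergence step slightly cleaner, but the two are equivalent in spirit.
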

\begin{proof}

First, assume that $W^G\in \mathcal{W}^G_{00}$. 

The next lemma is an easy exercise in calculus.
\begin{lemma}
Let $(w_g)_{g\in G}$ be positive reals such that $\sum_{g\in G}w_g=1$, then
\[\sup_{(x_g)\in \mathbb{R}^{G}} \left(\sum_{g\in G}w_gx_g-\frac{1}2\log \left(\sum_{g\in G}\nu(g)\exp\left(2x_g\right)\right)\right)=\frac{1}2 \sum_{g\in G}w_g\log \frac{w_g}{\nu(g)},\]
and the maximum attained on the set of vectors $\{(x_{C,g})_{g\in G}\,:\,C\in \mathbb{R}\}$, where 
\[x_{C,g}=\frac{1}2\log\left(\frac{w_g}{\nu(g)}\right)+C.\]
\end{lemma}

In fact, the lemma above remains true if we only assume that $w_g$ is nonnegative provided the we interpret $\log(0)$ as $-\infty$, $0\cdot \log(0)$ as $0$, and $\exp(-\infty)$ as $0$.

Therefore it follows easily that
\[\bar{\Lambda}^*_{\nu}(W^G)\le \frac{1}2\int_{[0,1]^2} \sum_{g\in G}W^g(x,y)\log \frac{W^g(x,y)}{\nu(g)} dxdy=I_\nu(W^G).\]

One can define $\phi:S_G\to \mathbb{R}$ by
\begin{equation}\label{phidef}\phi(x,y,g)=\frac{1}2\log \left(\frac{W^g(x,y)}{\nu(g)}\right).
\end{equation}
Since $W^G$ is symmetric and $\nu(g)=\nu(-g)$ for all $g\in G$, we see that $\phi$ is also symmetric. Thus, if $\phi\in L_1(S_g)$, then it follows easily that $\bar{\Lambda}^*_{\nu}= I_\nu(W^G).$

If $\phi\notin L_1(S_g)$, then $\bar{\Lambda}^*_{\nu}= I_\nu(W^G)$ is still true, because we can take a sequence of truncations of $\phi$.

If $W^G\notin \mathcal{W}^G_{00}$, then $I_{\nu}(W^G)=\infty$. For example, let us assume that the set
\[B=\left\{(x,y)\in [0,1]^2\,:\, \sum_{g\in G} W^g(x,y)>1\right\}\]
has positive measure. Let $\phi_C(x,y,g)=\mathbbm{1}((x,y)\in B)\cdot C$. Then
\begin{align*}
\lim_{C\to\infty} &\left(Z_{\phi_C}(W^G)-\frac{1}2\int_{[0,1]^2} \log \left(\sum_{g\in G}\nu(g)\exp\left(2\phi_C(x,y,g)\right)\right)dxdy\right)\\
&=\lim_{C\to\infty} C\int_B \left( \sum_{g\in G} W^g(x,y)-1\right)\, dxdy=\infty,
\end{align*}
which shows that $\bar{\Lambda}^*_{\nu}(W^G)=\infty$. 

The other cases are left to the reader.
\end{proof}

Let
\[\mathcal{W}^G_{00<}=\{W^G\in\mathcal{W}^G_{00}\,:\inf W^G>0\}.\]

\begin{lemma}
Assume that $W^G\in\mathcal{W}^G_{00<}$. Then $W^G$ is an exposed point of $\bar{\Lambda}^*_{\nu}$.
\end{lemma}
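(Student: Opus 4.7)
The plan is to produce the exposing hyperplane explicitly, taking a cue from the proof of Lemma~\ref{lambdanu}. There, the pointwise supremum defining $\bar{\Lambda}^*_\nu(W^G)$ was attained at
\[\phi(x,y,g)=\frac{1}2\log\frac{W^g(x,y)}{\nu(g)},\]
the freedom in the additive constant being absorbed by the constraint $\sum_g W^g=1$ a.e. I will use this $\phi$ as the candidate: the exposing hyperplane will be $Z_\phi\in \mathcal{H}=(\mathcal{W}^G)^*$.

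First I would check that $\phi\in L^1_{\sym}(S_G)$. Symmetry is immediate from $W^g(x,y)=W^{-g}(y,x)$ and $\nu(g)=\nu(-g)$. Integrability is in fact boundedness, and this is the one place where the hypothesis $\inf W^G>0$ is used: together with $W^G\leq 1$ it yields $\phi\in L^\infty(S_G)\subset L^1(S_G)$.

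Next I would establish the exposing inequality
\[\bar{\Lambda}^*_\nu(V^G)-Z_\phi(V^G)>\bar{\Lambda}^*_\nu(W^G)-Z_\phi(W^G)\]
for every $V^G\in\mathcal{W}^G_0$ distinct from $W^G$ in $L^\infty(S_G)$. The right-hand side evaluates to zero for this choice of $\phi$: since $\sum_{g\in G}\nu(g)e^{2\phi(x,y,g)}=\sum_g W^g(x,y)=1$ a.e., Lemma~\ref{explimit} gives $\bar{\Lambda}(Z_\phi)=0$, while $Z_\phi(W^G)=I_\nu(W^G)=\bar{\Lambda}^*_\nu(W^G)$ by direct substitution. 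On the left, if $V^G\in\mathcal{W}^G_{00}$, using the explicit formula for $I_\nu$ collapses the difference into
\[\frac{1}{2}\int_{[0,1]^2} \dkl\!\left((V^g(x,y))_{g\in G}\,\Big\|\,(W^g(x,y))_{g\in G}\right) dxdy,\]
which is $\geq 0$ by Gibbs' inequality and strictly positive unless $V^g=W^g$ a.e.\ for every $g\in G$. If $V^G\notin \mathcal{W}^G_{00}$, then $\bar{\Lambda}^*_\nu(V^G)=\infty$ whereas $Z_\phi(V^G)$ is finite, so the inequality is automatic.

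There is no serious analytic obstacle; the main thing to take care of is the bookkeeping around the meaning of \textquotedblleft $V^G\neq W^G$\textquotedblright: points of $\mathcal{W}^G\subset L^\infty(S_G)$ are equivalence classes of a.e.\ equal functions, and Gibbs' inequality delivers strict positivity of the integrated Kullback-Leibler divergence on exactly this equivalence relation, matching the notion of distinctness used in Theorem~\ref{GEthmb}.
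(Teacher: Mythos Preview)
Your proof is correct and follows essentially the same route as the paper: both choose the exposing functional $\phi(x,y,g)=\tfrac12\log(W^g(x,y)/\nu(g))$, use $W^G\in\mathcal{W}^G_{00<}$ to ensure $\phi\in L^1_{\sym}(S_G)$, and reduce the strict inequality to positivity of the integrated Kullback--Leibler divergence via Gibbs' inequality. The only cosmetic difference is that you separately evaluate the right-hand side to zero, while the paper computes the difference directly.
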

\begin{proof}
Let us define $\phi$ as in \eqref{phidef}. Since $W^G\in \mathcal{W}^G_{00<}$, we have $\phi\in L^1_{\sym}(S_G)$. Let $W^G\neq U^G\in \mathcal{W}_{00}^G$, then using Lemma~\ref{lambdanu}, we see that
\begin{align*}
\left(Z_{\phi}(W^G)-\bar{\Lambda}^*_{\nu}(W^G)\right)&-\left(Z_{\phi}(U^G)-\bar{\Lambda}^*_{\nu}(U^G)\right)\\
&=\frac{1}2 \int_{[0,1]^2} \sum_{g\in G} U^g(x,y) \log\left(\frac{U^g(x,y)}{W^g(x,y)}\right)dxdy\\&=\frac{1}2\int_{[0,1]^2}\dkl\left((U^g(x,y))_{g\in G}\, \Big|\Big|\, (W^g(x,y))_{g\in G}\right)dxdy\\&> 0,
 \end{align*}
 where $\dkl$ denotes the Kullback–Leibler divergence and the last inequality follows from Gibbs' inequality. If $U^G\notin \mathcal{W}_{00}^G$, then 
 \[\left(Z_{\phi}(W^G)-\bar{\Lambda}^*_{\nu}(W^G)\right)-\left(Z_{\phi}(U^G)-\bar{\Lambda}^*_{\nu}(U^G)\right)>0\]
 obviously holds, since $\bar{\Lambda}^*_{\nu}(U^G)=+\infty$.
\end{proof}

\begin{lemma}\label{lemmainfrestricted}
Let $E$ be a weakly open subset of $\mathcal{W}_0^G$, then
\[\inf_{W^G\in E} \bar{\Lambda}^*_{\nu}(W^G)=\inf_{W^G\in E\cap \mathcal{W}_{00<}} \bar{\Lambda}^*_{\nu}(W^G).\]
\end{lemma}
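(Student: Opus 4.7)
The direction $\inf_{E} \bar{\Lambda}^*_\nu \le \inf_{E\cap\mathcal{W}_{00<}^G} \bar{\Lambda}^*_\nu$ is immediate from set inclusion, so the content lies in the reverse inequality. If the left side equals $+\infty$ the claim is vacuous, so we may assume it is finite, which means there exist $W^G \in E$ with $\bar{\Lambda}^*_\nu(W^G) = I_\nu(W^G) < \infty$, and hence $W^G \in \mathcal{W}^G_{00}$ by Lemma~\ref{lambdanu} and the definition of $I_\nu$.

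The plan is to approximate such a $W^G$ by mixing with the uniform distribution on $G$. Explicitly, for $t \in (0,1)$ define
\[
U^G_t = (1-t)W^G + \frac{t}{|G|}\mathbbm{1},
\]
where the second term denotes the constant cochain graphon equal to $1/|G|$ on all of $S_G$. I would first verify that $U^G_t \in \mathcal{W}^G_{00<}$: symmetry and the bound $U^g_t \ge t/|G|>0$ are immediate, while $\sum_{g\in G} U^g_t = (1-t)\cdot 1 + t\cdot 1 = 1$ almost everywhere since $W^G \in \mathcal{W}^G_{00}$.

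Next I would check that $U^G_t \in E$ for all sufficiently small $t$. For every $\phi \in L^1_{\sym}(S_G)$, the map $t \mapsto Z_\phi(U^G_t)$ is affine in $t$ and thus continuous, so $U^G_t \to W^G$ in the weak topology on $\mathcal{W}^G$ as $t \to 0$. Since $E$ is weakly open, $U^G_t \in E$ once $t$ is small enough.

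The main quantitative step is to control $I_\nu(U^G_t)$. Since the Kullback–Leibler divergence is jointly convex (in particular convex in the first argument with fixed second argument), for almost every $(x,y)$,
\[
\sum_{g\in G} U^g_t(x,y) \log \frac{U^g_t(x,y)}{\nu(g)} \le (1-t)\sum_{g\in G} W^g(x,y)\log\frac{W^g(x,y)}{\nu(g)} + t \sum_{g\in G}\frac{1}{|G|}\log\frac{1/|G|}{\nu(g)}.
\]
The second sum is a finite constant $C_\nu$ depending only on $\nu$, so integrating gives
\[
I_\nu(U^G_t) \le (1-t) I_\nu(W^G) + \tfrac{t}{2}C_\nu.
\]
Combining this with the weak convergence $U^G_t \to W^G$, for any $\varepsilon>0$ one can pick $t$ small enough that $U^G_t \in E \cap \mathcal{W}^G_{00<}$ and $\bar{\Lambda}^*_\nu(U^G_t) = I_\nu(U^G_t) \le I_\nu(W^G) + \varepsilon = \bar{\Lambda}^*_\nu(W^G) + \varepsilon$. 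Taking the infimum over $W^G \in E$ and letting $\varepsilon \to 0$ yields the desired inequality. The argument is essentially a convexity-plus-continuity perturbation; the only mild subtlety is that the approximation must happen in the weak topology (rather than the cut norm), which is handled for free by the linearity of $Z_\phi$ in the mixing parameter.
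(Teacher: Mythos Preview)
Your proof is correct and follows essentially the same approach as the paper's: both mix $W^G$ with the uniform cochain graphon to obtain a strictly positive approximant in $\mathcal{W}^G_{00<}$, verify that it remains in the weakly open set $E$, and check that the rate function does not jump up in the limit. Your convexity bound $I_\nu(U_t^G)\le (1-t)I_\nu(W^G)+\tfrac{t}{2}C_\nu$ is in fact a cleaner justification of this last step than the paper's, which simply asserts that $\lim_{t\to 1}\bar{\Lambda}^*_\nu(W_t^G)=\bar{\Lambda}^*_\nu(W^G)$ is ``straightforward.''
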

\begin{proof}
It is enough to prove that for all $W^G\in E$, we have $\bar{\Lambda}^*_{\nu}(W^G)\ge \inf_{W^G\in E\cap \mathcal{W}_{00<}} \bar{\Lambda}^*_{\nu}(W^G)$. This is clear if $W^G\notin \mathcal{W}_{00}^G$, so we may assume that $W^G\in \mathcal{W}_{00}^G$. For $t\in [0,1]$, let us define $W_t^G\in \mathcal{W}^G_{00}$ by setting
\[W_t^g(x,y)=tW^g(x,y)+(1-t)\frac{1}{|G|}.\]
Note that for $0\le t<1$, we have $W_t^G\in \mathcal{W}^G_{00<}$. Also, $\lim_{t\to 1} W^G_t=W^G$ in $L^{\infty}$ so also in the weak topology. Thus, since $E$ is open, we have a $0\le t_0<1$ such that $W^t\in E\cap \mathcal{W}_{00<}^G$ for all $t_0<t<1$. It is also straightforward to prove that
\[\lim_{t\to 1}\bar{\Lambda}^*_{\nu}(W_t^G)=\bar{\Lambda}^*_{\nu}(W^G),\]
so the statement follows.
\end{proof}

Now we are able to prove the following weak large deviation principle.
\begin{lemma}\label{LweakLDP}
Let $\nu$ be a symmetric non-degenerate probability distribution on $G$. Consider the random cochain $F_{n,\nu}$, and the corresponding random cochain graphon $R_n^G=W^G_{F_{n,\nu}}$. 
\begin{enumerate}[(a)]
\item For any weakly compact subset $F$ of $\mathcal{W}_0^G$, we have
 \[\limsup_{n\to\infty} \frac{1}{n^2}\log \mathbb{P}(R_n^G\in F)\le -\inf_{{W}^G\in {F}} I_\nu({W}^G).\]
\item For any weakly open subset $E$ of $\mathcal{W}_0^G$, we have
 \[\liminf_{n\to\infty} \frac{1}{n^2}\log \mathbb{P}(R_n^G\in E)\ge -\inf_{{W}^G\in {E}} I_\nu({W}^G).\]
 \end{enumerate}
\end{lemma}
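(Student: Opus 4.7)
The plan is to apply the abstract Gärtner-Ellis theorems (Theorem \ref{GEthm} for (a) and Theorem \ref{GEthmb} for (b)) to the random variables $R_n^G$ viewed as taking values in $\mathcal{X} = \mathcal{W}^G$ endowed with the weak topology, with speed $\alpha(n) = n^2$. All the necessary ingredients have been assembled in the preceding subsection, so the argument is mostly a matter of checking the hypotheses and tying the pieces together.

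First I would verify the setup common to both parts. By Lemma \ref{topology}(c), the continuous dual $(\mathcal{W}^G)^*$ is exactly $\mathcal{H} = \{Z_\phi : \phi \in L^1_{\sym}(S_G)\}$, so the abstract framework is applicable. By Lemma \ref{explimit}, for every $\phi \in L^1_{\sym}(S_G)$ the limit
\[\bar{\Lambda}(Z_\phi) = \lim_{n\to\infty} \frac{1}{n^2} \log \mathbb{E} \exp\bigl(n^2 Z_\phi(R_n^G)\bigr) = \frac{1}{2}\int_{[0,1]^2} \log\!\Bigl(\sum_{g\in G} \nu(g) e^{2\phi(x,y,g)}\Bigr)\, dxdy\]
exists and is finite. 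Combining this with Lemma \ref{lambdanu} identifies the Fenchel-Legendre transform $\bar{\Lambda}^*_\nu$ with the rate function $I_\nu$ on $\mathcal{W}_0^G$.

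For part (a), the weakly compact subset $F \subseteq \mathcal{W}_0^G$ satisfies the hypothesis of Theorem \ref{GEthm} directly, yielding
\[\limsup_{n\to\infty} \frac{1}{n^2} \log \mathbb{P}(R_n^G \in F) \le -\inf_{W^G \in F} \bar{\Lambda}^*_\nu(W^G) = -\inf_{W^G \in F} I_\nu(W^G).\]
For part (b), I apply Theorem \ref{GEthmb} with $K = \mathcal{W}_0^G$, which by Lemma \ref{topology}(a) is weakly compact and contains every $R_n^G$; the required limit of log-moment generating functions exists and is finite for each $\phi$ by the previous paragraph. Let $\mathcal{F}$ be the set of exposed points of $\bar{\Lambda}^*_\nu$. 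By the lemma immediately preceding Lemma \ref{lemmainfrestricted}, every $W^G \in \mathcal{W}^G_{00<}$ is exposed, so $\mathcal{W}^G_{00<} \subseteq \mathcal{F}$. Using Lemma \ref{lemmainfrestricted} (with the identification $\bar{\Lambda}^*_\nu = I_\nu$ from Lemma \ref{lambdanu}),
\[\inf_{W^G \in E \cap \mathcal{F}} \bar{\Lambda}^*_\nu(W^G) \le \inf_{W^G \in E \cap \mathcal{W}^G_{00<}} I_\nu(W^G) = \inf_{W^G \in E} I_\nu(W^G),\]
and Theorem \ref{GEthmb} then gives the desired lower bound.

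The content of the proof was really carried by the earlier lemmas: Lemma \ref{explimit} supplies the explicit log-moment generating function via the independence of cochain values on distinct edges, Lemma \ref{lambdanu} identifies the Legendre transform with $I_\nu$ through Gibbs' inequality, and Lemma \ref{lemmainfrestricted} allows the infimum to be restricted to exposed points via a convex combination with the uniform cochain graphon $\frac{1}{|G|}$. Consequently there is no serious obstacle left, and the main statement follows by bookkeeping from the Gärtner-Ellis framework.
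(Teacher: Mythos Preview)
Your proposal is correct and follows essentially the same approach as the paper: for part (a) you combine Lemma~\ref{lambdanu} with Theorem~\ref{GEthm}, and for part (b) you combine Lemma~\ref{explimit}, Lemma~\ref{lambdanu}, the exposed-points lemma, and Lemma~\ref{lemmainfrestricted} with Theorem~\ref{GEthmb}. You have in fact spelled out the hypothesis-checking (the identification $(\mathcal{W}^G)^*=\mathcal{H}$ via Lemma~\ref{topology}(c), and the choice $K=\mathcal{W}_0^G$ via Lemma~\ref{topology}(a)) in more detail than the paper does.
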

\begin{proof}
The upper bound in part (a) follows by combining Lemma~\ref{lambdanu} with the G\"artner-Ellis Theorem (Theorem~\ref{GEthm}).

The lower bound in part (b) follows by combining Lemma~\ref{explimit}, Lemma~\ref{lambdanu}, Lemma~\ref{lemmainfrestricted} with the G\"artner-Ellis Theorem (Theorem~\ref{GEthmb}).
\end{proof}

In the next two lemmas we discuss the continuity properties of the function $I_{\nu}$.

\begin{lemma}
$I_{\nu}$ is a lower semicontinuous function on $\mathcal{W}_0$ with respect to the cut norm. 
\end{lemma}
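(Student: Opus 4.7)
The plan is to leverage Lemma~\ref{lambdanu}, which identifies $I_\nu$ with $\bar{\Lambda}^*_\nu$, the latter being defined as a supremum of affine functionals in $W^G$. Explicitly, for each $\phi \in L^1_{\sym}(S_G)$, set
\[
J_\phi(W^G) = Z_\phi(W^G) - \frac{1}{2}\int_{[0,1]^2} \log\left(\sum_{g\in G} \nu(g)\exp\bigl(2\phi(x,y,g)\bigr)\right) dxdy.
\]
The second term is a constant (depending only on $\phi$), which is finite since $\nu$ is non-degenerate (hence the integrand is bounded below by $\log\min_g \nu(g)$) and since $\phi\in L^1_{\sym}(S_G)$ (hence bounded above by $2\sum_{g}|\phi(\cdot,\cdot,g)|\in L^1([0,1]^2)$). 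So by Lemma~\ref{lambdanu},
\[
I_\nu(W^G) \;=\; \sup_{\phi\in L^1_{\sym}(S_G)} J_\phi(W^G).
\]

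Next I would show that each $J_\phi$ is continuous on $\mathcal{W}^G_0$ with respect to the cut norm. The constant term is irrelevant, and the map $W^G \mapsto Z_\phi(W^G)$ is exactly a functional in $\mathcal{H}$, which by Lemma~\ref{Lemmacoarser} (an immediate consequence of Lemma~\ref{lemmaangleWG}) is continuous on $\mathcal{W}^G_0$ with respect to the cut norm. Therefore $I_\nu$ is the pointwise supremum of a family of cut-norm continuous functions on $\mathcal{W}^G_0$, and any such supremum is automatically lower semicontinuous.

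There is no real obstacle here; the only thing to double-check is that the supremum representation from Lemma~\ref{lambdanu} is valid on all of $\mathcal{W}^G_0$, including the case $W^G\notin \mathcal{W}^G_{00}$ where $I_\nu(W^G)=\infty$. But this is precisely the content of that lemma: when $W^G$ fails to lie in $\mathcal{W}^G_{00}$, one exhibits a one-parameter family $\phi_C$ for which $J_{\phi_C}(W^G)\to\infty$, matching the definition. Hence the supremum identity holds unconditionally on $\mathcal{W}^G_0$, and lower semicontinuity with respect to the cut norm follows from the paragraph above.
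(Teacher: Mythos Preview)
Your argument is correct and essentially identical to the paper's own proof: both invoke Lemma~\ref{lambdanu} to write $I_\nu$ as a supremum of the functionals $J_\phi$, use Lemma~\ref{Lemmacoarser} to see each $J_\phi$ is cut-norm continuous on $\mathcal{W}_0^G$, and conclude lower semicontinuity from the general fact that a supremum of continuous functions is lower semicontinuous. You simply add a bit more detail on finiteness of the constant term and on the $W^G\notin\mathcal{W}_{00}^G$ case.
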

\begin{proof}

For $\phi\in L^1_{\sym} (S_G)$, the function \[Z_\phi(W^G)-\frac{1}2\int_{[0,1]^2} \log \left(\sum_{g\in G}\nu(g)\exp\left(2\phi(x,y,g)\right)\right)dxdy\] is a continuous function on $\mathcal{W}_0$ with respect to the cut norm by Lemma~\ref{Lemmacoarser}. The statement follows from Lemma~\ref{lambdanu} and the fact that the supremum of continuous functions is lower semicontinuous.
\end{proof}

\begin{lemma}\label{lowersemi}
The function $I_{\nu}$ is a well-defined lower semicontinuous function on $\widetilde{\mathcal{W}}^G_0$.
\end{lemma}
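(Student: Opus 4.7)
The plan is to mimic the argument used in Lemma~\ref{semic} to descend the cut-norm lower semicontinuity of $I_\nu$ on $\mathcal{W}_0^G$ (established in the preceding lemma) to the quotient space $\widetilde{\mathcal{W}}^G_0$. The essential observation is that $I_\nu$ is a priori invariant under the action of $S_{[0,1]}$, since for any $\varphi\in S_{[0,1]}$ the map $(x,y)\mapsto (\varphi(x),\varphi(y))$ preserves the product measure on $[0,1]^2$, so the defining integral in \eqref{ratefunctiondef} satisfies $I_\nu((W^G)^\varphi)=I_\nu(W^G)$ (and the membership of $W^G$ in $\mathcal{W}^G_{00}$ is clearly preserved by the action as well).

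First, I would show that $I_\nu$ is well-defined on $\widetilde{\mathcal{W}}^G_0$. Suppose $V^G$ and $W^G$ are cochain graphons with $\widetilde{V}^G=\widetilde{W}^G$, i.e.\ $\delta_\square(V^G,W^G)=0$. Choose $\varphi_n\in S_{[0,1]}$ with $\|V^G-(W^G)^{\varphi_n}\|_\square\to 0$. By the invariance above, $I_\nu((W^G)^{\varphi_n})=I_\nu(W^G)$ for every $n$, and by the cut-norm lower semicontinuity proved in the previous lemma,
\[I_\nu(V^G)\le \liminf_{n\to\infty} I_\nu((W^G)^{\varphi_n})=I_\nu(W^G).\]
Since $\widetilde{W}^G=\widetilde{V}^G$ as well, the symmetric argument gives $I_\nu(W^G)\le I_\nu(V^G)$, hence $I_\nu(V^G)=I_\nu(W^G)$.

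For lower semicontinuity, suppose $\widetilde{W}^G_n\to \widetilde{W}^G$ in $(\widetilde{\mathcal{W}}^G_0,\delta_\square)$, and pick representatives $W^G_n$ and $W^G$. For each $n$, the definition of $\delta_\square$ lets me choose $\varphi_n\in S_{[0,1]}$ with
\[\|W^G-(W^G_n)^{\varphi_n}\|_\square\le \delta_\square(\widetilde{W}^G_n,\widetilde{W}^G)+\tfrac{1}{n},\]
so $(W^G_n)^{\varphi_n}\to W^G$ in cut norm. Since $I_\nu((W^G_n)^{\varphi_n})=I_\nu(W^G_n)$ by invariance, the cut-norm lower semicontinuity yields
\[I_\nu(\widetilde{W}^G)=I_\nu(W^G)\le \liminf_{n\to\infty} I_\nu((W^G_n)^{\varphi_n})=\liminf_{n\to\infty} I_\nu(\widetilde{W}^G_n),\]
which is exactly the asserted lower semicontinuity. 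There is no real obstacle here: the entire content is the trivial invariance of the integral defining $I_\nu$ under relabelings together with the preceding lemma, and the only care needed is to select the relabelings $\varphi_n$ correctly so that the comparison is made with a single representative of $\widetilde{W}^G$.
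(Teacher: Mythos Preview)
Your proof is correct and is exactly the argument the paper has in mind: the paper's one-line proof says the statement ``can be proved the same way as Lemma~\ref{semic0}'' (this is almost certainly a typo for Lemma~\ref{semic}), and what you have written is precisely the proof of Lemma~\ref{semic} with $b$ replaced by $I_\nu$ and upper semicontinuity replaced by lower semicontinuity. The only content beyond that template is the $S_{[0,1]}$-invariance of $I_\nu$, which you correctly justify via the change of variables $(x,y)\mapsto(\varphi(x),\varphi(y))$.
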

\begin{proof}
This statement can be proved the same way as Lemma~\ref{semic0}.
\end{proof}

\subsection{Strong large deviation principle -- The proof of Theorem~\ref{largediv0}}
Since $\widetilde{W}^G_{0}$ is compact by Lemma~\ref{lemmacompact}, we see that $\widetilde{F}$ is not only closed, but it is also compact.

For any $\eta>0$, let us define
\[B_\square(\widetilde{F},\eta)=\{\widetilde{W}^G\in \widetilde{\mathcal{W}}^G_0\,:\,\delta_\square(\widetilde{W}^G,\widetilde{U}^G)\le \eta \text{ for some }\widetilde{U}^G\in \widetilde{F}\}.\]

Let $\varepsilon>0$. Recall that $\widetilde{F}$ is compact and $I_{\nu}$ is lower semicontinuous by Lemma~\ref{lowersemi}. Thus, there is an $\eta>0$ such that
\[\inf_{\widetilde{W}^G\in B_\square(\widetilde{F},6\eta)} I_{\nu}(\widetilde{W}^G) \ge \inf_{\widetilde{W}^G\in \widetilde{F}} I_{\nu}(\widetilde{W}^G)-\varepsilon.\]

By Lemma~\ref{stepregularity} there is a $k$ such that, for all $n$, there is a partition $\mathcal{P}$ of $[n]$ with at most $k$ parts such that
\[\|S_{\mathcal{P}_{/n}} R_n^G-R_n^G\|_\square\le \eta.\]
Of course, the choice $\mathcal{P}$ might depend on $R_n$. There are at most $k^n$ partitions of $[n]$ with at most $k$ parts. Thus, there is a deterministic partition $\mathcal{Q}^{(n)}$ of $[n]$ with $\ell_n$ parts such that $\ell_n\le k$ and 
\[\mathbb{P}\left(\widetilde{R}_n^G\in \widetilde{F}\text{ and }\|S_{\mathcal{Q}^{(n)}_{/n}} R_n^G-R_n^G\|_\square\le {\eta}\right)\ge \frac{\mathbb{P}(\widetilde{R}_n^G\in \widetilde{F})}{k^n}.\]
By passing to a subsequence, we may assume that $\ell_n=\ell$ for some $\ell$. Let $s_1^{(n)},s_2^{(n)},\dots,s_\ell^{(n)}$ be the sizes of the parts of $\mathcal{Q}^{(n)}$, and let $t^{(n)}_i=\sum_{j=1}^i s_j^{(n)}$, and let us define the following partition of $[n]$:

\[\mathcal{T}^{(n)}=\{\{t_{i-1}^{(n)}+1,t_{i-1}^{(n)}+2,\dots,t_i^{(n)}\}\,:\,1\le i\le \ell\}.\]

By the distributional symmetry of $R_n$ given in Lemma~\ref{distsym}, we have
\begin{align*}\mathbb{P}&\left(\widetilde{R}_n^G\in \widetilde{F}\text{ and }\|S_{\mathcal{T}^{(n)}_{/n}} R_n^G-R_n^G\|_\square\le {\eta}\right)\\&\qquad=\mathbb{P}\left(\widetilde{R}_n^G\in \widetilde{F}\text{ and }\|S_{\mathcal{Q}^{(n)}_{/n}} R_n^G-R_n^G\|_\square\le {\eta}\right)\\&\qquad\ge \frac{\mathbb{P}(\widetilde{R}_n^G\in \widetilde{F})}{k^n}.
\end{align*}

By passing to a subsequence, we may assume that for all $0\le i\le \ell$, we have 
\[\lim_{n\to\infty} \frac{t_i^{(n)}}n=t_i.\]

Given a function $M:[\ell]^2\times G\to [0,1]$, let us define $W^G_{M,n},W^G_M:S_G\to [0,1]$ by setting
\[W^G_{M,n}(x,y,g)=M(i,j,g)\text{ for all }(x,y)\in \left[\frac{t_{i-1}^{(n)}}n,\frac{t_{i}^{(n)}}n\right)\times \left[\frac{t_{j-1}^{(n)}}n,\frac{t_{j}^{(n)}}n\right),\]

and

\[W^G_{M}(x,y,g)=M(i,j,g)\text{ for all }(x,y)\in \left[t_{i-1},t_i\right)\times \left[t_{j-1},t_j\right).\]

Let $\mathcal{M}$ be the set of all functions from $[\ell]^2\times G$ to $\{\eta,2\eta,\dots,\lfloor \eta^{-1}\rfloor \eta\}$ satisfying the symmetry constraint that
\[M(i,j,g)=M(j,i,-g)\text{ for all }(i,j,g)\in [\ell]^2\times G.\]

The set
\[\cup_{M\in \mathcal{M}} B_\square(W^G_{M,n},\eta)\]
contains all $\mathcal{T}^{(n)}_{/n}$ measurable cochain graphons. Thus, by the pigeonhole principle, there is an $M_n\in \mathcal{M}$ such that

\[\mathbb{P}\left(\widetilde{R}_n^G\in \widetilde{F},\quad \|S_{\mathcal{T}^{(n)}_{/n}} R_n^G-R_n^G\|_\square\le {\eta}\text{ and }S_{\mathcal{T}^{(n)}_{/n}} R_n^G\in B_\square(W^G_{M_n,n},\eta)\right)\ge \frac{\mathbb{P}(\widetilde{R}_n^G\in \widetilde{F})}{k^n|\mathcal{M}|}.\]

By passing to a subsequence, we may assume that $M_n=M$ for all $n$. It is easy to see that for all large enough $n$, we have
\[\|W^G_{M,n}-W^G_M\|_\square\le \|W^G_{M,n}-W^G_M\|_1\le \eta.\]

Therefore, assuming that $n$ is large enough, on the event $\|S_{\mathcal{T}^{(n)}_{/n}} R_n^G-R_n^G\|_\square\le {\eta}$ and $S_{\mathcal{T}^{(n)}_{/n}} R_n^G\in B_\square (W^G_{M,n},\eta)$, 
we have
\begin{align*}
 \|R_n^G-W^G_{M}\|_\square\le \|R_n^G-S_{\mathcal{T}^{(n)}_{/n}} R_n^G\|_\square+ \|S_{\mathcal{T}^{(n)}_{/n}} R_n^G-W^G_{M,n}\|_\square+\|W^G_{M,n}-W^G_M\|_\square\le 3\eta.
\end{align*}
Thus, for all large enough $n$, we have
\begin{align*}\mathbb{P}&\left(\widetilde{R}_n^G\in \widetilde{F}\text{ and }\|R_n^G-W^G_{M}\|_\square\le 3\eta\right)\\&\ge \mathbb{P}\left(\widetilde{R}_n^G\in \widetilde{F},\quad \|S_{\mathcal{T}^{(n)}_{/n}} R_n^G-R_n^G\|_\square\le {\eta}\text{ and }S_{\mathcal{T}^{(n)}_{/n}} R_n^G\in B_\square(W^G_{M_n,n},\eta)\right)\\&\ge \frac{\mathbb{P}(\widetilde{R}_n^G\in \widetilde{F})}{k^n|\mathcal{M}|}.\end{align*}

Clearly, $\lim_{n\to\infty} \frac{1}{n^2} \log(k^n|\mathcal{M}|)=0$, thus
\begin{align}\limsup_{n\to\infty}\frac{1}{n^2}\log\mathbb{P}\left(R_n^G\in B_\square(W^G_M,3\eta) \right)&\ge \limsup_{n\to\infty}\frac{1}{n^2}\log\mathbb{P}\left(\widetilde{R}_n^G\in \widetilde{F}\text{ and }\|R_n^G-W^G_{M}\|_\square\le 3\eta\right)\nonumber\\&\ge \limsup_{n\to\infty}\frac{1}{n^2}\log\mathbb{P}\left(\widetilde{R}_n^G\in \widetilde{F}\right). \label{eq11} \end{align}

Clearly, we may assume that $\mathbb{P}\left(\widetilde{R}_n^G\in \widetilde{F}\text{ and }\|R_n^G-W^G_{M}\|_\square\le 3\eta\right)>0$ for some $n$. In that case $\widetilde{W}_M^G\in B_\square(\widetilde{F},3\eta)$. Thus, for any $W^G\in B_\square(W^G_M,3\eta)$, we have $\widetilde{W}^G\in B_\square(\widetilde{F},6\eta)$, and so $I_{\nu}(W^G)\ge \inf_{\widetilde{W}^G\in \widetilde{F}} I_{\nu}(\widetilde{W}^G)-\varepsilon$. Note that $B_\square(W^G_M,3\eta)$ is weakly compact by Lemma~\ref{topology}. Thus, we can combine our earlier observation with the weak large deviation principle given in Lemma~\ref{LweakLDP},

\[\limsup_{n\to\infty}\frac{1}{n^2}\log\mathbb{P}\left(R_n^G\in B_\square(W^G_M,3\eta) \right)\le -\inf_{W^G\in B_\square(W^G_M,3\eta)}I_{\nu}(W^G)\le -\inf_{\widetilde{W}^G\in \widetilde{F}} I_{\nu}(\widetilde{W}^G)+\varepsilon.\]

Combining this with \eqref{eq11}, part (a) of Theorem~\ref{largediv0} follows. Part (b) can be proved along the lines of the proof of the lower bound in \cite{chatterjee2011large} with minimal technical changes, so we omit the details. 

\bibliography{references}
\bibliographystyle{plain}

\bigskip

\noindent Andr\'as M\'esz\'aros, \\
{\tt meszaros@renyi.hu}\\
HUN-REN Alfr\'ed R\'enyi Institute of Mathematics,\\
Budapest, Hungary

\end{document}